\definecolor{linkcolour}{rgb}{0,0.2,0.6}
\newtheoremstyle{mytheoremstyle} 
        {\topsep}                    
        {\topsep}                    
        {\itshape\fontfamily{ppl}\selectfont}                   
        {}                           
        {\fontfamily{ppl}\selectfont\bfseries\color{black}}                   
        {.}                          
        {.5em}                       
        {}  
\theoremstyle{mytheoremstyle}
\newtheorem{theo}{Theorem}[section]
\newtheorem{prop}[theo]{Proposition}
\newtheorem{coro}[theo]{Corollary}
\newtheorem{lemm}[theo]{Lemma}
\newtheorem{Fact}[theo]{Fact}
\renewenvironment{proof}[1][\proofname]{\par
  \pushQED{\qed}%
  \fontfamily{ppl} \topsep6\p@\@plus6\p@\relax
  \trivlist
  \item[\hskip\labelsep\itshape\bfseries#1\@addpunct{.}]\ignorespaces}{%
  \popQED\endtrivlist\@endpefalse
}
\def\R{\mathbb{R}}
\def \N{\mathbb{N}}
\def \P{\mathbb{P}} 
\def \E{\mathbb{E}} 
\newcommand{ \un }{\mathds{1}}
\def\T{\mathbb{T}}
\newenvironment{merci}{\textbf{Acknowledgments.}}{ }
\newtheorem{Ass}{Assumption}
\newtheorem{remark}{Remark}
\renewcommand{\T}{\mathbb{T}}
\newcommand{\X}{\mathbb{X}}
\def \Eb{\mathbf{E}}
\def \Pb{\mathbf{P}}
\renewcommand{\P}{\mathbb{P}}
\newtheorem{postita}{Post-it}
\renewcommand{\P}{\mathbb{P}}
\def\T{\mathbb{T}}
\newcommand*\bigcdot{\mathpalette\bigcdot@{.5}}
\newcommand*\bigcdot@[2]{\mathbin{\vcenter{\hbox{\scalebox{#2}{$\m@th#1\bullet$}}}}}
\newcommand{\VectCoord}[2]{#1^{(#2)}}
\newcommand{\tn}{\lfloor tn\rfloor}
\newcommand{\sn}{\lfloor sn\rfloor}
\newcommand{\rn}{\lfloor rn\rfloor}
\newcommand{\CardRoots}{\Bar{B}^{(n)}}
\DeclareMathAlphabet\mathbfcal{OMS}{cmsy}{b}{n}
\begin{document}

{\fontfamily{ppl}\selectfont

\begin{frontmatter}


\title{Local times in critical generations of a random walk in random environment on trees}

\author{\fnms{Alexis} \snm{Kagan}\ead[label=e2]{alexis.kagan@auckland.ac.nz}}
\address{Department of Statistics, University of Auckland, New Zealand. \printead{e2}} \vspace{0.5cm}

\runauthor{Kagan}


\runtitle{Local times in critical generations of a random walk in random environment on trees}

\begin{abstract}
We consider a null-recurrent randomly biased walk $\X$ on a Galton-Watson tree in the (sub)-diffusive regime and we prove that properly renormalized, the local time in a critical generation converges in law towards some function of a stable continuous-state branching process. We also provide an explicit equivalent of the probability that critical generations are reached by the random walk $\X$.
\end{abstract}

 \begin{keyword}[class=AenMS]
 \kwd[MSC2020 :  ] {60K37}, {60J80}
 \end{keyword}

\begin{keyword}
\kwd{randomly biased random walks}
\kwd{branching random walks}
\kwd{Galton-Watson trees}
\kwd{stable continuous-state branching processes}
\end{keyword}

\end{frontmatter}


\section{Introduction}

\subsection{Random walk on a Galton-Watson marked tree}\label{RBRWT}
Denote by $\N=\{0,1,2,\cdots\}$ the set of non-negative integers and $\N^*=\{1,2,\cdots\}$ the set of positive integers. Given, under a probability measure $\Pb$, a $\bigcup_{k\in\N}\R^k$-valued random variable $\mathcal{P}$ ($\mathbb{R}^0$ only contains the sequence with length $0$) with $N:=\#\mathcal{P}$ denoting the cardinal of $\mathcal{P}$, we consider the following Galton-Watson marked tree $(\T,(A_x;x\in\T))$ rooted at $e$: the generation $0$ contains one marked individual $(e,A_e)=(e,0)$. For any $n\in\N^*$, assume the generation $n-1$ has been built. If it is empty, then the generation $n$ is also empty. Otherwise, for any vertex $x$ in the generation $n-1$, let $\mathcal{P}^x:=\{A_{x^{1}},\ldots,A_{x^{N(x)}}\}$ be a random variable distributed as $\mathcal{P}$ where $N(x):=\#\mathcal{P}^x$. The vertex $x$ gives progeny to $N(x)$ marked children $(x^{1},A_{x^{1}}),\ldots,(x^{N(x)},A_{x^{N(x)}})$ independently of the other vertices in generation $n-1$, thus forming the generation $n$. We assume $\Eb[N]>1$ so that $\T$ is a super-critical Galton-Watson tree with offspring $N$, that is $\Pb(\textrm{non-extinction of }\T)>0$ and we define $\Pb^*(\cdot):=\Pb(\cdot|\textrm{non-extinction of }\T)$, where $\Eb$ (resp. $\Eb^*$) denotes the expectation with respect to $\Pb$ (resp. $\Pb^*$). \\
For any vertex $x\in\T$, we denote by $|x|$ the generation of $x$ and $x^*$ stands for the parent of $x$, that is the most recent ancestor of $x$. For convenience, we add a parent $e^*$ to the root $e$. For any $x,y\in\T$, we write $x\leq y$ if $x$ is an ancestor of $y$ ($y$ is said to be a descendent of $x$) and $x<y$ if $x\leq y$ and $x\not=y$. \\
Let us now introduce the branching potential $V:x\in\T\mapsto V(x)\in\R$: let $V(e)=A_e=0$ and for any $x\in\T\setminus{\{e}\}$
$$ V(x):=\sum_{e<z\leq x}A_z=\sum_{i=1}^{|x|} A_{x_i}. $$
Under $\Pb$, $\mathcal{E}:=(\T,(V(x);x\in\T))$ is a real valued branching random walk such that $(V(x);\; |x|=1)$ is distributed as $\mathcal{P}$. We will then refer to $\mathcal{E}$ as the random environment. \\
We are now ready to introduce the main process of our study. Given a realization of the random environment $\mathcal{E}$, we define a $\T\cup\{e^*\}$-valued nearest-neighbour random walk $\X:=(X_j)_{j\in\N}$, reflected in $e^*$ whose transition probabilities are, under the quenched probabilities $\{\P^{\mathcal{E}}_z; z\in\T\cup\{e^*\}\}$: for any $x\in\T$, $\P^{\mathcal{E}}_x(X_0=x)=1$ and 
\begin{align*}
    p^{\mathcal{E}}(x,x^*)=\frac{e^{-V(x)}}{e^{-V(x)}+\sum_{i=1}^{N(x)}e^{-V(x^i)}}\;\;\textrm{ and for all } 1\leq j\leq N(x),\;\; p^{\mathcal{E}}(x,x^j)=\frac{e^{-V(x^j)}}{e^{-V(x)}+\sum_{i=1}^{N(x)}e^{-V(x^i)}}.
\end{align*}
Otherwise, $p^{\mathcal{E}}(x,u)=0$ and $p^{\mathcal{E}}(e^*,e)=1$. Let $\P^{\mathcal{E}}:=\P^{\mathcal{E}}_e$, that is the quenched probability of $\X$ starting from the root $e$ and we finally define the following annealed probabilities
$$ \P(\cdot):=\Eb[\P^{\mathcal{E}}(\cdot)]\;\;\textrm{ and }\;\;\P^*(\cdot):=\Eb^*[\P^{\mathcal{E}}(\cdot)]. $$ 
R. Lyons and R. Pemantle \cite{LyonPema} initiated the study of the randomly biased random walk $\X$. \\
When, for all $x\in\T$, $V(x)=|x|\log\lambda$ for a some constant $\lambda>0$, the walk $\X$ is known as the $\lambda$-biased random walk on $\T\cup\{e^*\}$ and was first introduced by R. Lyons (see \cite{Lyons} and \cite{Lyons2}). The $\lambda$-biased random walk is transient (on the set of non-extinction) unless the bias is strong enough: if $\lambda\geq\Eb[N]$ then, $\Pb$-almost surely, $\X$ is recurrent (positive recurrent if $\lambda>\Eb[N]$). R. Lyons, R. Pemantle and Y. Peres (see \cite{LyonsRussellPemantle1} and \cite{LyonsRussellPemantle2}), later joined by G. Ben Arous, A. Fribergh, N. Gantert, A. Hammond \cite{BA_F_G_H} and E. Aïdékon \cite{AidekonSpeed} for example, studied the transient case and paid a particular attention to the speed $v_{\lambda}:=\lim_{n\to\infty}|X_n|/n\in[0,\infty)$ of the random walk.  \\
When the bias is random, the behavior of $\X$ depends on the fluctuations of the following $\log$-Laplace transform: for any $t\geq 0$ 
\begin{align*}
    \psi(t):=\log\Eb\Big[\sum_{|x|=1}e^{-tV(x)}\Big]=\log\Eb\Big[\sum_{|x|=1}e^{-tA_x}\Big], 
\end{align*}
and we assume that $\psi$ is finite is a neighbourhood of $1$ and that $\psi'(1)$ exists. As stated by R. Lyons and R. Pemantle \cite{LyonPema}, if $\inf_{t\in[0,1]}\psi(t)$ is positive, then $\Pb^*$-almost surely, $\X$ is transient and we refer to the work of E. Aïdékon \cite{Aidekon2008} for this case. Otherwise, it is recurrent. More specifically, G. Faraud \cite{Faraud} proved that the random walk $\X$ is $\Pb$-almost surely positive recurrent either if $\inf_{t\in[0,1]}\psi(t)<0$ or if $\inf_{t\in[0,1]}\psi(t)=0$ and $\psi'(1)>0$. It is null recurrent if $\inf_{t\in[0,1]}\psi(t)=0$ and $\psi'(1)\leq 0$. We refer for instance to \cite{HuShi10a}, \cite{HuShi10b}, \cite{AndDeb2}, \cite{HuShi15b}, \cite{HuShi10b}, \cite{AndChen} and \cite{AndAKHightPotential} for further details on the case $\inf_{t\in[0,1]}\psi(t)=0$ and $\psi'(1)=0$, also known as the slow regime for the random walk $\X$ because the largest generation reached by $(X_j)_{j\leq n}$ is of order $(\log n)^3$, see in particular \cite{HuShi10a} and \cite{HuShi10b}. \\
The present paper is dedicated to the null recurrent randomly biased walk $\X$, and we put ourselves in the following case for the random environment $\mathcal{E}$: we assume 
\begin{align}\label{DiffCase}
    \inf_{t\in[0,1]}\psi(t)=\psi(1)=0\;\;\textrm{ and }\;\;\psi'(1)<0.
\end{align}
\noindent Let us then introduce
\begin{align}\label{DefKappa}
    \kappa:=\inf\{t>1;\; \psi(t)=0\},
\end{align}
and assume $\kappa\in(1,\infty)$. Define the notation $t\land s=\min(t,s)$. We require the following:
\begin{Ass}\label{Assumption1}
Assume that there exists $\delta_1>0$ such that $\psi(t)<\infty$ for all $t\in(1-\delta_1,\kappa\land 2+\delta_1)$. Moreover
\begin{align}\label{EspCarreGen1}
    \Eb\Bigg[\underset{|x|=|y|=1}{\sum_{x\not=y}}e^{-V(x)-V(y)}\Bigg]<\infty,
 \end{align}
and if $\kappa\in(1,2]$, then
\begin{align}
    \Eb\Big[\sum_{|x|=1}\max\big(-V(x),0\big)e^{-\kappa V(x)}\Big]<\infty.
\end{align}
\end{Ass}
\begin{Ass}\label{Assumption2}
    The distribution of the $\bigcup_{k\in\N}\R^k$-valued random variable $\mathcal{P}$ is non-lattice.
\end{Ass}
\noindent When Assumptions \ref{Assumption1} and \ref{Assumption2} hold, E. Aïdékon and L. de Raphélis (see Theorem 6.1 in \cite{AidRap}) for $\kappa>2$ and L. de Raphelis (see Theorem 1 in \cite{deRaph1} for $\kappa\in(1,2]$) proved that in law, under the quenched probability $\P^{\mathcal{E}}$ for the space of càdlàg functions $D([0,\infty),\R)$, the sequence of random processes $((|X_{\tn}|/n^{1-1/(\kappa\land 2)};\; t\geq 0))$ if $\kappa\not=2$ (resp. $((|X_{\tn}|(\log n)^{1/2}/n^{1/2};\; t\geq 0))$ if $\kappa=2$) converges to the continuous-time height process associated with some stable Lévy process, see section \ref{SectionReducedProcesses} for more details. In particular, the random walk $\X$ is said to be sub-diffusive when $\kappa\in(1,2]$ and diffusive when $\kappa>2$.

\subsection{The range of the random walk $\X$ and local times}\label{SectionRange}

\noindent Let $\tau^0=0$ and for any $j\geq 1$
\begin{align*}
    \tau^j:=\inf\{k>\tau^{j-1};\; X_{k-1}=e^*,\; X_k=e\},
\end{align*}
the successive return times to the oriented edge $(e^*,e)$, with the convention $\inf\varnothing=+\infty$. When Assumptions \ref{Assumption1} and \ref{Assumption2} hold, it is known (see \cite{AndDeb1}, \cite{Hu2017} and more recently Theorem 1.2 in \cite{AK23LocalTimes}) that $\Pb^*$-almost surely, in law under $\P^{\mathcal{E}}$
\begin{align}
    \Big(\frac{\tau^n}{n^{\kappa\land 2}}\Big)\;\;\textrm{ if }\kappa\not=2\;\;\textrm{ and }\;\;\Big(\frac{\log n}{n^2}\tau^n\Big)\textrm{ if }\kappa=2\;\;\textrm{ converge in law to }\;\mathfrak{C}_{\kappa}(W_{\infty})^{\kappa\land 2}\bm{\tau}_{\kappa},
\end{align}
for some explicit constant $\mathfrak{C}_{\kappa}>0$, $\bm{\tau}_{\kappa}$ stands for the first hitting time of $-1$ by some $(\kappa\land 2)$-stable Lévy process with no negative jump and $W_{\infty}$ is the limit of the additive martingale $(\sum_{|x|=k}e^{-V(x)})_{k\geq 0}$. It is known that still under the Assumptions \ref{Assumption1} and \ref{Assumption2}, $\Pb(W_{\infty}>0)>0$, see \cite{Biggins1977}, \cite{Lyons1997}, \cite{Liu1} or \cite{Alsmeyer_Iksanov} for instance. Moreover, it is claimed in \cite{Biggins1977} that $\Pb$-almost surely, the event $\{W_{\infty}>0\}$ coincides with the event of non extinction of the underlying Galton-Watson tree $\T$. In particular, $\Pb^*(W_{\infty}>0)=1$. It is also well known that under the Assumptions \ref{Assumption1} and \ref{Assumption2}, $(W_k)$ is bounded in $\mathrm{L}^r(\Pb)$ for any $r\in[1,\kappa)$ if $\kappa\in(1,2]$ and in $\mathrm{L}^2(\Pb)$ for all $\kappa>2$. 

\vspace{0.2cm}

\noindent For any $x\in\T$ and $p\geq 1$, define the edge local time $\VectCoord{N}{p}_x$ by
\begin{align*}
    \VectCoord{N}{p}_x:=\sum_{j=1}^{\tau^p}\un_{\{X_{j-1}=x^*,\; X_j=x\}},
\end{align*}
the number of times the oriented edge $(x^*,x)$ has been visited by the random walk $\X$ up to $\tau^p$. Let us also introduce the range $\VectCoord{\mathcal{R}}{p}$ of $\X$
\begin{align*}
    \VectCoord{\mathcal{R}}{p}:=\big\{x\in\T;\; \VectCoord{N}{p}_x\geq 1\big\},
\end{align*}
the sub-tree of $\T$ made up of the vertices visited by the random walk $\X$ up to time $\tau^p$.
\begin{Fact}[Lemma 3.1, \cite{AidRap}]\label{FactGWMulti}
Under $\P$, for any $p\in\N^*$, $(\VectCoord{\mathcal{R}}{p},(\VectCoord{N}{p}_x;\; x\in\VectCoord{\mathcal{R}}{p}))$ is a multi-type Galton-Watson tree with initial type equal to $p$. Moreover, we have the following characterization: for any $x\in\T$, $x\not=e$, any $k_1,\ldots,k_{N(x)}\in\N$
\begin{align*}
    &\un_{\{N^{(1)}_x\geq 1\}}\P^{\mathcal{E}}\Big(\bigcap_{i=1}^{N(x)}\{N_{x^i}^{(1)}=k_i\}\big|N_z^{(1)};\; z\leq x\Big) \\ & =\un_{\{N^{(1)}_x\geq 1\}}\frac{(N^{(1)}_x-1+\sum_{i=1}^{N(x)}k_i)!}{(N^{(1)}_x-1)!k_1!\cdots k_{N(x)}!}\times p^{\mathcal{E}}(x,x^*)^{N^{(1)}_x}\prod_{i=1}^{N(x)}p^{\mathcal{E}}(x,x^i)^{k_i},
\end{align*}
and clearly, if $N_x^{(1)}=0$, then $N_y^{(1)}=0$ for all $y\geq x$.    
\end{Fact}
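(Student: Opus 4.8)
This is a local-time/branching identity of Ray--Knight type, and the plan is to encode a trajectory of $\X$ on $[0,\tau^{p}]$ by its family of \emph{local skeletons} and to read off the multi-type Galton--Watson structure directly from this encoding. For a vertex $x$ visited by $\X$ before $\tau^{p}$, let $\mathbf{w}_{x}$ be the word, over the alphabet of neighbours $\{x^{*},x^{1},\dots,x^{N(x)}\}$ of $x$, that lists in chronological order the target of each step performed by $\X$ out of $x$ up to time $\tau^{p}$. Then $\VectCoord{N}{p}_{x^{i}}$ is the number of occurrences of $x^{i}$ in $\mathbf{w}_{x}$, and $\VectCoord{N}{p}_{x}$ is the number of occurrences of $x$ in $\mathbf{w}_{x^{*}}$ (equivalently, of $x^{*}$ in $\mathbf{w}_{x}$).

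The first step is to see that this encoding is essentially bijective. Given $\mathcal{E}$, the trajectory is reconstructed from $(\mathbf{w}_{y})_{y}$ by reading, at each step, the next unused letter of $\mathbf{w}_{x}$ with $x$ the current position. Conversely, since $\X$ is recurrent we have $\tau^{p}<\infty$ $\P$-a.s. with $X_{0}=X_{\tau^{p}}=e$, and this forces: every edge is crossed as often upwards as downwards (zero net flux), so the family is \emph{consistent} --- for $x\ne e$ the number of letters $x$ in $\mathbf{w}_{x^{*}}$ equals the number of letters $x^{*}$ in $\mathbf{w}_{x}$; every $\mathbf{w}_{x}$ with $x\ne e$ ends with the letter $x^{*}$ (otherwise $\X$ would be trapped in a child subtree and could not return to $e$); $\mathbf{w}_{e}$ is a concatenation of $p$ words each ending with $e^{*}$; and only finitely many words are nonempty. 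Moreover, since each visit of $\X$ to $x$ contributes exactly one transition factor $p^{\mathcal{E}}(x,\cdot)$ (the reflection at $e^{*}$ contributing trivially), the quenched probability of the trajectory factorises as $\prod_{y}\prod_{j}p^{\mathcal{E}}\!\big(y,\mathbf{w}_{y}(j)\big)$.

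The crux is then a cancellation: for a fixed vertex $x$ and a prescribed number $k\ge 1$ of letters $x^{*}$, the sum of $\prod_{j}p^{\mathcal{E}}(x,\mathbf{w}_{x}(j))$ over all admissible words $\mathbf{w}_{x}$ with $k$ letters $x^{*}$ equals $1$, because such a word is a concatenation of $k$ blocks, each an arbitrary word over $\{x^{1},\dots,x^{N(x)}\}$ followed by one $x^{*}$, and one block has total weight $p^{\mathcal{E}}(x,x^{*})/\big(1-\sum_{i}p^{\mathcal{E}}(x,x^{i})\big)=1$ since $p^{\mathcal{E}}(x,\cdot)$ is a probability measure. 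Applying the same cancellation over the descendants of $x$ and over the other letters of $\mathbf{w}_{x^{*}}$, we obtain that, conditionally on $(\VectCoord{N}{p}_{z};\,z\le x)$ --- which only records $k:=\VectCoord{N}{p}_{x}$ --- the word $\mathbf{w}_{x}$ is distributed as a genuine probability-weighted admissible word with $k$ letters $x^{*}$, and is independent of everything attached to non-descendants of $x$. Hence, on $\{\VectCoord{N}{p}_{x}=k\}$, the vector $(\VectCoord{N}{p}_{x^{i}})_{i}$ takes the value $(k_{1},\dots,k_{N(x)})$ with $\P^{\mathcal{E}}$-probability equal to the number of admissible words with $k$ letters $x^{*}$ and $k_{i}$ letters $x^{i}$, times $p^{\mathcal{E}}(x,x^{*})^{k}\prod_{i}p^{\mathcal{E}}(x,x^{i})^{k_{i}}$; and such a word is determined by the positions of its $k-1$ non-final $x^{*}$'s and its $k_{i}$ letters $x^{i}$ among the first $k-1+\sum_{i}k_{i}$ slots (the last slot being $x^{*}$), so that count is $(k-1+\sum_{i}k_{i})!/\big((k-1)!\,k_{1}!\cdots k_{N(x)}!\big)$ --- which is exactly the displayed formula (for $p=1$; the case $k=0$ being trivial, as an unvisited $x$ has $\VectCoord{N}{p}_{y}=0$ for all $y\ge x$). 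Finally, the independence just noted, iterated over generations, shows that given $(\VectCoord{N}{p}_{x};\,|x|=n)$ the words $(\mathbf{w}_{x};\,|x|=n)$, hence the generation-$(n+1)$ edge local times, are produced independently with the above offspring law; this is the multi-type Galton--Watson property, with root type $p$ since $\mathbf{w}_{e}$ has $p$ letters $e^{*}$.

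The main obstacle is the bijection of the second step: one must check precisely that the admissible skeleton families are exactly the consistent, finitely supported, well-founded ones and that the map to trajectories of $\X$ on $[0,\tau^{p}]$ is one-to-one --- this is where recurrence ($\tau^{p}<\infty$), the nearest-neighbour dynamics and the reflection at $e^{*}$ are used. Once this is in place, the cancellation --- that the $p^{\mathcal{E}}(x,\cdot)$-weights of the admissible skeletons of a type-$k$ vertex sum to $1$, so that no renormalisation occurs and the conditional law of $\mathbf{w}_{x}$ depends on the past only through $\VectCoord{N}{p}_{x}$ --- is the short but essential observation, and the rest is bookkeeping; the passage from the quenched statement to the annealed $\P$ is immediate.
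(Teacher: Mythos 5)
The paper does not prove this statement: it is imported verbatim as a Fact from Lemma~3.1 of \cite{AidRap}, so there is no in-paper argument to compare against. Your proof is correct and is the standard excursion/word-counting argument for edge local times of a nearest-neighbour walk on a tree (and is the same in spirit as the proof in the cited reference): record at each vertex the chronological word of departures, observe that the quenched path probability factorises over vertices, that a word at $x$ with $k$ occurrences of $x^*$ must end in $x^*$ and splits into $k$ blocks each of total weight $p^{\mathcal{E}}(x,x^*)/\bigl(1-\sum_i p^{\mathcal{E}}(x,x^i)\bigr)=1$, and that the multinomial count of such words with prescribed letter multiplicities is exactly $\tfrac{(k-1+\sum_i k_i)!}{(k-1)!\,k_1!\cdots k_{N(x)}!}$, yielding the negative-multinomial offspring law and the branching property. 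The only place where you are (self-admittedly) terse is the surjectivity/well-foundedness of the encoding; note that you can bypass a full combinatorial characterisation of admissible families: the encoding is injective, the candidate product law you construct on word families has total mass $1$ (by the block cancellation applied vertex by vertex), and recurrence gives $\P^{\mathcal{E}}(\tau^p<\infty)=1$, so the image of the trajectory space automatically carries full mass and no renormalisation can occur. With that remark the argument is complete; recurrence, i.e.\ \eqref{DiffCase}, is genuinely needed, since otherwise $N^{(p)}_x$ may be infinite.
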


\vspace{0.2cm}
 
\noindent For any $p\geq 1$ and $k\geq 0$, let $\mathcal{G}^{(p)}_{k}$ be the sigma-algebra generated by $\{x\in\T,\; |x|\leq k;\; N_x^{(p)}\}$. Define
\begin{align}\label{DefLocalTimes}
    \VectCoord{L}{p}_k:=\sum_{j=1}^{\tau^p}\un_{\{|X_j|=k\}},
\end{align}
the local time of $(|X_j|)_{j\geq 1}$ at level $k$ and time $\tau^p$. One can notice that $L^{(p)}_k=Z^{(p)}_k+Z^{(p)}_{k+1}$ where
\begin{align}\label{DefMultiMart}
    Z^{(p)}_k:=\sum_{|x|=k}N_x^{(p)}.
\end{align}
Note that thanks to Fact \ref{FactGWMulti}, we have that $$\E\Big[\sum_{|x|=k}\VectCoord{N}{p}_x\Big|\VectCoord{\mathcal{G}}{p}_{k-1}\Big]=\E\Big[\sum_{|x|=k-1}\sum_{z;z^*=x}\VectCoord{N}{p}_z\Big|\VectCoord{\mathcal{G}}{p}_{k-1}\Big]=\sum_{|x|=k-1}
\VectCoord{N}{p}_x.$$ Hence, $(\VectCoord{Z}{p}_k/p)_{k\geq 0}$ is, under the annealed probability $\P$ and for any $p\in\N^*$, a non-negative $(\VectCoord{\mathcal{G}}{p}_k)_{k\in\N}$-martingale such that $\E[\VectCoord{Z}{p}_k/p]=\VectCoord{Z}{p}_0/p=1$ for all $k\in\N$. $(\VectCoord{Z}{p}_k/p)_{k\geq 0}$ is referred to as the \textit{multi-type additive martingale} in \cite{deRaph1}.

\vspace{0.1cm}

\noindent Note that interesting results about the behavior of $\{N^{(p)}_x;\; x\in\T\}$ exist. For instance, it is proved in \cite{ChenDeRaphMax} that under technical assumptions, $(\max\{N^{(n)}_x;\; x\in\T\}/n)_{n\geq 1}$ converges in law to an explicit positive random variable. 

\section{Main results}

If $\kappa\in(1,2]$, let us introduce
\begin{align}\label{ConstantTail}
    c_{\kappa}:=\lim_{r\to\infty}r^{\kappa}\P\Big(\sum_{x\in\T}
    \un_{\{N_x^{(1)}=1,\; \min_{e<z<x}N_z^{(1)}\geq 2\}}>r\Big),
\end{align}
and if $\kappa>2$, we define
\begin{align*}
    c_0=\frac{1}{1-e^{\psi(2)}}\Eb\Big[\sum_{x\not=y;\; |x|=|y|=1}e^{-V(x)-V(y)}\Big],
\end{align*}
with the convention $\sum_{\varnothing}=0$. The proof of the existence of $c_{\kappa}\in(0,\infty)$ under the Assumptions \ref{Assumption1} and \ref{Assumption2} is one of the main purposes of the paper of L. de Raphélis (\cite{deRaph1}, Proposition 2).\\
Let $(S_j-S_{j-1})_{j\in\N^*}$ be a sequence of\textrm{ i.i.d }real valued random variables such that $S_0=0$ and for any measurable and non-negative function $f:\R\to\R$
\begin{align}\label{LoiRW}
    \Eb[f(S_1)]=\Eb\Big[\sum_{|x|=1}e^{-V(x)}f(V(x))\Big].
\end{align}
Introduce the positive constant $C_{\infty}:=\Eb[(\sum_{j\geq 0}e^{-S_j})^{-2}]$ and
\begin{align}\label{DefCkappa}
\bm{\mathrm{C}}_{\kappa}:=\left \{\begin{array}{l c l}
    c_{\kappa}|\Gamma(1-\kappa)|  & \text{if} & \kappa\in(1,2), \\
    \frac{c_2}{2}  & \text{if} & \kappa=2, \\
    \frac{c_0}{C_{\infty}}  & \text{if} & \kappa>2,
\end{array}
\right .
\end{align}
where $\Gamma(\cdot)$ stands for the Gamma function.

\vspace{0.2cm}

\noindent We are interested in the behavior of the local time up to the first return time to the root. P. Rousselin proved (Theorem 1.2 in \cite{RousselinConduc}) that for any $\kappa>1$, $\Pb^*$-almost surely, $\P^{\mathcal{E}}(\VectCoord{L}{1}_{m}>0)/\P(\VectCoord{L}{1}_{m}>0)\to W_{\infty}$ as $m\to\infty$ but was unfortunately only able to obtain the rate of convergence of $\P^{\mathcal{E}}(\VectCoord{L}{1}_{m}>0)$ and $\P(\VectCoord{L}{1}_{m}>0)$ as $m\to\infty$ when $\kappa\in(1,2]$. Our first result allows to provide an explicit equivalent of these probabilities.
\begin{theo}\label{Theorem1}
Assume that the Assumptions \ref{Assumption1} and \ref{Assumption2} hold.
\begin{enumerate}[label=(\roman*)]
    \item If $\kappa\in(1,2)$, then in $\Pb^*$-probability
    \begin{align*}
        n\P^{\mathcal{E}}\big(\VectCoord{L}{1}_{\lfloor n^{\kappa-1}\rfloor}>0\big)\underset{n\to\infty}{\longrightarrow}W_{\infty}\big((\kappa-1)C_{\infty}c_{\kappa}|\Gamma(1-\kappa)|\big)^{-1/(\kappa-1)},
    \end{align*}
    \item If $\kappa=2$, then in $\Pb^*$-probability
    \begin{align*}
        n\P^{\mathcal{E}}\big(\VectCoord{L}{1}_{\lfloor n/\log n\rfloor}>0\big)\underset{n\to\infty}{\longrightarrow}\frac{W_{\infty}}{C_{\infty}c_2},
    \end{align*}
    \item\label{Survival3} If $\kappa>2$, then in $\Pb^*$-probability
    \begin{align*}
        n\P^{\mathcal{E}}\big(\VectCoord{L}{1}_n>0\big)\underset{n\to\infty}{\longrightarrow}\frac{W_{\infty}}{c_0},
    \end{align*}
\end{enumerate}
Moreover, for any $\kappa>1$, in $\Pb^*$-probability
\begin{align*}
    \frac{\P^{\mathcal{E}}\big(\VectCoord{L}{1}_{m}>0\big)}{\P\big(\VectCoord{L}{1}_{m}>0\big)}\underset{m\to\infty}{\longrightarrow}W_{\infty}.
\end{align*}
\end{theo}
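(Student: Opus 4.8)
The plan is to reduce all three items, and the last display, to a single \emph{deterministic} asymptotic for the annealed probability $a_m:=\P(L^{(1)}_m>0)$, and then to transfer it to the quenched setting via an available comparison theorem. First I would observe that $\{L^{(1)}_m>0\}=\{Z^{(1)}_m>0\}$ (to reach generation $m+1$ the walk must cross an edge at level $m$), and that up to time $\tau^1$ the walk performs exactly one excursion away from $e$, so $\{L^{(1)}_m>0\}$ is the event that $\X$ hits generation $m$ before $e^*$. Since $\X$ is reversible with conductances $c(x^*,x)=e^{-V(x)}$ on the edges of $\T$ and $c(e,e^*)=1$, the classical hitting-probability formula yields
\begin{align*}
\P^{\mathcal{E}}\big(L^{(1)}_m>0\big)=\frac{\mathcal{C}_m^{\mathcal{E}}}{1+\mathcal{C}_m^{\mathcal{E}}},\qquad \mathcal{C}_m^{\mathcal{E}}:=\mathcal{C}_{\mathrm{eff}}\big(e\leftrightarrow\{|x|=m\}\big),
\end{align*}
and, $\X$ being recurrent under \eqref{DiffCase}, $\mathcal{C}_m^{\mathcal{E}}\downarrow\mathcal{C}_{\mathrm{eff}}(e\leftrightarrow\infty)=0$, whence $\P^{\mathcal{E}}(L^{(1)}_m>0)\sim\mathcal{C}_m^{\mathcal{E}}$. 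As Theorem~1.2 of \cite{RousselinConduc} already gives $\P^{\mathcal{E}}(L^{(1)}_m>0)/\P(L^{(1)}_m>0)\to W_{\infty}$ $\Pb^*$-a.s. — which in particular is the ``Moreover'' display of Theorem~\ref{Theorem1} — it then suffices to produce the explicit equivalent of $a_m$: one obtains $\P^{\mathcal{E}}(L^{(1)}_m>0)\sim W_{\infty}a_m$ $\Pb^*$-a.s., and the substitutions $m=\lfloor n^{\kappa-1}\rfloor$, $\lfloor n/\log n\rfloor$, $n$ then give (i), (ii), (iii).

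For the annealed equivalent I would exploit Fact~\ref{FactGWMulti}: $(\mathcal{R}^{(1)},(N^{(1)}_x))$ is a critical multitype Galton--Watson tree of initial type $1$, and the series/parallel laws turn the identity above into the distributional recursion $\mathcal{C}_m^{\mathcal{E}}=\sum_{|x|=1}\big(e^{V(x)}+1/\mathcal{C}_{m-1}^{(x)}\big)^{-1}$, with the $\mathcal{C}_{m-1}^{(x)}$ i.i.d.\ copies of $\mathcal{C}_{m-1}^{\mathcal{E}}$ independent of $(V(x);\,|x|=1)$. Running a Slack--Kolmogorov-type analysis, taking expectations and expanding to second order gives $a_m\approx a_{m-1}-\varrho(a_{m-1})$, where $\varrho$ records the second-order (generating-function / Laplace) behaviour of the offspring law of the associated \emph{reduced} Galton--Watson process; the continuum counterpart is the extinction flow of the $(\kappa\wedge2)$-stable continuous-state branching process $Y$ that arises as the scaling limit of $(Z^{(1)}_{\lfloor tm\rfloor}/m)_{t\ge0}$ (de Raphélis \cite{deRaph1}, Aïdékon--de Raphélis \cite{AidRap}), whose extinction probability from mass $a$ at time $t$ is $e^{-av_t}$ with $v_t\asymp t^{-1/(\kappa-1)}$ when $\kappa\in(1,2)$. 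Two constants enter $\varrho$: a scale factor $C_{\infty}=\E[(\sum_{j\ge0}e^{-S_j})^{-2}]$, which comes out of a many-to-one (spine) computation along the $e^{-V}$-tilted walk $S$ of \eqref{LoiRW}; and the offspring-tail constant, which for $\kappa\in(1,2)$ is the quantity $c_{\kappa}$ of \eqref{ConstantTail} (finite by Proposition~2 of \cite{deRaph1}), the Tauberian passage from a tail $\sim c_{\kappa}r^{-\kappa}$ to the generating-function correction supplying the factor $|\Gamma(1-\kappa)|$. Integrating $\tfrac{\ud}{\ud m}a_m^{-(\kappa-1)}\to(\kappa-1)C_{\infty}c_{\kappa}|\Gamma(1-\kappa)|$ yields $a_m\sim\big((\kappa-1)C_{\infty}\bm{\mathrm{C}}_{\kappa}m\big)^{-1/(\kappa-1)}$; at $\kappa=2$ (where $\psi(2)=0$) the tail $\sim c_2r^{-2}$ produces a logarithmic correction, $a_m\sim(C_{\infty}c_2\,m\log m)^{-1}$, consistent with (ii) since $m\log m\sim n$ for $m=\lfloor n/\log n\rfloor$; and for $\kappa>2$ the offspring variance is finite — controlled by $\E[\sum_{x\ne y,\,|x|=|y|=1}e^{-V(x)-V(y)}]<\infty$ from \eqref{EspCarreGen1}, the geometric factor $\sum_k e^{k\psi(2)}=(1-e^{\psi(2)})^{-1}$ entering — so the classical Kolmogorov dichotomy gives $a_m\sim(c_0m)^{-1}$. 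Together with the reduction above and the three substitutions this establishes (i)--(iii).

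The hard part will be the two-sided control of the vanishing probability $a_m$ and the exact matching of constants. The upper bound on $a_m$ is comparatively soft (first moment / subadditivity propagated through the generating-function recursion), but the lower bound needs a truncated second-moment estimate on $Z^{(1)}_m$ (equivalently on $\mathcal{C}_m^{\mathcal{E}}$), which is delicate because that second moment is infinite when $\kappa\le2$ — only logarithmically so at $\kappa=2$, which is precisely why the $\log m$ shows up in (ii). One must truncate the heavy tail of \eqref{ConstantTail} at a level tuned to $m$ and prove the discarded contribution negligible, using many-to-one estimates for the branching potential $V$ together with the moment and non-lattice hypotheses (Assumptions~\ref{Assumption1}--\ref{Assumption2}) and the tail asymptotics of \cite{deRaph1}. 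Carrying the precise constants $C_{\infty}$, $|\Gamma(1-\kappa)|$ and $c_{\kappa}$ (resp.\ $c_0$) through the Tauberian step, and keeping the $\kappa=2$ slowly-varying bookkeeping sharp, is where the real effort lies.
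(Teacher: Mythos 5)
Your reduction is fine as far as it goes: $\{L^{(1)}_m>0\}=\{Z^{(1)}_m>0\}$ is the event of hitting level $m$ before $e^*$, the formula $\P^{\mathcal{E}}(L^{(1)}_m>0)=\mathcal{C}^{\mathcal{E}}_m/(1+\mathcal{C}^{\mathcal{E}}_m)$ is correct (your series/parallel recursion is missing a factor $e^{V(x)}$ in front of $1/\mathcal{C}^{(x)}_{m-1}$, but that is cosmetic), and once an explicit equivalent of $a_m=\P(L^{(1)}_m>0)$ is available, Rousselin's ratio theorem does yield (i)--(iii) and the ``Moreover'' display. Note that the paper argues in the opposite order: it first proves the \emph{quenched} convergence by writing the survival event as the event that the height function of the reduced forest over the roots $\Bar{\mathcal{B}}^{(n)}$ exceeds the critical level, then applies the joint convergence of $(\bm{H}^{(n)},\bm{F}^{(n)})$ (Fact \ref{JointConvForest}) together with excursion theory for the stable height process, and finally deduces the annealed equivalent from the quenched one via $\P(\bm{L}^{(n)}_{\cdot}(\Bar{B}^{(n)})>0)=\E[1-(1-a_m)^{\Bar{B}^{(n)}}]$ and injectivity of $\theta\mapsto\Eb^*[e^{-\theta W_{\infty}}]$.

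The genuine gap is that your central step --- the annealed equivalent with the exact constant --- is only a plan, and the mechanism you propose does not work as stated. Under $\P$ the level sizes $(Z^{(1)}_k)_{k}$ are \emph{not} a single-type Galton--Watson process (the range is a multi-type tree with types $N^{(1)}_x\in\N$), so there is no exact iteration $a_m=1-f(1-a_{m-1})$ to which a Slack--Kolmogorov analysis can be applied; the paper only has the one-sided inequalities of Lemma \ref{FoncGen}, and, tellingly, its own Slack-style argument (Proposition \ref{UnifLawMultyMart}) \emph{uses} Theorem \ref{Theorem1} as input to know the asymptotics of $p_m$, so the direction of deduction you propose cannot borrow that machinery without circularity. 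Your appeal to ``the extinction flow of the stable CSBP arising as the scaling limit of $(Z^{(1)}_{\lfloor tm\rfloor}/m)$'' has the same problem: that scaling limit is this paper's Theorem \ref{Theorem3}, proved after and by means of Theorem \ref{Theorem1}; what the cited references actually provide is the height-process/forest convergence, and converting it into a sharp equivalent for a vanishing survival probability --- in particular pinning the level-change factor $C_{\infty}$ between original generations and generations of the type-$1$ reduced tree, conditionally on the polynomially rare survival event, and the Tauberian constant $c_{\kappa}|\Gamma(1-\kappa)|$ (resp.\ the $\log$ correction at $\kappa=2$, the variance constant $c_0$ for $\kappa>2$) --- is precisely the work you defer to ``the hard part.'' As written, the proposal establishes the easy transfer step and leaves the substantive estimate, which is the entire content of the theorem, unproven.
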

\begin{remark}\label{BoundedInLp}
    It is not difficult to show, under the Assumptions \ref{Assumption1} and \ref{Assumption2}, that $(\P^{\mathcal{E}}(\VectCoord{L}{1}_{m}>0)/\P(\VectCoord{L}{1}_{m}>0))_m$ is bounded in $\mathrm{L}^r(\Pb)$ for any $r\in[1,\kappa)$ if $\kappa\in(1,2]$ and in $\mathrm{L}^2(\Pb)$ if $\kappa>2$. For example, it is proved in Lemma 2.3 of \cite{RousselinConduc} that $\Eb[(\P^{\mathcal{E}}(\VectCoord{L}{1}_{m}>0)/\P(\VectCoord{L}{1}_{m}>0))^r]\leq\Eb[(W_m)^r]$ for any $r\geq 1$.
\end{remark}

\begin{remark}[Critical generations]\label{CriticalGenerations}
    In view of Theorem \ref{Theorem1}, we say that $(\ell_n)_{n\geq 1}$, a sequence of positive integers, is a sequence of critical generations for $(\mathcal{R}^{(n)})_{n\geq 1}$ if $\lim_{n\to\infty}\ell_n/n^{\kappa\land 2-1}\in(0,\infty)$ (resp. $\lim_{n\to\infty}(\log n)\ell_n/n\in(0,\infty)$) when $\kappa\not=2$ (resp. $\kappa=2$).
\end{remark}

\noindent We would like to state an important proposition. In that proposition, we deal with the limiting annealed law of the rescaled local time up to the first return time to the root. For any $\lambda\geq 0$, define $\phi^{(\kappa)}(\lambda)$ by
\begin{align}\label{PhiKappa}
    \phi^{(\kappa)}(\lambda):=1-\lambda\big(1+\lambda^{\kappa\land 2-1}\big)^{-1/(\kappa\land 2-1)}.
\end{align}
One can notice that if $\kappa\geq 2$, then $\phi^{(\kappa)}(\lambda)$ is the Laplace transform of an exponential random variable with mean $1$. For any $k\in\N$, let $p^{\mathcal{E}}_k:=\P^{\mathcal{E}}(Z^{(1)}_k>0)$ and $p_k:=\Eb[p^{\mathcal{E}}_k]=\P(Z^{(1)}_k>0)$. 

\vspace{0.2cm}

\noindent Before stating our proposition, let us introduce a definition. For any integer $i\geq 1$, let $(r_{i,m})_{m\geq 1}$ be a sequence of real numbers. We say that $(r_{i,m})_{m\geq 1}$ converges to $r_{i,m}$ uniformly in $i\geq 1$ if $$\lim_{m\to\infty}\sup_{i\geq 1}|r_{i,m}-r_i|\to 0.$$ We are now ready to state our proposition.

\begin{prop}\label{UnifLawMultyMart}
Assume that the Assumptions \ref{Assumption1} and \ref{Assumption2} hold. Let $i\geq 1$ be an integer, $(\gamma_{i,m})_{m\geq 1}$ and $(\rho_{i,m})_{m\geq 1}$  be two sequences of positive integers such that $0\leq\gamma^-_m\leq\gamma_{i,m}\leq\gamma^+_m$ and $0\leq\rho^-_m\leq\rho_{i,m}\leq\rho^+_m$ where $\gamma^-_m/m\to 1$, $\gamma^+_m/m\to 1$ and $\rho^+_m/m\to 0$ as $m\to\infty$. For any $\lambda_1, \lambda_2\geq 0$, we have, uniformly in $i\geq 1$ that by
\begin{align*}
    \lim_{m\to\infty}\Big(\E\Big[e^{-\lambda_1 p_{m}L^{(1)}_{\gamma_{i,m}}}e^{-\lambda_2p_mL^{(1)}_{\gamma_{i,m}+\rho_{i,m}}}\Big]\Big)^{1/p_m}=e^{-\big(1-\phi^{(\kappa)}(2\lambda_1+2\lambda_2)\big)}.
\end{align*}
\end{prop}

\noindent As a consequence of Proposition \ref{UnifLawMultyMart}, one can deduce the following result (which turns out to be an annealed version of Corollary \ref{Corollary1} that will be stated soon). Let $\theta\geq 0$ and $k\in\N$. If we denote $$\E\Big[e^{-\theta L^{(1)}_k}\big|L^{(1)}_k>0\Big]:=\frac{\E\Big[e^{-\theta L^{(1)}_k}\un_{\{L^{(1)}_k>0\}}\Big]}{\P\big(L^{(1)}_k>0\big)}, $$ the annealed Laplace transform of $L^{(1)}_k$ conditionally on the event $\{L^{(1)}_k>0\}$, which is different from $\Eb[\E^{\mathcal{E}}[e^{-\theta L^{(1)}_k}|L^{(1)}_k>0]]$, then for any $\lambda\geq 0$ 
\begin{enumerate}[label=(\roman*)]
    \item if $\kappa\in(1,2)$, then
    \begin{align*}
        \E\left[e^{-\frac{\lambda}{n}L^{(1)}_{\lfloor n^{\kappa-1}\rfloor}}\Big|L^{(1)}_{\lfloor n^{\kappa-1}\rfloor}>0\right]\underset{n\to\infty}{\longrightarrow}\phi^{(\kappa)}\Big(2\lambda\big((\kappa-1)C_{\infty}c_{\kappa}|\Gamma(1-\kappa)|\big)^{1/(\kappa-1)}\Big);
    \end{align*}
    \item if $\kappa=2$, then 
     \begin{align*}
        \E\left[e^{-\frac{\lambda}{n}L^{(1)}_{\lfloor n/\log n\rfloor}}\Big|L^{(1)}_{\lfloor n/\log n\rfloor}>0\right]\underset{n\to\infty}{\longrightarrow}\phi^{(\kappa)}\big(2\lambda C_{\infty}c_2\big);
    \end{align*}
    \item if $\kappa>2$, then
    \begin{align*}
        \E\left[e^{-\frac{\lambda}{n}L^{(1)}_n}\Big|L^{(1)}_n>0\right]\underset{n\to\infty}{\longrightarrow}\phi^{(\kappa)}\big(2\lambda c_0\big).
    \end{align*}
\end{enumerate}
Proposition \ref{UnifLawMultyMart} will be used to prove the next theorem. Before stating the results, we need to introduce a few definitions. Let $(\mathcal{Y}^{(\kappa)}_a;\;a\geq 0)$ be a continuous state branching process (CSBP) with branching mechanism $\lambda\mapsto C_{\infty}\bm{\mathrm{C}}_{\kappa}\lambda^{\kappa\land 2}$, in the sense that for almost every environment $\mathcal{E}$, $(\mathcal{Y}^{(\kappa)}_a;\; a\geq 0)$ is a real-valued Markov process such that $\mathcal{Y}^{(\kappa)}_0=1$ and for any $\lambda\geq 0$ and any $0\leq a\leq b$
\begin{align}\label{LaplaceCSBP}
    \E^{\mathcal{E}}\Big[e^{-\lambda\mathcal{Y}^{(\kappa)}_b}\big|\mathcal{Y}^{(\kappa)}_a\Big]= \exp\Big(-\lambda\VectCoord{\mathcal{Y}}{\kappa}_a\big(1+(b-a)(\kappa\land 2-1)C_{\infty}\bm{\mathrm{C}}_{\kappa}\lambda^{\kappa\land 2-1}\big)^{-1/(\kappa\land 2-1)}\Big).
\end{align} 
When $\kappa\geq 2$, the random process $(\mathcal{Y}^{(\kappa)}_a;\;a\geq 0)$ has continuous paths and is also referred to as the Feller diffusion, that is the unique strong solution of
\begin{align*}
    \mathrm{d}\mathcal{Y}^{(\kappa)}_a=\big(2C_{\infty}\bm{\mathrm{C}}_{\kappa}\mathcal{Y}^{(\kappa)}_a\big)^{1/2}\mathrm{d}B_a\;\;\textrm{ and }\;\;\mathcal{Y}^{(\kappa)}_0=1,
\end{align*}
where $(B_a;\; a\geq 0)$ is a standard Brownian motion. \\
Let us now introduce the following definition. Let $(\VectCoord{Y}{n})_{n\geq 1}$ be a sequence of random variables taking values in a metric space $\mathcal{Q}$ and let $Y$ be a $\mathcal{Q}$-valued random variable. We say that $(\VectCoord{Y}{n})_{n\geq 1}$ converges to $Y$ in $\Pb^*$-law if for any increasing sequence of positive integers $(n_q)_{q\geq 1}$, there exists an increasing sequence of positive integers $(m_{\ell})_{\ell\geq 1}$ such that $\Pb^*$-almost surely, the sequence $(Y^{(n_{m_{\ell}})})_{\ell\geq 1}$ converges in law to $Y$ under the quenched probability $\P^{\mathcal{E}}$. Note that the convergence in $\Pb^*$-law implies that for any bounded and continuous function $F:\mathcal{Q}\to\R$, in $\Pb^*$-probability and in $\mathrm{L}^r(\Pb^*)$ for any $r>0$
\begin{align*}
    \E^{\mathcal{E}}\big[F\big(Y^{(n)}\big)\big]\underset{n\to\infty}{\longrightarrow}\E^{\mathcal{E}}\big[F\big(Y\big)\big].
\end{align*}
In particular, the convergence in $\Pb^*$-law implies the convergence in law under the probability $\P^*$. \\
\noindent Our next result is dedicated to the convergence of the rescaled local time up to the $n$-th return time to the root.
\begin{theo}\label{Theorem3}
    Assume that the Assumptions \ref{Assumption1} and \ref{Assumption2} hold. Let $a>0$.
    \begin{enumerate}[label=(\roman*)]
        \item if $\kappa\in(1,2)$, then in $\Pb^*$-law
            \begin{align*}
                \frac{1}{n}L^{(n)}_{\lfloor an^{\kappa-1}\rfloor}\underset{n\to\infty}{\longrightarrow}2W_{\infty}\mathcal{Y}^{(\kappa)}_{a/(W_{\infty})^{\kappa-1}}\;; 
            \end{align*}
        \item if $\kappa=2$, then in $\Pb^*$-law
            \begin{align*}
                \frac{1}{n}L^{(n)}_{\lfloor an/\log n\rfloor}\underset{n\to\infty}{\longrightarrow}2W_{\infty}\mathcal{Y}^{(2)}_{a/W_{\infty}}\;; 
            \end{align*}
        \item if $\kappa>2$, then in $\Pb^*$-law 
        \begin{align*}
                \frac{1}{n}L^{(n)}_{\lfloor an\rfloor}\underset{n\to\infty}{\longrightarrow}2W_{\infty}\mathcal{Y}^{(\kappa)}_{a/W_{\infty}}\;. 
            \end{align*}
    \end{enumerate}
\end{theo}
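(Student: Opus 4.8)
Write $\ell_n$ for the critical generation in the regime at hand, so $\ell_n/n^{\kappa\land 2-1}\to a$ if $\kappa\neq 2$ and $(\log n)\ell_n/n\to a$ if $\kappa=2$ (cf. Remark~\ref{CriticalGenerations}). The plan is to pass through the multi-type additive martingale $(Z^{(n)}_k)_k$ instead of $L^{(n)}$ and to identify its scaling limit as a continuous-state branching process through its quenched Laplace transform. Since $L^{(n)}_k=Z^{(n)}_k+Z^{(n)}_{k+1}$, it suffices to prove the functional convergence, in $\Pb^*$-law on $D([0,\infty),\R)$, of $\big(Z^{(n)}_{\lfloor a n^{\kappa\land 2-1}\rfloor}/n\big)_{a\ge 0}$ (resp. of $\big(Z^{(n)}_{\lfloor a n/\log n\rfloor}/n\big)_{a\ge 0}$ when $\kappa=2$) towards $\big(W_\infty\mathcal{Y}^{(\kappa)}_{a/(W_\infty)^{\kappa\land 2-1}}\big)_{a\ge 0}$: the two generations $\ell_n$ and $\ell_n+1$ rescale to the same deterministic time $a$, at which the limiting CSBP is a.s. continuous, so $\big(Z^{(n)}_{\ell_n}/n,Z^{(n)}_{\ell_n+1}/n\big)$ converges to the diagonal point $\big(W_\infty\mathcal{Y}^{(\kappa)}_{a/(W_\infty)^{\kappa\land 2-1}},W_\infty\mathcal{Y}^{(\kappa)}_{a/(W_\infty)^{\kappa\land 2-1}}\big)$, whence the factor $2$.

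The structural input is that, conditionally on the environment $\mathcal{E}$, the offspring law of Fact~\ref{FactGWMulti} — the negative multinomial distribution with count parameter $k$ — is the $k$-fold convolution of its $k=1$ version; propagating this through the tree, $(N^{(n)}_x;x\in\T)$ is, under $\P^{\mathcal{E}}$, distributed as the superposition of $n$ i.i.d.\ copies of $(N^{(1)}_x;x\in\T)$, which is just the decomposition of the trajectory into $n$ i.i.d.\ excursions. In particular $Z^{(n)}_k\stackrel{d}{=}\sum_{i=1}^{n}Z^{(1),i}_k$ with i.i.d.\ summands under $\P^{\mathcal{E}}$, so the quenched Laplace transform factorises,
\[ \E^{\mathcal{E}}\!\big[e^{-\lambda Z^{(n)}_{\ell_n}/n}\big]=\Big(\E^{\mathcal{E}}\!\big[e^{-(\lambda/n)Z^{(1)}_{\ell_n}}\big]\Big)^{n}=\exp\!\Big(\!-n\big(1-\E^{\mathcal{E}}[e^{-(\lambda/n)Z^{(1)}_{\ell_n}}]\big)+o(1)\Big), \]
the last step being licit because $1-\E^{\mathcal{E}}[e^{-(\lambda/n)Z^{(1)}_{\ell_n}}]\le (\lambda/n)\E^{\mathcal{E}}[Z^{(1)}_{\ell_n}]=O_{\Pb^*}(1/n)$, since $\E^{\mathcal{E}}[Z^{(1)}_{\ell_n}]=\E^{\mathcal{E}}[Z^{(n)}_{\ell_n}]/n$ has $\P$-mean $1$. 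The whole problem thus reduces to the one-excursion estimate: in $\Pb^*$-probability, for every $\lambda\ge 0$,
\[ n\,\E^{\mathcal{E}}\!\big[1-e^{-(\lambda/n)Z^{(1)}_{\ell_n}}\big]\ \underset{n\to\infty}{\longrightarrow}\ W_\infty\, v_a(\lambda), \]
where $a\mapsto v_a(\lambda)$ is the cumulant semigroup of the CSBP $\mathcal{Y}^{(\kappa)}$ of \eqref{LaplaceCSBP}, i.e.\ the solution of $\partial_a v_a(\lambda)=-C_\infty\bm{\mathrm{C}}_\kappa v_a(\lambda)^{\kappa\land 2}$, $v_0(\lambda)=\lambda$, so that $\E^{\mathcal{E}}[e^{-\lambda\mathcal{Y}^{(\kappa)}_a}]=e^{-v_a(\lambda)}$.

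I would establish this estimate in two steps. First, since $\{Z^{(1)}_{\ell_n}>0\}=\{L^{(1)}_{\ell_n}>0\}$ (both are the event that generation $\ell_n$ is reached during one excursion), the boundary value $\lambda\to\infty$ is exactly Theorem~\ref{Theorem1}, namely $n\P^{\mathcal{E}}(Z^{(1)}_{\ell_n}>0)\to W_\infty v_a(\infty)$ with $v_a(\infty)=\lim_{\lambda\to\infty}v_a(\lambda)$, and one checks that $W_\infty v_a(\infty)$ is precisely the constant appearing there. Second, I would show that the law of $Z^{(1)}_{\ell_n}/n$ conditioned on $\{Z^{(1)}_{\ell_n}>0\}$ converges to a deterministic law with Laplace transform $1-v_a(\lambda)/v_a(\infty)$; this I would deduce from the scaling limit, building on de~Raphélis \cite{deRaph1} for $\kappa\in(1,2]$ and Aïdékon--de~Raphélis \cite{AidRap} for $\kappa>2$, of the single-excursion range together with its edge-local-time profile towards the height process and the local time at level $a$ of a $(\kappa\land 2)$-stable spectrally positive Lévy process, evaluating the joint Laplace transform of (level-$\ell_n$ mass, survival) against that limit and reading the normalisation $\bm{\mathrm{C}}_\kappa$ off the tail constant $c_\kappa$ of \eqref{ConstantTail} (with its factor $|\Gamma(1-\kappa)|$). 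Multiplying the two pieces and exchanging limits (uniform integrability, via the $\mathrm{L}^r$-bounds of Remark~\ref{BoundedInLp}) gives $n\E^{\mathcal{E}}[1-e^{-(\lambda/n)Z^{(1)}_{\ell_n}}]=n\P^{\mathcal{E}}(Z^{(1)}_{\ell_n}>0)\cdot\E^{\mathcal{E}}[1-e^{-(\lambda/n)Z^{(1)}_{\ell_n}}\mid Z^{(1)}_{\ell_n}>0]\to W_\infty v_a(\infty)\cdot v_a(\lambda)/v_a(\infty)=W_\infty v_a(\lambda)$. Plugging back, $\E^{\mathcal{E}}[e^{-\lambda Z^{(n)}_{\ell_n}/n}]\to e^{-W_\infty v_a(\lambda)}$ in $\Pb^*$-probability; by the branching property and the stable scaling identity $m\,\mathcal{Y}^{(\kappa)}_{t/m^{\kappa\land 2-1}}\stackrel{d}{=}(\text{CSBP issued from }m\text{ at time }t)$, read directly off \eqref{LaplaceCSBP}, this equals $\E^{\mathcal{E}}[e^{-\lambda W_\infty\mathcal{Y}^{(\kappa)}_{a/(W_\infty)^{\kappa\land 2-1}}}]$. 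Since Laplace transforms characterise laws on $[0,\infty)$, the subsequence device defining convergence in $\Pb^*$-law (extract, along any subsequence, a further one along which the quenched Laplace transforms converge $\Pb^*$-a.s.\ for every rational $\lambda$) upgrades this to $Z^{(n)}_{\ell_n}/n\to W_\infty\mathcal{Y}^{(\kappa)}_{a/(W_\infty)^{\kappa\land 2-1}}$ in $\Pb^*$-law, and the first paragraph concludes. The full functional statement follows by running the same argument jointly at several generations through the spatial Markov property of Fact~\ref{FactGWMulti} applied at an intermediate level, tightness coming again from Remark~\ref{BoundedInLp}.

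The main obstacle is the second step: extracting from the stable-height-process scaling limit the precise asymptotics of the level-$\ell_n$ mass of a single excursion conditioned on survival, and — above all — pinning down $\bm{\mathrm{C}}_\kappa$ with all its analytic factors ($|\Gamma(1-\kappa)|$, $C_\infty$, the logarithmic correction specific to $\kappa=2$) while keeping every estimate uniform enough in the environment to yield convergence in $\Pb^*$-probability and not merely annealed convergence. A secondary difficulty, for the functional statement, is the law of large numbers over the $\asymp n$ vertices $y$ of an intermediate generation $j$ needed when applying the Markov property — one must replace $\sum_{|y|=j}N^{(n)}_y\,W_\infty(\mathcal{E}^{(y)})$ by $Z^{(n)}_j$, where $\mathcal{E}^{(y)}$ is the environment re-rooted at $y$ and $N^{(n)}_y$ is not independent of the shallow part of $\mathcal{E}^{(y)}$.
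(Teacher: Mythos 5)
Your overall architecture (quenched i.i.d.\ decomposition of $Z^{(n)}_{\ell_n}$ into $n$ excursions, factorisation of the quenched Laplace transform, reduction to the single-excursion asymptotics $n\,\E^{\mathcal{E}}[1-e^{-(\lambda/n)Z^{(1)}_{\ell_n}}]\to W_\infty v_a(\lambda)$, and the splitting of that quantity into survival probability times conditional Yaglom limit) is sound and consistent with the constants; the identification $\{Z^{(1)}_{\ell_n}>0\}=\{L^{(1)}_{\ell_n}>0\}$ and the boundary case $\lambda=\infty$ via Theorem~\ref{Theorem1} are correct. But the proof has a genuine gap exactly where you locate "the main obstacle": the conditional limit law of $Z^{(1)}_{\ell_n}/n$ given survival. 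You propose to read it off the scaling limit of the single-excursion range "together with its edge-local-time profile towards \ldots the local time at level $a$" of the stable height process. No such level-mass convergence is available in \cite{deRaph1} or \cite{AidRap}: those references give convergence of the height function and of the Lukasiewicz path in the Skorokhod topology, and the level-set mass (the local time of the height process at a fixed level) is not a continuous functional of the path, so it does not follow from that convergence. Moreover, even an annealed level-mass limit would not suffice: you need the \emph{quenched} conditional Laplace transform to converge in $\Pb^*$-probability to a deterministic constant, which is essentially Corollary~\ref{Corollary1} — a statement the paper derives \emph{from} Theorems~\ref{Theorem1} and~\ref{Theorem3}, so invoking it here without an independent proof is circular.

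The paper closes this gap differently. Its key input is Proposition~\ref{UnifLawMultyMart}, an \emph{annealed} uniform Laplace-transform estimate for a single excursion at two nearby critical levels, proved by a Slack-type generating-function iteration: the sandwich $G_k(G_\ell(s))\le G_{k+\ell}(s)\le\E[e^{-Z^{(1)}_k(1-G_\ell(s))}e^{\cdots}]$ of Lemma~\ref{FoncGen}, combined with the survival asymptotics of Theorem~\ref{Theorem1} to calibrate $e^{-\lambda p_m}$ against $G_{k_m}(0)$. This avoids any local-time functional of the height process. The decomposition is then taken over the reduced range $\Bar{\mathcal{B}}^{(n)}$ (i.i.d.\ excursions \emph{under the annealed law}, with $\Bar{B}^{(n)}/n\to W_\infty$), the finite-dimensional and tightness estimates are carried out under $\P$, and the quenched statement is recovered at the very end by a second-moment concentration argument over the possible configurations of $\mathcal{B}^{(n)}$. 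If you want to keep your route, you must either prove the quenched Yaglom limit directly (which amounts to redoing Proposition~\ref{UnifLawMultyMart} and a concentration step) or establish convergence of the level masses of the rescaled excursion forest to the local times of the stable height process — neither of which is supplied. As a secondary point, the factor $2$ and the functional statement require the \emph{joint} convergence of the masses at levels $\ell_n$ and $\ell_n+1$ (and more generally at $\ell_n+\rho_n$ with $\rho_n=o(\ell_n)$), which is precisely why the paper's Proposition~\ref{UnifLawMultyMart} is stated as a two-level joint Laplace transform; your appeal to a.s.\ continuity of the limiting CSBP at a fixed time presupposes a joint limit that still has to be proved at the discrete level.
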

\noindent Theorem \ref{Theorem3} reminds us of very well-known results about the convergence of rescaled critical Galton-Watson processes, see for instance \cite{DuqLeGall}. Actually, we prove that somehow, the total amount of time spent by the random walk $\X$ in generation $\lfloor n^{\kappa\land 2-1}\rfloor$ if $\kappa\not=2$, $\lfloor n/\log n\rfloor$ if $\kappa=2$ up to time $\tau^n$ behaves like the the number of individuals in generation $\lfloor n^{\kappa\land 2-1}\rfloor$ if $\kappa\not=2$, $\lfloor n/\log n\rfloor$ if $\kappa=2$ of a population evolving according to some critical Galton-Watson process with $2nW_{\infty}$ initial individuals, which explains the presence of a CSBP in the right-hand side in Theorem \ref{Theorem3}.

\vspace{0.2cm}

\noindent Our last result is a consequence of Theorem \ref{Theorem1} and Theorem \ref{Theorem3}. It is an annealed result for the local time conditionally on the survival up to the first return time to the root.
\begin{coro}\label{Corollary1}
    Assume that the Assumptions \ref{Assumption1} and \ref{Assumption2} hold. Let $\lambda\geq 0$
\begin{enumerate}[label=(\roman*)]
    \item If $\kappa\in(1,2)$, then in $\Pb^*$-probability
    \begin{align*}
        \E^{\mathcal{E}}\left[e^{-\frac{\lambda}{n}L^{(1)}_{\lfloor n^{\kappa-1}\rfloor}}\Big|L^{(1)}_{\lfloor n^{\kappa-1}\rfloor}>0\right]\underset{n\to\infty}{\longrightarrow}\phi^{(\kappa)}\Big(2\lambda\big((\kappa-1)C_{\infty}c_{\kappa}|\Gamma(1-\kappa)|\big)^{1/(\kappa-1)}\Big);
    \end{align*}
    \item If $\kappa=2$, then in $\Pb^*$-probability
     \begin{align*}
        \E^{\mathcal{E}}\left[e^{-\frac{\lambda}{n}L^{(1)}_{\lfloor n/\log n\rfloor}}\Big|L^{(1)}_{\lfloor n/\log n\rfloor}>0\right]\underset{n\to\infty}{\longrightarrow}\phi^{(\kappa)}\big(2\lambda C_{\infty}c_2\big);
    \end{align*}
    \item If $\kappa>2$, then in $\Pb^*$-probability
    \begin{align*}
        \E^{\mathcal{E}}\left[e^{-\frac{\lambda}{n}L^{(1)}_n}\Big|L^{(1)}_n>0\right]\underset{n\to\infty}{\longrightarrow}\phi^{(\kappa)}\big(2\lambda c_0\big).
    \end{align*}
\end{enumerate}
\end{coro}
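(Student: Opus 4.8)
\textbf{Proof plan for Corollary \ref{Corollary1}.}
The plan is to deduce the corollary directly from Theorem \ref{Theorem1} and Theorem \ref{Theorem3}, using the elementary identity linking the conditional Laplace transform to the unconditional one and to the survival probability. Fix $\kappa\in(1,2)$ (the cases $\kappa=2$ and $\kappa>2$ are identical after replacing the scaling sequence $\lfloor n^{\kappa-1}\rfloor$ by $\lfloor n/\log n\rfloor$ or $\lfloor n\rfloor$). Write $m=m(n)=\lfloor n^{\kappa-1}\rfloor$. For every realization of the environment $\mathcal{E}$ and every $\lambda\ge 0$, the trivial decomposition on $\{L^{(1)}_m>0\}$ and its complement gives
\begin{align*}
    \E^{\mathcal{E}}\big[e^{-\frac{\lambda}{n}L^{(1)}_m}\big]
    =\P^{\mathcal{E}}\big(L^{(1)}_m=0\big)+\P^{\mathcal{E}}\big(L^{(1)}_m>0\big)\,\E^{\mathcal{E}}\big[e^{-\frac{\lambda}{n}L^{(1)}_m}\big|L^{(1)}_m>0\big],
\end{align*}
so that
\begin{align*}
    \E^{\mathcal{E}}\big[e^{-\frac{\lambda}{n}L^{(1)}_m}\big|L^{(1)}_m>0\big]
    =1-\frac{1-\E^{\mathcal{E}}\big[e^{-\frac{\lambda}{n}L^{(1)}_m}\big]}{\P^{\mathcal{E}}\big(L^{(1)}_m>0\big)}.
\end{align*}
The strategy is thus to control the numerator and the denominator separately.

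For the denominator, Theorem \ref{Theorem1}(i) gives $n\P^{\mathcal{E}}(L^{(1)}_m>0)\to W_{\infty}\big((\kappa-1)C_{\infty}c_{\kappa}|\Gamma(1-\kappa)|\big)^{-1/(\kappa-1)}$ in $\Pb^*$-probability; abbreviate the right-hand side by $W_{\infty}/\beta_{\kappa}$, where $\beta_{\kappa}:=\big((\kappa-1)C_{\infty}c_{\kappa}|\Gamma(1-\kappa)|\big)^{1/(\kappa-1)}$. For the numerator, I would first pass from $L^{(1)}$ to $L^{(n)}$ by a devil's-staircase/branching argument: under $\P^{\mathcal{E}}$ the excursions of $\X$ away from the root are i.i.d., so $L^{(n)}_m$ is the sum of $n$ independent copies of $L^{(1)}_m$, whence
\begin{align*}
    \E^{\mathcal{E}}\big[e^{-\frac{\lambda}{n}L^{(n)}_m}\big]=\Big(\E^{\mathcal{E}}\big[e^{-\frac{\lambda}{n}L^{(1)}_m}\big]\Big)^{n}.
\end{align*}
Taking logarithms and using $1-\E^{\mathcal{E}}[e^{-\frac{\lambda}{n}L^{(1)}_m}]\to 0$ (which follows since $L^{(1)}_m$ is finite $\P^{\mathcal{E}}$-a.s.\ and $\lambda/n\to0$, so a dominated-convergence plus the tail control from Remark \ref{BoundedInLp} applies), I get
\begin{align*}
    n\big(1-\E^{\mathcal{E}}\big[e^{-\frac{\lambda}{n}L^{(1)}_m}\big]\big)
    \;=\;-\log\E^{\mathcal{E}}\big[e^{-\frac{\lambda}{n}L^{(n)}_m}\big]+o(1).
\end{align*}
Now Theorem \ref{Theorem3}(i) identifies the limit of $n^{-1}L^{(n)}_m=n^{-1}L^{(n)}_{\lfloor an^{\kappa-1}\rfloor}$ with $a=1$, namely $2W_{\infty}\mathcal{Y}^{(\kappa)}_{1/(W_{\infty})^{\kappa-1}}$, in $\Pb^*$-law. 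Since $x\mapsto e^{-\lambda x}$ is bounded and continuous, convergence in $\Pb^*$-law yields $\E^{\mathcal{E}}[e^{-\frac{\lambda}{n}L^{(n)}_m}]\to\E^{\mathcal{E}}[e^{-2\lambda W_{\infty}\mathcal{Y}^{(\kappa)}_{1/(W_{\infty})^{\kappa-1}}}]$ in $\Pb^*$-probability, and the explicit Laplace transform \eqref{LaplaceCSBP} of the CSBP $\mathcal{Y}^{(\kappa)}$ evaluated at $\lambda'=2\lambda W_{\infty}$, $a=0$, $b=1/(W_{\infty})^{\kappa-1}$ gives
\begin{align*}
    \E^{\mathcal{E}}\Big[e^{-2\lambda W_{\infty}\mathcal{Y}^{(\kappa)}_{1/(W_{\infty})^{\kappa-1}}}\Big]
    =\exp\Big(-2\lambda W_{\infty}\big(1+(W_{\infty})^{-(\kappa-1)}(\kappa-1)C_{\infty}\bm{\mathrm{C}}_{\kappa}(2\lambda W_{\infty})^{\kappa-1}\big)^{-1/(\kappa-1)}\Big).
\end{align*}
Recalling $\bm{\mathrm{C}}_{\kappa}=c_{\kappa}|\Gamma(1-\kappa)|$ and $\beta_{\kappa}^{\kappa-1}=(\kappa-1)C_{\infty}c_{\kappa}|\Gamma(1-\kappa)|$, the bracket simplifies: $(W_{\infty})^{-(\kappa-1)}(\kappa-1)C_{\infty}\bm{\mathrm{C}}_{\kappa}(2\lambda W_{\infty})^{\kappa-1}=(2\lambda\beta_{\kappa})^{\kappa-1}$, so the exponent equals $-2\lambda W_{\infty}\big(1+(2\lambda\beta_{\kappa})^{\kappa-1}\big)^{-1/(\kappa-1)}$. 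Hence $-\log\E^{\mathcal{E}}[e^{-\frac{\lambda}{n}L^{(n)}_m}]\to 2\lambda W_{\infty}\big(1+(2\lambda\beta_{\kappa})^{\kappa-1}\big)^{-1/(\kappa-1)}$ in $\Pb^*$-probability.

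Combining the two limits, in $\Pb^*$-probability
\begin{align*}
    \E^{\mathcal{E}}\big[e^{-\frac{\lambda}{n}L^{(1)}_m}\big|L^{(1)}_m>0\big]
    \longrightarrow 1-\frac{2\lambda W_{\infty}\big(1+(2\lambda\beta_{\kappa})^{\kappa-1}\big)^{-1/(\kappa-1)}}{W_{\infty}/\beta_{\kappa}}
    =1-2\lambda\beta_{\kappa}\big(1+(2\lambda\beta_{\kappa})^{\kappa-1}\big)^{-1/(\kappa-1)},
\end{align*}
which is exactly $\phi^{(\kappa)}(2\lambda\beta_{\kappa})=\phi^{(\kappa)}\big(2\lambda((\kappa-1)C_{\infty}c_{\kappa}|\Gamma(1-\kappa)|)^{1/(\kappa-1)}\big)$ by definition \eqref{PhiKappa}. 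The cases $\kappa=2$ and $\kappa>2$ are treated the same way: one uses Theorem \ref{Theorem1}(ii)--(iii) for the denominator, the i.i.d.\ excursion decomposition plus Theorem \ref{Theorem3}(ii)--(iii) for the numerator, and substitutes $\bm{\mathrm{C}}_2=c_2/2$, $\bm{\mathrm{C}}_{\kappa}=c_0/C_{\infty}$ together with $\kappa\land2=2$ into \eqref{LaplaceCSBP}; the same algebraic simplification produces $\phi^{(\kappa)}(2\lambda C_{\infty}c_2)$ and $\phi^{(\kappa)}(2\lambda c_0)$ respectively.

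\textbf{Main obstacle.} The delicate point is the transfer step $n\big(1-\E^{\mathcal{E}}[e^{-\frac{\lambda}{n}L^{(1)}_m}]\big)=-\log\E^{\mathcal{E}}[e^{-\frac{\lambda}{n}L^{(n)}_m}]+o(1)$: the identity $\E^{\mathcal{E}}[e^{-\frac{\lambda}{n}L^{(n)}_m}]=(\E^{\mathcal{E}}[e^{-\frac{\lambda}{n}L^{(1)}_m}])^{n}$ is exact, but to replace $-n\log(1-\eps_n)$ by $n\eps_n$ one needs $n\eps_n$ to stay bounded, i.e.\ an \emph{a priori} bound of the form $n\big(1-\E^{\mathcal{E}}[e^{-\frac{\lambda}{n}L^{(1)}_m}]\big)=O(1)$ in $\Pb^*$-probability. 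This is where Remark \ref{BoundedInLp} (equivalently, the $\mathrm{L}^r$-boundedness of $\P^{\mathcal{E}}(L^{(1)}_m>0)/\P(L^{(1)}_m>0)$ and the tail estimates behind Theorem \ref{Theorem1}) is needed; one writes $1-\E^{\mathcal{E}}[e^{-\frac{\lambda}{n}L^{(1)}_m}]\le \frac{\lambda}{n}\E^{\mathcal{E}}[L^{(1)}_m\wedge(n/\lambda)]+\P^{\mathcal{E}}(L^{(1)}_m>0)$-type splittings and controls the truncated first moment of $L^{(1)}_m$ using the heavy-tail index $\kappa\land2>1$ (so that $\E^{\mathcal{E}}[(L^{(1)}_m)^{1}\wedge K]$ grows slower than $K$). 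Once this uniform integrability is in place the rest is bookkeeping.
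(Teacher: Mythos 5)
Your proposal is correct and follows essentially the same route as the paper: the identity $\E^{\mathcal{E}}[e^{-\frac{\lambda}{n}L^{(1)}_m}]^n=\E^{\mathcal{E}}[e^{-\frac{\lambda}{n}L^{(n)}_m}]$, Theorem \ref{Theorem3} plus the explicit CSBP Laplace transform for the numerator, Theorem \ref{Theorem1} for the denominator, and the same algebra yielding $\phi^{(\kappa)}(2\lambda\beta_{\kappa})$. The only remark worth making is that your ``main obstacle'' is not really one: the bound $1-\E^{\mathcal{E}}[e^{-\frac{\lambda}{n}L^{(1)}_m}]\leq\P^{\mathcal{E}}(L^{(1)}_m>0)=O(1/n)$ is immediate (since $1-e^{-x}\leq\un_{\{x>0\}}$), so no truncated-moment or $\mathrm{L}^r$ argument is needed to justify replacing $-n\log(1-\eps_n)$ by $n\eps_n$.
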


\noindent Corollary \ref{Corollary1} says in particular that for any $\kappa>1$, the sequences $(\P^{\mathcal{E}}(L^{(1)}_{\lfloor n^{\kappa\land 2-1}\rfloor}/n\in\cdot\;|L^{(1)}_{\lfloor n^{\kappa\land 2-1}\rfloor}>0))_{n\geq 1}$ if $\kappa\not=2$ and $(\P^{\mathcal{E}}(L^{(1)}_{\lfloor n/\log n\rfloor}/n\in\cdot\;|L^{(1)}_{\lfloor n/\log n\rfloor}>0))_{n\geq 1}$ if $\kappa=2$ both converge towards the law of a random variable which is proportional to the random variable with Laplace transform given by $\phi^{(\kappa)}$. This is strongly reminiscent of results previously obtained for critical Galton-Watson processes with one initial individual, see for instance \cite{yaglom1947}, \cite{KestenNeySpitzer}, \cite{Slack1968} and \cite{Slack1972}.

\vspace{0.2cm}

\vspace{0.3cm}

\noindent The rest of the paper is organized as follows: Section \ref{SectionProofs} is dedicated to the proofs of our main results. At the beginning of Section \ref{SectionProofs}, we give a few well-known facts about $\X$ and we introduce the reduced range (see section \ref{SectionReducedProcesses}) which turns out to be a very interesting tool to prove Theorem \ref{Theorem1} (see section \ref{ProofTheorem1}) and Theorem \ref{Theorem3} (see section \ref{ProofTheorem3}). In order to prove our proposition properly, we present a few preliminary results (see section \ref{SectionPR}). After this, the proof of Proposition \ref{UnifLawMultyMart} (see section \ref{ProofsOfPropositions}) is presented. We finally end the paper with the proof of Corollary \ref{Corollary1} (see section \ref{ProofCorollary}).

\section{Proofs of the results}\label{SectionProofs}

\noindent In all this section, the Assumptions \ref{Assumption1} and \ref{Assumption2} hold. 

\vspace{0.2cm}

\noindent We first present a few well-known facts about the edge local times (see for instance Lemma 3.1 in \cite{AndAKHightPotential}). For any $x\in\T$ 
\begin{align}\label{ProbaAlpha}
    \P^{\mathcal{E}}(\VectCoord{N}{1}_x\geq 1)=\frac{e^{-V(x)}}{H_x}\;\;\textrm{ and }\;\;\P^{\mathcal{E}}_{x^*}(\VectCoord{N}{1}_x\geq 1)=1-\frac{1}{H_x},
\end{align}
where $H_x:=\sum_{e\leq w\leq x}e^{V(w)-V(x)}$. Moreover, under $\P^{\mathcal{E}}_{x^*}$, $\VectCoord{N}{1}_x$ follows a Geometric law on $\N$ with probability of success $1-\P^{\mathcal{E}}_{x^*}(\VectCoord{N}{1}_x\geq 1)$. In particular, $\E^{\mathcal{E}}[N^{(1)}_x]=e^{-V(x)}$.

\subsection{Reduced range}\label{SectionReducedProcesses}

We introduce what we call the reduced range. The reduced range and its associated processes will be particularly useful in the proofs of Theorem \ref{Theorem1} (see section \ref{ProofTheorem1}) and Theorem \ref{Theorem3} (see section \ref{ProofTheorem3}). Roughly speaking, the reduced range associated with $\mathcal{R}^{(p)}$ is a range for which we get rid of vertices that do not give a sufficient contribution. We will see that it is easier to deal with a reduced \textit{multi-type additive martingale} than the actual one. For any $\ell\in\N$ and $p\in\N^*$, define the set $\mathcal{B}^{(p)}_{\ell}$ of vertices $x\in\T$ in generation larger than $\ell$ such that the oriented edge $(x^*,x)$ is visited exactly once before $\tau^p$ and for any ancestor $z\not=x$ of $x$ in generation larger than $\ell$, the oriented edge $(z^*,z)$ is visited at least twice before $\tau^p$, that is
\begin{align}\label{SetRoots}
    \mathcal{B}^{(p)}_{\ell}:=\big\{x\in\T,\; |x|>\ell;\; N_x^{(p)}=1\textrm{ and }\min_{\ell<i<|x|}N_{x_i}^{(p)}\geq 2\big\},
\end{align}
with the convention $\mathcal{B}^{(0)}_{\ell}=\varnothing$. This set was first introduced by E. Aïdékon and L. de Raphélis in \cite{AidRap}. $B^{(p)}_{\ell}$ stands for the cardinal of $\mathcal{B}^{(p)}_{\ell}$. We know thanks to Lemma 2.6 in \cite{AK23LocalTimes} that for any $r>0$, $\Pb$-almost surely, $\lim_{n\to\infty}\P^{\mathcal{E}}\big((\mathcal{B}^{p}_{\lfloor(\log n)^2\rfloor})_{p\leq\lfloor n^{r}\rfloor}\textrm{ is non-decreasing},\; \sup_{x\in\mathcal{B}^{(\lfloor n^r\rfloor)}_{\lfloor(\log n)^2\rfloor}}|x|\leq\lfloor(\log n)^3\rfloor\big)=1$ where $(\mathcal{B}^{p}_{\lfloor(\log n)^2\rfloor})_{p\leq\lfloor n^{r}\rfloor}$ non-decreasing means here that $\mathcal{B}^{p}_{\lfloor(\log n)^2\rfloor}\subset \mathcal{B}^{p'}_{\lfloor(\log n)^2\rfloor}$ for any $1\leq p\leq p'\leq\lfloor n^r\rfloor$. Moreover, by Lemma 2.7 in \cite{AK23LocalTimes} that $\Pb^*$-almost surely
\begin{align*}
    \lim_{n\to\infty}\P^{\mathcal{E}}\Big(\forall\; |y|\geq\lfloor(\log n)^3\rfloor,\exists\; x\in\mathcal{B}^{(\lfloor n^r\rfloor)}_{\lfloor(\log n)^2\rfloor}:\; x\leq y\Big)=1.
\end{align*}
Also, thanks to Lemma 2.9 in \cite{AK23LocalTimes}, we know that for any $r>0$, in $\P$-probability (and $\P^*$-probability)
\begin{align}\label{ConvCardRoot}
    \frac{1}{n^r}B^{(\lfloor n^r\rfloor)}_{\lfloor(\log n)^2\rfloor}\underset{n\to\infty}{\longrightarrow}W_{\infty}.
\end{align}
Let $\mathcal{B}^{(n)}:=\mathcal{B}^{(n^2)}_{\lfloor(\log n)^2\rfloor}$, $\Bar{\mathcal{B}}^{(n)}:=\mathcal{B}^{(n)}_{\lfloor(\log n)^2\rfloor}$ and denote by $B^{(n)}$ the cardinal of $\mathcal{B}^{(n)}$ and $\Bar{B}^{(n)}$ the cardinal of $\Bar{\mathcal{B}}^{(n)}$. For any $i\in\{1,\ldots,B^{(n)}\}$ let $e_i^{(n)}$ (to simplify, we will use the notation $e_i$ instead) be the $i$-th element of $\mathcal{B}^{(n)}$ visited by the random walk $\X$ and define the tree $\mathbfcal{R}^{(n)}_i$ rooted at $e_i$ by
\begin{align}\label{ReducedRange}
    \mathbfcal{R}^{(n)}_i:=\{x\in\T;\; x\geq e_i\}\cap\mathcal{R}^{(n^2)},
\end{align}
where we recall that $\mathcal{R}^{(n^2)}=\{x\in\T;\; N_x^{(n^2)}\geq 1\}$. By definition, the oriented edge $((e_i)^*,e_i)$ is visited exactly once by the random walk $\X$ up to time $\tau^{n^2}$ so one should think of $\mathbfcal{R}^{(n)}_i$ as the range of a random walk on $\T$ up to its return time to the edge $((e_i)^*,e_i)$. Precisely, for any $i\in\{1,\ldots,B^{(n)}\}$ and $x\in\mathbfcal{R}^{(n)}_i$, let $$\bm{t}_x^{(n)}:=N_x^{(n^2)}.$$ Let $((\mathbfcal{R}^{(n)}_i,\; (\bm{t}_x^{(n)};\; x\in\mathbfcal{R}^{(n)}_i));\; i>B^{(n)})$, be, under $\P$, a collection of\textrm{ i.i.d }copies of $(\mathcal{R}^{(1)},\; (N_x^{(1)},\; x\in\mathcal{R}^{(1)}))$, independent of any random variable we have introduced so far. We still denote by $e_i$ the root of $\mathbfcal{R}_i^{(n)}$ when $i>B^{(n)}$. Therefore, $((\mathbfcal{R}^{(n)}_i,\; (\bm{t}_x^{(n)};\; x\in\mathbfcal{R}^{(n)}_i));\; i\geq 1)$ is a collection of\textrm{ i.i.d }copies of the multi-type Galton-Watson tree $(\mathcal{R}^{(1)},\; (N_x^{(1)},\; x\in\mathcal{R}^{(1)}))$ with initial type $1$, see Fact \ref{FactGWMulti}. \\
The idea of considering this new range is due to E. Aïdékon and L. de Raphelis in the paper \cite{AidRap} and is significantly easier to deal with than the actual range. \\
Define $\mathbfcal{F}^{(n)}:=(\mathbfcal{R}^{(n)}_i)_{i\geq 1}$, denote by $\bm{F}^{(n)}(q)$ the number of vertices in the first $q$ trees of the forest $\mathbfcal{F}^{(n)}$ and $\bm{H}^{(n)}$ stands for the height function of the forest $\mathbfcal{F}^{(n)}$ as defined below: \\
let $\mathfrak{T}$ be a tree rooted at $u(0)$ and $\#\mathfrak{T}$ stands for the number of vertices of $\mathfrak{T}$ and assume that $\#\mathfrak{T}<\infty$. For any $i\in\{0,\ldots,\#\mathfrak{T}-1\}$, denote by $u(i)$ the $(i+1)$-th vertex of the tree $\mathfrak{T}$ for the depth first search order. We then define $H^{\mathfrak{T}}:\{0,\ldots,\mathfrak{T}\}\to\N$ to be the height function of the tree $\mathfrak{T}$, with the convention $H^{\mathfrak{T}}(\#\mathfrak{T})=0$. Then, for any $i\in\{0,\ldots,\#\mathfrak{T}-1\}$, $H^{\mathfrak{T}}(i)$ is the generation of the vertex $u(i)$ in the tree $\mathfrak{T}$. We extend this definition to a collection of trees (or forest) $\mathfrak{F}=(\mathfrak{T}_j)_{j\geq 1}$: we define the height function $H^{\mathfrak{F}}:\N\to\N$ of the forest $\mathfrak{F}$ by setting: for any $i\in\N$, $H^{\mathfrak{F}}(i)=H^{\mathfrak{T}_j}(i-\sum_{l<j}\#\mathfrak{T}_l)$ if and only if $\sum_{l<j}\#\mathfrak{T}_l\leq i<\sum_{l\leq j}\#\mathfrak{T}_l$, for some $j\geq 1$, with the convention $\#\mathfrak{T}_0=0$.

\vspace{0.2cm}

\noindent Before stating an important fact, we need a few more definitions. Let $\kappa>1$ and define $Y_{\kappa}:=(Y_{{\kappa},s};\; s\geq 0)$ to be a $(\kappa\land 2)$-stable Lévy process with no negative jump and Laplace exponent $\lambda\in[0,\infty)\mapsto \bm{\mathrm{C}}_{\kappa}\lambda^{\kappa\land 2}$ (see \eqref{DefCkappa} for the definition of $\bm{\mathrm{C}}_{\kappa}$) such that, almost surely, $\inf_{s\geq 0}Y_{\kappa,s}=-\infty$ and denote by $\bm{H}_{\kappa}:=(\bm{H}_{\kappa}(s);\; s\geq 0)$ the continuous-time height process associated with $Y_{\kappa}$. For any $s\geq 0$, $\bm{H}_{\kappa}(s)$ is defined as the "local time measure" of the set $\{t\leq s;\; Y_{\kappa,t}=\inf_{r\in[t,s]}Y_{\kappa,r}\}$, see Definition 1.2.1 in \cite{DuqLeGall} for a rigorous definition. Besides, by Theorem 1.4.3 in \cite{DuqLeGall}, $\bm{H}_{\kappa}$ has continuous paths almost-surely. One can notice that for any $\kappa\geq 2$, $\bm{H}_{\kappa}$ is distributed as $(2/\bm{\mathrm{C}}_{\kappa})^{1/2}|B|$, with $B$ a standard Brownian motion. There is a strong link between $\bm{H}^{(n)}$ and $\bm{H}_{\kappa}$, see Fact \ref{JointConvForest}. \\
For any $t\geq 0$, define $\tau_{-t}(Y_{\kappa})$ to be the first hitting time of $-t$ by $Y_{\kappa}$, that is
\begin{align*}
    \tau_{-t}(Y_{\kappa}):=\inf\{s>0;\; Y_{\kappa,s}=-t\},
\end{align*}
which is finite almost surely. Let $\bm{c}_{\infty}:=\Eb[(\sum_{j\geq 1}e^{-S_j})^{-1}]$, see \eqref{LoiRW} for the definition of $(S_j)$.
\begin{Fact}\label{JointConvForest}
For any $\kappa>1$
\begin{enumerate}[label=(\roman*)]
    \item\label{JointConvForest1} if $\kappa\not=2$, then in $\Pb^*$-law for the Skorokhod product topology on $D([0,\infty),\R)\times D([0,\infty),\R)$
    \begingroup
    \addtolength\jot{7pt}
    \begin{align*}
        &\left(\Big(\frac{C_{\infty}}{n^{\kappa\land 2-1}}\bm{H}^{(n)}(\lfloor sn^{\kappa\land 2}\rfloor);\; s\geq 0\Big),\;\Big(\frac{C_{\infty}}{n^{\kappa\land 2}}\bm{F}^{(n)}(\tn);\; t\geq 0\Big)\right) \\ & \underset{n\to\infty}{\longrightarrow}\Big(\big(\bm{H}_{\kappa}(sC_{\infty}/\bm{c}_{\infty});\; s\geq 0\big),\; \big(\bm{c}_{\infty}\tau_{-t}(Y_{\kappa});\; t\geq 0\big)\Big); 
    \end{align*}
    \endgroup
     \item\label{JointConvForest2} if $\kappa=2$, then in $\Pb^*$-law for the Skorokhod product topology on $D([0,\infty),\R)\times D([0,\infty),\R)$
    \begingroup
    \addtolength\jot{7pt}
    \begin{align*}
        &\left(\Big(2C_{\infty}\frac{\log n}{n}\bm{H}^{(n)}(\lfloor sn^2/\log n^2\rfloor);\; s\geq 0\Big);\; \Big(\frac{2C_{\infty}\log n}{n^{2}}\bm{F}^{(n)}(\tn);\; t\geq 0\Big)\right) \\ & \underset{n\to\infty}{\longrightarrow}\Big(\big(\bm{H}_{\kappa}(sC_{\infty}/\bm{c}_{\infty});\; s\geq 0\big);\; \big(\bm{c}_{\infty}\tau_{-t}(Y_{2});\; t\geq 0\big)\Big).
    \end{align*}
    \endgroup
\end{enumerate}
\end{Fact}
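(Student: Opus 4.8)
The plan is to recover both coordinate processes as continuous functionals of a single exploration of the i.i.d.\ forest $\mathbfcal{F}^{(n)}$, to prove the invariance principle at the level of that exploration, and to conclude by a continuous-mapping argument. Up to the indicated time-change, the height-function marginal is essentially a restatement of the scaling limit of $(|X_{\tn}|)$ obtained in \cite{AidRap} for $\kappa>2$ and in \cite{deRaph1} for $\kappa\in(1,2]$, so the genuinely new point is to produce the two limits \emph{jointly}, which is why coding both of them from the same exploration is the natural route.

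I would first set up the reduction. For each $n$, $\mathbfcal{F}^{(n)}=(\mathbfcal{R}^{(n)}_i)_{i\ge1}$ is a sequence of i.i.d.\ copies of the multi-type Galton--Watson tree $(\mathcal{R}^{(1)},(N^{(1)}_x;\,x\in\mathcal{R}^{(1)}))$ with initial type $1$, so $\bm{F}^{(n)}(p)=\sum_{i=1}^p\#\mathbfcal{R}^{(n)}_i$ is a random walk in $p$ with i.i.d.\ steps distributed as $\#\mathcal{R}^{(1)}$, while $\bm{H}^{(n)}$ is the height functional read off the concatenated exploration of the forest, whose returns to $0$ occur exactly at the positions recorded by $\bm{F}^{(n)}$. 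Thus $(\bm{H}^{(n)},\bm{F}^{(n)})$ is a deterministic image of the pair (exploration process, sequence of tree sizes), and it suffices to prove the joint convergence of that pair and then to pass through the two maps ``height functional'' and ``first-passage functional'', which are continuous, for the Skorokhod product topology, on a set of full measure under the limit law: the limit $Y_{\kappa}$ has no negative jumps and, since $\inf_{s\ge0}Y_{\kappa,s}=-\infty$, hits and immediately crosses below every level $-t$. This is the standard coding machinery for Galton--Watson forests and stable trees, see \cite{DuqLeGall}.

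Next, the mass process. The step law $\#\mathcal{R}^{(1)}$ lies in the domain of attraction of a positive $1/(\kappa\wedge2)$-stable law --- in line with the scaling limit of $\tau^n/n^{\kappa\wedge2}$ (resp.\ $(\log n)\tau^n/n^2$ when $\kappa=2$) towards $\mathfrak{C}_{\kappa}(W_{\infty})^{\kappa\wedge2}\bm{\tau}_{\kappa}$ recalled in the introduction, and, for $\kappa\in(1,2)$, with the tail $c_{\kappa}r^{-\kappa}$ of Proposition~2 of \cite{deRaph1}. Consequently $(n^{-(\kappa\wedge2)}\bm{F}^{(n)}(\tn);\,t\ge0)$ converges, up to a multiplicative constant, to a positive $1/(\kappa\wedge2)$-stable subordinator; since $t\mapsto\tau_{-t}(Y_{\kappa})$ is exactly such a subordinator by fluctuation theory for the spectrally positive stable Lévy process $Y_{\kappa}$ with Laplace exponent $\bm{\mathrm{C}}_{\kappa}\lambda^{\kappa\wedge2}$, this identifies the limit once the normalising constant is computed. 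That computation is a many-to-one estimate on the range of $\X$, which produces $\bm{c}_{\infty}=\Eb[(\sum_{j\ge1}e^{-S_j})^{-1}]$ and the auxiliary constant $C_{\infty}=\Eb[(\sum_{j\ge0}e^{-S_j})^{-2}]$ --- morally, the asymptotic number of reduced-range vertices per generation --- hence the time-changes $s\mapsto sC_{\infty}/\bm{c}_{\infty}$ and $t\mapsto\bm{c}_{\infty}\tau_{-t}$ appearing in the statement; the logarithmic correction at $\kappa=2$ is the usual consequence of the step law being only barely in the domain of attraction of the positive $1/2$-stable law.

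Finally, the height process and the assembly. The marginal convergence of $\bm{H}^{(n)}$, rescaled by $C_{\infty}/n^{\kappa\wedge2-1}$ and time-changed by $C_{\infty}/\bm{c}_{\infty}$, towards $\bm{H}_{\kappa}$ is essentially Theorem~6.1 of \cite{AidRap} for $\kappa>2$ and Theorem~1 of \cite{deRaph1} for $\kappa\in(1,2]$: those works establish the scaling limit of $(|X_{\tn}|)$ precisely by identifying it, after the time-change encoded by $C_{\infty}$ and $\bm{c}_{\infty}$, with the height function of this reduced forest, and at $\kappa=2$ the same identification carries the extra $(\log n)$-factor. To upgrade the two marginal limits to the joint one, observe that $\bm{H}^{(n)}$ and $\bm{F}^{(n)}$ are continuous images of the single exploration process whose convergence, jointly with the tree-size process, underlies both marginals, so the continuous-mapping theorem gives the joint statement. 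Passing from the reduced forest to the literal formulation requires replacing the true range $\mathcal{R}^{(n^2)}$ by $\mathbfcal{F}^{(n)}$, which is where the estimates recalled in Section~\ref{SectionReducedProcesses} enter --- monotonicity of the families of roots, the cut-off at generation $\lfloor(\log n)^3\rfloor$, the fact that every deep vertex lies below a root, and \eqref{ConvCardRoot}, i.e.\ $B^{(n)}/n^2\to W_{\infty}$ --- so that the substitution costs only an error that vanishes after rescaling and the two indexations become matched. I expect the real obstacle to be not the marginal limit theorems (reformulations of \cite{AidRap,deRaph1}) but the joint Skorokhod convergence: building one exploration of the reduced forest whose continuous functionals are simultaneously $\bm{H}^{(n)}$ and $\bm{F}^{(n)}$, controlling uniformly the boundary layer near generation $\lfloor(\log n)^2\rfloor$ and the mismatch between the walk's visiting order of the roots and the depth-first order behind $\bm{H}^{(n)}$, and verifying that $\bm{H}_{\kappa}$ and $\tau_{-\cdot}(Y_{\kappa})$ are the height process and the inverse passage process of the \emph{same} Lévy process $Y_{\kappa}$.
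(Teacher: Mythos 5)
Your overall shape is right---code both processes from one exploration, use the Duquesne--Le Gall joint convergence of the height process and the first-passage functional, and conclude by continuous mapping---but there is a genuine gap at exactly the point you yourself flag as ``the real obstacle'': you never construct the single object from which $\bm{H}^{(n)}$ and $\bm{F}^{(n)}$ can \emph{jointly} be read off as (asymptotically) continuous functionals. The forest $\mathbfcal{F}^{(n)}$ is a forest of \emph{multi-type} Galton--Watson trees, so its depth-first Lukasiewicz-type path does not stand in the standard relation to its height function, and neither marginal limit theorem in \cite{AidRap,deRaph1} is proved by applying \cite{DuqLeGall} directly to $\mathbfcal{F}^{(n)}$. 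The paper's argument goes through the single-type skeleton forest $\mathbfcal{F}^{(n,1)}=(\mathbfcal{T}^{(n,1)}_i)_{i\ge1}$ built from the vertices of type $1$ (offspring $\bm{N}^{(n,1)}$, critical, with tail $c_\kappa r^{-\kappa}$), for which Corollary 2.5.2 of \cite{DuqLeGall} yields the joint convergence of $\bigl(n^{-(\kappa-1)}\bm{H}^{(n,1)}(\lfloor sn^{\kappa}\rfloor),\, n^{-\kappa}\bm{F}^{(n,1)}(\tn)\bigr)$ to $\bigl(\bm{H}_{\kappa}(s),\tau_{-t}(Y_{\kappa})\bigr)$; the two transfer identities $\sup_{s\le M}n^{-(\kappa-1)}\bigl|\bm{H}^{(n)}(\lfloor sn^{\kappa}\rfloor)-C_{\infty}^{-1}\bm{H}^{(n,1)}(\varphi(\lfloor sn^{\kappa}\rfloor))\bigr|\to0$ (from \cite{deRaph1}, with $\varphi(m)/m\to C_{\infty}/\bm{c}_{\infty}$) and $\bm{F}^{(n)}(\tn)=\tn+\sum_{j=0}^{\bm{F}^{(n,1)}(\tn)-1}\bm{M}^{(n,1)}_j$ with $\E[\bm{M}^{(n,1)}_1]=\bm{c}_{\infty}/C_{\infty}$ and a law of large numbers are what carry the joint limit back to $(\bm{H}^{(n)},\bm{F}^{(n)})$ and are the sole source of the constants $C_{\infty}$ and $\bm{c}_{\infty}$ in the statement. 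Your alternative route for the mass marginal (stable CLT for the i.i.d.\ sums $\sum_i\#\mathbfcal{R}^{(n)}_i$) would at best reprove the $\bm{F}^{(n)}$ marginal from \cite{AK23LocalTimes}; it does not couple it with $\bm{H}^{(n)}$, which is the whole content of the Fact.

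A second, smaller omission: everything you argue is annealed (i.i.d.\ trees under $\P$), whereas the statement is a convergence in $\Pb^*$-law, i.e.\ quenched along subsequences. The paper obtains this by the second-moment concentration argument of \cite{AidRap} (showing $\E^{\mathcal{E}}[\Psi(\cdot)]$ concentrates around $\E[\Psi(\cdot)]$ by splitting over pairs of root sets with and without ancestral relations), which is recalled in \textbf{Step 3} of the proof of Theorem \ref{Theorem3}; your sketch should invoke it explicitly rather than fold it into the range-versus-reduced-range substitution.
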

\noindent The convergences of $(\frac{C_{\infty}}{n^{\kappa\land 2-1}}\bm{H}^{(n)}(\lfloor sn^{\kappa\land 2}\rfloor);\; s\geq 0)$ when $\kappa\not=2$, of $(2C_{\infty}\frac{\log n}{n}\bm{H}^{(n)}(\lfloor sn^2/\log n^2\rfloor);\; s\geq 0)$ when $\kappa=2$ and of $(\frac{C_{\infty}}{n^{\kappa\land 2}}\bm{F}^{(n)}(\tn);\; t\geq 0)$ when $\kappa\not=2$, of $(\frac{2C_{\infty}\log n}{n^{2}}\bm{F}^{(n)}(\tn);\; t\geq 0)$ when $\kappa=2$ have been obtained separately, the first two in \cite{AidRap} when $\kappa>2$ and in \cite{deRaph1} when $\kappa\in(1,2]$ and the last two in \cite{AK23LocalTimes}. Let us explain briefly why we actually have the joint convergence when $\kappa\in(1,2)$. For any $i\geq 1$ and any $x\in\mathbfcal{R}^{(n)}_i$, define
\begin{align*}
    G^1_i(e_i):=0\;\;\textrm{ and }\;\;\forall\; x\in\mathbfcal{R}^{(n)}_i\setminus\{e_i\},\; G^1_i(x):=\sum_{e_i\leq z<x}\un_{\{\bm{t}^{(n)}_z=1\}}.
\end{align*}
Define the tree $\mathbfcal{T}_i^{(n,1)}$ as follows: for any $k\geq 0$, the generation $k$ of $\mathbfcal{T}_i^{(n,1)}$ is made up of vertices in $x\in\mathbfcal{R}^{(n)}_i$ such that $\bm{t}^{(n)}_x=1$ and $G^1_i(x)=k$ by keeping the initial order on $\mathbfcal{R}^{(n)}_i$ (in the sense that the $(j+1)$-th vertex of $\mathbfcal{R}^{(n)}_i$ such that $\bm{t}^{(n)}_x=1$ is the $(j+1)$-th vertex of $\mathbfcal{T}^{(n,1)}_i$) so that the genealogical structure is preserved. In particular, $\mathbfcal{T}_i^{(n,1)}$ is rooted at $e_i$. By definition, $(\mathbfcal{T}_i^{(n,1)})_{i\geq 1}$ is a collection of\textrm{ i.i.d } random trees. Moreover, if $\bm{N}^{(n,1)}:=\sum_{x\in\mathbfcal{R}^{(n)}_1}\un_{\{\bm{t}^{(n)}_x=1,\; G^1_1(x)=1\}}$ denotes the number of vertices in the generation $1$ of $\mathbfcal{T}_1^{(n,1)}$, then $\E[\bm{N}^{(n,1)}]=1$, $\lim_{r\to\infty}r^{\kappa}\P(\bm{N}^{(n,1)}>r)=c_{\kappa}\in(0,\infty)$ for all $\kappa\in(1,2)$, see \eqref{ConstantTail} and $\mathbfcal{F}^{(n,1)}:=(\mathbfcal{T}^{(n,1)}_i)_{i\geq 1}$ is a forest of\textrm{ i.i.d }Galton-Watson trees with offspring distribution $\bm{N}^{(n,1)}$. If we denote by $\bm{H}^{(n,1)}$ the height function of the forest $\mathbfcal{F}^{(n,1)}$, then it is proved in \cite{deRaph1} that for any $M>0$, in $\P$-probability
\begin{align*}
    \sup_{s\in[0,M]}\frac{1}{n^{\kappa-1}}\Big|\bm{H}^{(n)}(\lfloor sn^{\kappa}\rfloor)-\frac{1}{C_{\infty}}\bm{H}^{(n,1)}\big(\varphi(\lfloor sn^{\kappa}\rfloor)\big)\Big|\underset{n\to\infty}{\longrightarrow}0,
\end{align*}
for some random non-decreasing sequence $(\varphi(m))_{m\geq 1}$ such that in $\P$-probability, $\varphi(m)/m\to C_{\infty}/\bm{c}_{\infty}$. Now, for any $j\geq 0$, denote by $x^1(j)$ the $(j+1)$-th vertex (for the depth first search order) of the forest $\mathbfcal{F}^{(n)}$ such that $\bm{t}^{(n)}_{x^1(j)}=1$. 
For any $x\in\mathbfcal{R}^{(n)}_i$, define the set $\mathbfcal{C}^{(n,1)}_x:=\{z\in\mathbfcal{R}^{(n)}_i;\; G^1_i(z)=G^1_i(x)+1\}$. If we let 
\begin{align*}
    \bm{M}^{(n,1)}_j:=\sum_{y\in\mathbfcal{F}^{(n)}}\un_{\{y>x^1(j)\}}-\sum_{z\in\mathbfcal{C}_{x^1(j)}^{(n,1)}}\;\sum_{y\in\mathbfcal{F}^{(n)}}\un_{\{y>z\}},  
\end{align*}
then the collection $(\bm{M}_j^{(n,1)})_{j\geq 1}$ is made up of\textrm{ i.i.d }random variables and $\E[\bm{M}_1^{(n,1)}]=\bm{c}_{\infty}/C_{\infty}$. We refer to Proposition 2 in \cite{deRaph1} for the computation of $\E[\bm{N}^{(n,1)}]$ and $\E[\bm{M}_1^{(n,1)}]$. In particular, the law of large numbers yields, in $\P$-probability, $\lim_{m\to\infty}\frac{1}{m}\sum_{j=0}^{m-1}\bm{M}_j^{(n,1)}=\bm{c}_{\infty}/C_{\infty}$. In \cite{AK23LocalTimes}, it is noticed that 
\begin{align*}
    \bm{F}^{(n)}(\tn)=\tn+\sum_{j=0}^{\bm{F}^{(n,1)}(\tn)-1}\bm{M}_j^{(n,1)},
\end{align*}
where $\bm{F}^{(n,1)}(\tn)$ is the number of vertices in the first $\tn$ trees of the forest $\mathbfcal{F}^{(n,1)}$. One can note that $\bm{F}^{(n,1)}(\tn)=\inf{\{j\geq 0;\; \bm{V}^{(n,1)}_j=-\tn\}}$ where $\bm{V}^{(n,1)}_0=0$ and for any $j\geq 1$, $$\bm{V}^{(n,1)}_j:=\sum_{i=0}^{j-1}\big(\bm{N}^{(n,1)}_i-1\big),$$ and $\bm{N}^{(n,1)}_i$ is the number of children of the $(i+1)$-th vertex (for the depth first search order) of the random forest $\mathbfcal{F}^{(n,1)}$. In other words, $\bm{V}^{(n,1)}_j$ is a sum of $j$\textrm{ i.i.d }copies of $\bm{N}^{(n,1)}$, $(\bm{V}^{(n,1)}_j)_{j\geq 0}$ denotes the Lukasiewicz path associated with the random forest $\mathbfcal{F}^{(n,1)}$ and Corollary 2.5.2 in \cite{DuqLeGall} yields, in law under $\P$ for the Skorokhod product topology on $D([0,\infty),\R)\times D([0,\infty),\R)$
\begin{align*}
    &\left(\Big(\frac{1}{n^{\kappa-1}}\bm{H}^{(n,1)}(\lfloor sn^{\kappa}\rfloor);\; s\geq 0\Big),\;\Big(\frac{1}{n^{\kappa}}\bm{F}^{(n,1)}(\tn);\; t\geq 0\Big)\right)\underset{n\to\infty}{\longrightarrow}\Big(\big(\bm{H}_{\kappa}(s);\; s\geq 0\big),\; \big(\tau_{-t}(Y_{\kappa});\; t\geq 0\big)\Big),
\end{align*}
thus giving the convergence in Fact \ref{JointConvForest} for the annealed probability $\P$. To obtain the convergence in $\Pb^*$-law, we use the same procedure as in the proof of Theorem 1.1 in \cite{AidRap}. This argument is recalled in the \textbf{Step 3} of the proof of Theorem \ref{Theorem3}.

\vspace{0.2cm}

\noindent Let us now introduce the reduced \textit{multi-type additive martingale}: 
\begin{align}\label{RedAddMart}
    \bm{Z}^{(n)}_{k}(q):=\sum_{i=1}^{q}\bm{Z}^{(n,i)}_k\;\;\textrm{ where }\;\;\bm{Z}^{(n,i)}_k:=\sum_{x\in\mathbfcal{R}^{(n)}_i}\un_{\{|x|_i=k\}}\bm{t}^{(n)}_x,
\end{align}
and for any $x\in\mathbfcal{R}^{(n)}_i$, $|x|_i$ is the generation of the vertex $x$ relative to the tree $\mathbfcal{R}^{(n)}_i$. In particular, $|e_i|_i=0$ and for any $i\in\{1,\ldots,B^{(n)}\}$, $|x|=|x|_i+|e_i|$. Also introduce the following reduced local time 
\begin{align}\label{RedLocalTime}
    \bm{L}^{(n)}_{k}(q):=\sum_{i=1}^{q}\bm{L}^{(n,i)}_k\;\;\textrm{ where }\;\;\bm{L}^{(n,i)}_{k}:=\bm{Z}^{(n,i)}_{k}+\bm{Z}^{(n,i)}_{k+1}.  
\end{align}
The benefit of introducing the reduced range and its associated processes might not be obvious at first but it turns out to be very useful especially to obtain our annealed results. Indeed, we can always write the range $\mathcal{R}^{(n)}$ as the union $\bigcup_{j=1}^n\{x\in\T;\; N_x^{(j)}-N_x^{(j-1)}\geq 1\}$ of\textrm{ i.i.d }copies of $\mathcal{R}^{(1)}$ under the quenched probability $\P^{\mathcal{E}}$ thanks to the strong Markov property. But working under the annealed probability $\P$ is more convenient for the following reason: by Fact \ref{FactGWMulti}, under the annealed probability $\P$, $$(\VectCoord{\mathcal{R}}{1},(\VectCoord{N}{1}_x;\; x\in\VectCoord{\mathcal{R}}{1}))$$ is a multi-type Galton-Watson tree (but in general, this is not the true under $\P^{\mathcal{E}}$). Moreover, the trees $\{x\in\T;\; N_x^{(j)}-N_x^{(j-1)}\geq 1\}$ for $j=1,\ldots, n$ are (in general) not\textrm{ i.i.d copies of }$\mathcal{R}^{(1)}$ under $\P$. In order to fix this, that is be able to take advantage of both independence and the properties of multi-type Galton-Watson trees under the annealed probability $\P$, the reduced range is introduced: $((\mathbfcal{R}^{(n)}_i,\; (\bm{t}_x^{(n)};\; x\in\mathbfcal{R}^{(n)}_i));\; i\geq 1)$ is a collection of\textrm{ i.i.d }copies of $(\mathcal{R}^{(1)},\; (N_x^{(1)},\; x\in\mathcal{R}^{(1)}))$ under $\P$. In particular, $\bm{Z}^{(n)}_{k}(q)$ (resp. $\bm{L}^{(n)}_{k}(q)$) is a sum of $q$\textrm{ i.i.d }copies of $Z^{(1)}_{k}$ (resp. $L^{(1)}_{k}$) under the annealed probability $\P$ (but as we said, in general, $Z^{(p)}_{k}$ (resp. $L^{(p)}_{k}$) is a not sum of $p$\textrm{ i.i.d }copies of $Z^{(1)}_{k}$ (resp. $L^{(1)}_{k}$) under the annealed probability $\P$).

\vspace{0.2cm}

\noindent We are now ready to prove our first main result.

\subsection{Proof of Theorem \ref{Theorem1}}\label{ProofTheorem1}

We proceed in two steps. The first step is dedicated to the estimation of the quenched probability $\P^{\mathcal{E}}(L^{(1)}_m>0)$. For that, we use the reduced counterpart of the local time and make a link with the height process associated to the range and conclude with an excursion theory argument. The second step is devoted to the annealed estimation of $\P(L^{(1)}_m>0)$. For that, we use the first step and that $\bm{L}^{(n)}_{k}(q)$ is a sum of $q$\textrm{ i.i.d }copies of $L^{(1)}_{k}$ under $\P$.

\vspace{0.2cm}

\noindent\textbf{Step 1:} we provide an equivalent for $\P^{\mathcal{E}}(L^{(1)}_m>0)$.

\vspace{0.1cm}

\noindent First assume that $\kappa\not=2$. One can see that a direct consequence of the fact that $$\P^{\mathcal{E}}\Big(\sup_{x\in\Bar{\mathcal{B}}^{(n)}}|x|\leq\lfloor(\log n)^3\rfloor;\; \forall\; |y|\geq\lfloor(\log n)^3\rfloor,\exists\; x\in\Bar{\mathcal{B}}^{(n)}:\; x\leq y,\; \Bar{\mathcal{B}}^{(n)}\subset\mathcal{B}^{(n)}\Big)$$ goes to $1$ for almost every environment is that, $\Pb^*$-almost surely as $n\to\infty$
\begin{align}\label{ApproxReduced}
    \P^{\mathcal{E}}\Big(\forall\;a\geq 0,\; L^{(n)}_{\lfloor an^{\kappa\land 2-1}\rfloor}=\sum_{i=1}^{\CardRoots}\bm{L}^{(n,i)}_{\lfloor an^{\kappa\land 2-1}\rfloor-|e_i|}\Big)\underset{n\to\infty}{\longrightarrow}1.
\end{align}
Hence, by \eqref{ApproxReduced}, using again that $\Pb^*$-almost surely, $$\lim_{n\to\infty}\P^{\mathcal{E}}(\forall\; 1\leq i\leq\CardRoots;\;\lfloor(\log n)^2\rfloor\leq |e_i|\leq\lfloor(\log n)^3\rfloor)=1,$$ together with the fact that $\{\bm{L}^{(n)}_{k-k^+}(\CardRoots)>0\}\subset\{\sum_{i=1}^{\CardRoots}\bm{L}^{(n,i)}_{k-|e_i|}>0\}\subset\{\bm{L}^{(n)}_{k-k^-}(\CardRoots)>0\}$ for any $k^-\leq|e_i|\leq k^+\leq k$, we obtain
\begin{align*}
    \P^{\mathcal{E}}\Big(L^{(n)}_{\lfloor n^{\kappa\land 2-1}\rfloor}>0\Big)\leq\P^{\mathcal{E}}\Big(\bm{L}^{(n)}_{\lfloor n^{\kappa\land 2-1}\rfloor-\lfloor(\log n)^3\rfloor}\big(\CardRoots\big)>0\Big)+o^{\mathcal{E}}(1), 
\end{align*}
and
\begin{align*}
    \P^{\mathcal{E}}\Big(L^{(n)}_{\lfloor n^{\kappa\land 2-1}\rfloor}>0\Big)\geq\P^{\mathcal{E}}\Big(\bm{L}^{(n)}_{\lfloor n^{\kappa\land 2-1}\rfloor-\lfloor(\log n)^2\rfloor}\big(\CardRoots\Big)>0\big)+o^{\mathcal{E}}(1), 
\end{align*}
where $o^{\mathcal{E}}(1)$ represent any sequence converging to $0$, $\Pb^*$-almost surely as $n\to\infty$. Let $(a_n)$ be a sequence of positive integers such that $a_n/n^{\kappa\land 2-1}\to 1$. We have
\begin{align*}
    \P^{\mathcal{E}}\Big(\bm{L}^{(n)}_{a_n}\big(\CardRoots\big)>0\Big)=\P^{\mathcal{E}}\Big(\max_{1\leq i\leq\CardRoots}\bm{L}^{(n,i)}_{a_n}>0\Big)&=\P^{\mathcal{E}}\Big(\max_{1\leq i\leq\CardRoots}\max_{x\in\mathbfcal{R}^{(n)}_i}|x|_i\geq a_n\Big) \\ & =\P^{\mathcal{E}}\Big(\max_{1\leq j\leq \bm{F}^{(n)}(\CardRoots)}\bm{H}^{(n)}(j)\geq a_n\Big).
\end{align*}
Thanks to Fact \ref{JointConvForest} \ref{JointConvForest1} and \eqref{ConvCardRoot} (noticing that $W_{\infty}$ is deterministic at fixed environment), we have, in $\Pb^*$-law on $D([0,\infty))\times\R$
\begin{align*}
    \left(\Big(\frac{C_{\infty}}{n^{\kappa\land 2-1}}\bm{H}^{(n)}(\lfloor sn^{\kappa\land 2}\rfloor);\; s\geq 0\Big),\;\frac{C_{\infty}}{n^{\kappa\land 2}}\bm{F}^{(n)}\big(\CardRoots\big)\right)\underset{n\to\infty}{\longrightarrow}\Big(\big(\bm{H}_{\kappa}(sC_{\infty}/\bm{c}_{\infty});\; s\geq 0\big),\;\bm{c}_{\infty}\tau_{-W_{\infty}}(Y_{\kappa})\Big), 
\end{align*}
thus giving, in $\Pb^*$-probability
\begin{align*}
    \P^{\mathcal{E}}\Big(\max_{1\leq j\leq \bm{F}^{(n)}(\CardRoots)}\bm{H}^{(n)}(j)\geq a_n\Big)\underset{n\to\infty}{\longrightarrow}\P^{\mathcal{E}}\Big(\sup_{t\leq\tau_{-W_{\infty}}(Y_{\kappa})}\bm{H}_{\kappa}(t)>C_{\infty}\Big)=1-e^{-W_{\infty}v(C_{\infty})},
\end{align*}
where we have used excursion theory for the last equality and $v(a)$ satisfies $\int_{v(a)}^{\infty}\mathrm{d}\lambda/(\bm{\mathrm{C}}_{\kappa}\lambda^{\kappa\land 2})=a$. We refer to Chapter VIII.2 in \cite{LivreBertoin} for details about excursions theory and Corollary 1.4.2 in \cite{DuqLeGall} for details about $v(\cdot)$. In other words, $v(C_{\infty})=((\kappa\land 2-1)C_{\infty}\bm{\mathrm{C}}_{\kappa})^{-1/(\kappa\land 2-1)}$ thus giving \\(taking $a_n=\lfloor n^{\kappa\land 2-1}\rfloor-\lfloor(\log n)^3\rfloor$ in a first time and $a_n=\lfloor n^{\kappa\land 2-1}\rfloor-\lfloor(\log n)^2\rfloor$ in a second time), in $\Pb^*$-probability
\begin{align}\label{ConvSurvivalProba}
     \lim_{n\to\infty}\P^{\mathcal{E}}\Big(L^{(n)}_{\lfloor n^{\kappa\land 2-1}\rfloor}>0\Big)=1-\exp\Big(-W_{\infty}\big((\kappa\land 2-1)C_{\infty}\bm{\mathrm{C}}_{\kappa}\big)^{-1/(\kappa\land 2-1)}\Big).
\end{align}
On the other hand, using that under $\P^{\mathcal{E}}$, $\VectCoord{Z}{p}_k$ is a sum of $p$\textrm{ i.i.d }random variables distributed as $\VectCoord{Z}{1}_k$, we obtain
\begin{align*}
    \P^{\mathcal{E}}\Big(\VectCoord{L}{n}_{\lfloor n^{\kappa\land 2-1}\rfloor}>0\Big)=1-\Big(1-\P^{\mathcal{E}}\big(\VectCoord{Z}{1}_{\lfloor n^{\kappa\land 2-1}\rfloor}>0\big)\Big)^{n},
\end{align*}
and thanks to \eqref{ConvSurvivalProba}, we have, in $\Pb^*$-probability
\begin{align}\label{SurvivalProbaQuenched}
    n\P^{\mathcal{E}}\big(\VectCoord{Z}{1}_{\lfloor n^{\kappa\land 2-1}\rfloor}>0\big)\underset{n\to\infty}{\longrightarrow}W_{\infty}\big((\kappa\land 2-1)C_{\infty}\bm{\mathrm{C}}_{\kappa}\big)^{-1/(\kappa\land 2-1)}.
\end{align}
We conclude by recalling the definition of $\bm{\mathrm{C}}_{\kappa}$ in \eqref{DefCkappa}. Now assume that $\kappa=2$. Similarly, thanks to Fact \ref{JointConvForest} \ref{JointConvForest2}
\begin{align*}
    \P^{\mathcal{E}}\Big(\VectCoord{L}{n}_{\lfloor n/\log n\rfloor}>0\Big)\underset{n\to\infty}{\longrightarrow}\P^{\mathcal{E}}\Big(\sup_{t\leq\tau_{-W_{\infty}}(Y_{\kappa})}\bm{H}_{\kappa}(t)>2C_{\infty}\Big)=1-e^{-W_{\infty}v(2C_{\infty})}=1-e^{-W_{\infty}/(C_{\infty}c_2)},
\end{align*}
where we have used that $\bm{\mathrm{C}}_2=c_2/2$, see \eqref{DefCkappa}. This yields, in $\Pb^*$-probability
\begin{align*}
    n\P^{\mathcal{E}}\big(\VectCoord{L}{1}_{\lfloor n/\log n\rfloor}>0\big)\underset{n\to\infty}{\longrightarrow}\frac{W_{\infty}}{C_{\infty}c_2},
\end{align*}
and this ends the first step.

\vspace{0.2cm}

\noindent\textbf{Step 2:} we provide an equivalent for $\P(L^{(1)}_n>0)$.

\vspace{0.1cm}

\noindent First assume that $\kappa\not=2$. We claim that
\begin{align}\label{SurvivalProbaAnnealed}
    n\P\big(\VectCoord{L}{1}_{\lfloor n^{\kappa\land 2-1}\rfloor}>0\big)\underset{n\to\infty}{\longrightarrow}\big((\kappa\land 2-1)C_{\infty}\bm{\mathrm{C}}_{\kappa}\big)^{-1/(\kappa\land 2-1)}.
\end{align}
Indeed, on the one hand, thanks to \eqref{ConvSurvivalProba}
\begin{align}\label{ConvProbaPositive}
     \P^*\Big(\bm{L}^{(n)}_{\lfloor n^{\kappa\land 2-1}\rfloor}\big(\CardRoots\big)>0\Big)\underset{n\to\infty}{\longrightarrow}\Eb^*\Big[1-\exp\Big(-W_{\infty}\big((\kappa\land 2-1)C_{\infty}\bm{\mathrm{C}}_{\kappa}\big)^{-1/(\kappa\land 2-1)}\Big)\Big]\in(0,1).
\end{align}
On the other hand, by definition of $\bm{L}^{(n)}_k(\CardRoots)$
\begin{align*}
    \P\Big(\bm{L}^{(n)}_{\lfloor n^{\kappa\land 2-1}\rfloor}\big(\CardRoots\big)>0\Big)&=1-\P\Big(\forall\; 1\leq i\leq\Bar{B}^{(n)}:\;\bm{L}^{(n,i)}_{\lfloor n^{\kappa\land 2-1}\rfloor}=0,\; \Bar{B}^{(n)}>0\Big)-\P\big(\Bar{B}^{(n)}=0\big) \\ & =\E\Bigg[1-\Big(1-\P\big(L^{(1)}_{\lfloor n^{\kappa\land 2-1}\rfloor}>0\big)\Big)^{\CardRoots}\un_{\{\Bar{B}^{(n)}>0\}}\Bigg]-\P\big(\Bar{B}^{(n)}=0\big) \\ & =\E\Bigg[1-\Big(1-\P\big(L^{(1)}_{\lfloor n^{\kappa\land 2-1}\rfloor}>0\big)\Big)^{\CardRoots}\Bigg].
\end{align*}
Besides, using that $\lim_{n\to\infty}(\Bar{B}^{(n)}>0,\; \textrm{extinction of }\T)=0$ yields, as $n\to\infty$
\begin{align*}
    \P^*\Big(\bm{L}^{(n)}_{\lfloor n^{\kappa\land 2-1}\rfloor}\big(\CardRoots\big)>0\Big)=\E^*\Bigg[1-\Big(1-\P\big(L^{(1)}_{\lfloor n^{\kappa\land 2-1}\rfloor}>0\big)\Big)^{\CardRoots}\Bigg]+o(1).
\end{align*}
By \eqref{ConvCardRoot}, we have that $\CardRoots/n\to W_{\infty}$ in $\P^*$-probability as $n\to\infty$ so thanks to \eqref{ConvProbaPositive}, we deduce that the sequence $(n\P(L^{(1)}_{\lfloor n^{\kappa\land 2-1}\rfloor}>0))_{n\geq 1}$ is bounded and if we denote by $\zeta\in(0,1]$ the limit of a sub-sequence, we then also deduce that
\begin{align*}
    \Eb^*\Big[\exp\Big(-W_{\infty}\big((\kappa\land 2-1)C_{\infty}\bm{\mathrm{C}}_{\kappa}\big)^{-1/(\kappa\land 2-1)}\Big)\Big]=\Eb^*\big[\exp\big(-\zeta W_{\infty}\big)\big].
\end{align*}
Finally, since $\Pb^*(W_{\infty}>0)=1$, the function $\theta\in(0,\infty)\mapsto \Eb^*[\exp(-\theta W_{\infty})]$ is injective and in particular, we necessarily have $\zeta=((\kappa\land 2-1)C_{\infty}\bm{\mathrm{C}}_{\kappa})^{-1/(\kappa\land 2-1)}$ and this is what we wanted. \\
The exact same strategy for $\kappa=2$ gives
\begin{align*}
    n\P\big(\VectCoord{L}{1}_{\lfloor n/\log n\rfloor}>0\big)\underset{n\to\infty}{\longrightarrow}\frac{1}{C_{\infty}c_2},
\end{align*}
and the second step is completed. \\
Finally, combining \textbf{Step 1} and \textbf{Step 2} yields, for any $\kappa>1$, in $\Pb^*$-probability
\begin{align*}
    \frac{\P^{\mathcal{E}}\big(\VectCoord{L}{1}_{m}>0\big)}{\P\big(\VectCoord{L}{1}_{m}>0\big)}\underset{m\to\infty}{\longrightarrow}W_{\infty},
\end{align*}
which ends the proof of our first theorem. \\
We now prove our important proposition which provide an estimation of the local time in critical generations up to the first return time to the root.

\subsection{Proof of Proposition \ref{UnifLawMultyMart}}\label{ProofsOfPropositions} 

Let us begin with a collection of lemmas that will be useful for the proof of our proposition.

\subsubsection{Preliminary results}\label{SectionPR}

For any $s\in[0,1]$ and any $p,k\geq 1$, introduce $G^{\mathcal{E}}_{k}(s,p):=\E^{\mathcal{E}}[s^{Z^{(p)}_k}]$, the quenched probability generating function of $Z^{(p)}_k$ and $G_{k}(s,p):=\Eb[G_k(s,p)]=\E[s^{Z^{(p)}_k}]$ its annealed version, $G^{\mathcal{E}}_{k}(s):=G^{\mathcal{E}}_{k}(s,1)$ and $G_{k}(s):=G_{k}(s,1)$. In the next lemma, we obtain lower and upper bounds for $G_{\ell}(s,p)$ which allows in particular to deduce a result for $G_{k+\ell}(s)$:
\begin{lemm}\label{FoncGen}
Let $p,k,\ell\geq 1$ be integers, $s\in[0,1]$ and $\alpha\in(1,\kappa\land 2)$. We have
\begin{align*}
    \big(G_{\ell}(s)\big)^p\leq G_{\ell}(s,p)\leq 1\land e^{-p\big(1-G_{\ell}(s)\big)+p^{\alpha}\Eb\big[\big(1-G^{\mathcal{E}}_{\ell}(s)\big)^{\alpha}\big]},
\end{align*}
and in particular
\begin{align*}
    G_{k}(G_{\ell}(s))\leq G_{k+\ell}(s)\leq\E\left[1\land e^{-Z^{(1)}_k(1-G_{\ell}(s))}e^{\sum_{|u|=k}\big(N^{(1)}_u\big)^{\alpha}\Eb\big[\big(1-G^{\mathcal{E}}_{\ell}(s)\big)^{\alpha}\big]}\right].
\end{align*}
\end{lemm}
\begin{proof}
Since $Z^{(p)}_{\ell}$ is a sum of\textrm{ i.i.d }copies of $Z^{(1)}_{\ell}$ under the quenched probability $\P^{\mathcal{E}}$, the Jensen inequality yields $G_{\ell}(s,p)=\Eb[(G^{\mathcal{E}}_{\ell}(s))^p]\geq (G_{\ell}(s))^p$, thus giving the lower bound. For the upper bound, we have $G_{\ell}(s,p)\leq 1$ by definition. Then, one can notice that using the inequality $t\leq e^{t-1}$ for any $t\in\R$ first and the fact that for any $s\geq 0$, $e^{-s}\leq 1-s+s^{\alpha}$ in a second time, we have
\begin{align*}
    G^{\mathcal{E}}_{\ell}(s,p)=\big(G^{\mathcal{E}}_{\ell}(s)\big)^p\leq e^{-p(1-G^{\mathcal{E}}_{\ell}(s))}\leq 1-p\big(1-G^{\mathcal{E}}_{\ell}(s)\big)+p^{\alpha}\big(1-G^{\mathcal{E}}_{\ell}(s)\big)^{\alpha}.
\end{align*}
Then taking the expectation and using the fact that $1+t\leq e^t$ for any $t\in\R$, we obtain
\begin{align*}
    \Eb\big[G^{\mathcal{E}}_{\ell}(s,p)\big]\leq 1-p\big(1-G_{\ell}(s)\big)+p^{\alpha}\Eb\big[\big(1-G^{\mathcal{E}}_{\ell}(s)\big)^{\alpha}\big]\leq e^{-p\big(1-G_{\ell}(s)\big)+p^{\alpha}\Eb\big[\big(1-G^{\mathcal{E}}_{\ell}(s)\big)^{\alpha}\big]},
\end{align*}
and this yields the first result. Now, by the branching property (see Fact \ref{FactGWMulti}), we have
\begin{align*}
    G_{k+\ell}(s)=\E\Big[\prod_{u\in\mathcal{R}^{(1)};|u|=k}G_{\ell}\big(s,N_u^{(1)}\big)\Big],
\end{align*}
so using the inequality we have just proved with $p=N_u^{(1)}\geq 1$ for $u\in\mathcal{R}^{(1)}$, we obtain for the lower bound 
\begin{align*}
    G_{k}\big(G_{\ell}(s)\big)=\E\Big[\big(G_{\ell}(s)\big)^{Z^{(1)}_{k}}\Big]=\E\Big[\prod_{u\in\mathcal{R}^{(1)};|u|=k}G_{\ell}\big(s\big)^{N_u^{(1)}}\Big]\leq\E\Big[\prod_{u\in\mathcal{R}^{(1)};|u|=k}G_{\ell}\big(s,N_u^{(1)}\big)\Big]= G_{k+\ell}(s)
\end{align*}
and for the upper bound 
\begin{align*}
    G_{k+\ell}(s)=\E\Big[\prod_{u\in\mathcal{R}^{(1)};|u|=k}G_{\ell}\big(s,N_u^{(1)}\big)\Big]&\leq\E\Big[\prod_{u\in\mathcal{R}^{(1)};|u|=k}\Big(1\land e^{-N_u^{(1)}\big(1-G_{\ell}(s)\big)+\big(N_u^{(1)}\big)^{\alpha}\Eb\big[\big(1-G^{\mathcal{E}}_{\ell}(s)\big)^{\alpha}\big]}\Big)\Big] \\ & \leq\E\left[1\land e^{-Z^{(1)}_k(1-G_{\ell}(s))}e^{\sum_{|u|=k}\big(N^{(1)}_u\big)^{\alpha}\Eb\big[\big(1-G^{\mathcal{E}}_{\ell}(s)\big)^{\alpha}\big]}\right],
\end{align*}
and the proof is completed.
\end{proof}

\begin{lemm}\label{NewEquivFoncGen}
Let $i\geq 1$ be an integer and let
\begin{itemize}
    \item $(\gamma_{i,m})_{m\geq 1}$ and $(\gamma'_{i,m})_{m\geq 1}$ be two sequences of positive integers such that $\gamma_{i,m}\leq\gamma'_{i,m}$, $\gamma_{i,m}\to\infty$ and $\gamma'_{i,m}/\gamma_{i,m}\to 1$ as $m\to\infty$, uniformly in $i\geq 1$;
    \item $(t_{i,m})$ and $(t'_{i,m})$ be two sequences of positive real numbers such that $\sup_{i,m\geq 1}t_{i,m}/p_{\gamma_{i,m}}<\infty$ and \\ $\sup_{i,m\geq 1}t'_{i,m}/p_{\gamma'_{i,m}}<\infty$;
    \item $(s_{i,m})$ be a sequence of positive real numbers such that $t_{i,m}/s_{i,m}\to 1$ as $m\to\infty$ uniformly in $i\geq 1$;
    \item $(r_{i,m})_{m\geq 1}$ be a sequence of non-negative numbers such that $\sup_{i\geq 1}r_{i,m}/p_{\gamma_{i,m}}\to 0$ as $m\to\infty$ and uniformly in $i\geq 1$;
    \item $(z_{i,m})$ be a sequence of positive numbers such that $p_{\gamma_{i,m}}/z_{i,m}\to\mu\in(0,\infty)$ as $m\to\infty$, uniformly in $i\geq 1$
\end{itemize}
We have
\begin{align}\label{LemmConv1}
    \lim_{m\to\infty}\left(\frac{\E\Big[1\land e^{-t_{i,m}Z^{(1)}_{\gamma_{i,m}}-t'_{i,m}Z^{(1)}_{\gamma'_{i,m}}+r_{i,m}\sum_{|u|=\gamma'_{i,m}}\big(N^{(1)}_u\big)^{\alpha}}\Big]}{\E\Big[e^{-t_{i,m}Z^{(1)}_{\gamma_{i,m}}-t'_{i,m}Z^{(1)}_{\gamma'_{i,m}}}\Big]}\right)^{1/z_{i,m}}=1,
\end{align}
and
\begin{align}\label{LemmConv2}
    \lim_{m\to\infty}\left(\frac{\E\Big[1\land e^{-t_{i,m}Z^{(1)}_{\gamma_{i,m}}+r_{i,m}\sum_{|u|=\gamma_{i,m}}\big(N^{(1)}_u\big)^{\alpha}}\Big]}{\E\Big[e^{-s_{i,m}Z^{(1)}_{\gamma_{i,m}}}\Big]}\right)^{1/z_{i,m}}=1,
\end{align}
uniformly in $i\geq 1$.
\end{lemm}
\begin{proof}
The proofs of \eqref{LemmConv1} and \eqref{LemmConv2} are the same so we only deal with \eqref{LemmConv1}. Let us first prove that
\begin{align}\label{UnifEspBound}
    \sup_{k\geq 0}\E\Big[\sum_{|x|=k}\big(N_x^{(1)}\big)^{\alpha}\Big]<\infty.
\end{align}
Indeed, recall that for any $x\in\T$, $N_x^{(1)}$ is, under $\P^{\mathcal{E}}_{x^*}$, a Geometric random variable on $\N$ with probability of success $1-\P^{\mathcal{E}}_{x^*}(N^{(1)}_x\geq 1)$, see section \ref{SectionPR} so the strong Markov property yields $$\P^{\mathcal{E}}\big(N_x^{(1)}>\ell\big)=\P^{\mathcal{E}}\big(N^{(1)}_x\geq 1\big)\big(\P^{\mathcal{E}}_{x^*}\big(N^{(1)}_x\geq 1\big)\big)^{\ell}=\frac{e^{-V(x)}}{H_x}\Big(1-\frac{1}{H_x}\Big)^{\ell}.$$ Hence, there exists a constant $C_{\alpha}>0$ such that
\begin{align*}
    \E\Big[\sum_{|x|=k}\big(N_x^{(1)}\big)^{\alpha}\Big]&\leq C_{\alpha}\Eb\Big[\sum_{|x|=k}\frac{e^{-V(x)}}{H_x}\sum_{\ell\geq 0}\ell^{\alpha-1}\Big(1-\frac{1}{H_x}\Big)^{\ell}\Big] \\ & =C_{\alpha}\Eb\Big[\sum_{|x|=k}\big(H_x\big)^{\alpha-1}e^{-V(x)}\frac{1}{(H_x)^{\alpha}}\sum_{\ell\geq 0}\ell^{\alpha-1}\Big(1-\frac{1}{H_x}\Big)^{\ell}\Big] \\ & \leq C_{\alpha}\sup_{l\geq 0}\Eb\Big[\sum_{|x|=l}\big(H_x\big)^{\alpha-1}e^{-V(x)}\Big]\sup_{s\in(0,1]}s^{\alpha}\sum_{\ell\geq 0}\ell^{\alpha-1}(1-s)^{\ell},
\end{align*}
where we have used that $H_x\geq 1$ for all $x\in\T$ for the last inequality. By Lemma 2.2 in \cite{AndDiel3}, we have $\sup_{l\geq 0}\Eb[\sum_{|x|=l}(H_x)^{\alpha-1}e^{-V(x)}]<\infty$ and it is a well-known fact that $\sup_{s\in(0,1]}s^{\alpha}\sum_{\ell\geq 0}\ell^{\alpha-1}(1-s)^{\ell}<\infty$, so we get what we wanted. We now prove \eqref{LemmConv1}. Let us first show that
\begin{align}\label{RatioCondi}
    \lim_{m\to\infty}\frac{\E\bigg[1\land e^{-t_{i,m}Z^{(1)}_{\gamma_{i,m}}-t'_{i,m}Z^{(1)}_{\gamma'_{i,m}}+r_{i,m}\sum_{|u|=\gamma'_{i,m}}\big(N^{(1)}_u\big)^{\alpha}}\Big|Z^{(1)}_{\gamma_{i,m}}>0\bigg]}{\E\bigg[e^{-t_{i,m}Z^{(1)}_{\gamma_{i,m}}-t'_{i,m}Z^{(1)}_{\gamma'_{i,m}}}\Big|Z^{(1)}_{\gamma_{i,m}}>0\bigg]}=1,
\end{align}
uniformly in $i\geq 1$. For that, one can decompose the expectation in the denominator on the events $\{r_{i,m}\sum_{|u|=\gamma'_{i,m}}(N^{(1)}_u)^{\alpha}>\varepsilon\}$ and $\{r_{i,m}\sum_{|u|=\gamma'_{i,m}}(N^{(1)}_u)^{\alpha}\leq\varepsilon\}$, with $\varepsilon>0$. This gives in particular
\begin{align*}
    &\E\bigg[1\land e^{-t_{i,m}Z^{(1)}_{\gamma_{i,m}}-t'_{i,m}Z^{(1)}_{\gamma'_{i,m}}+r_{i,m}\sum_{|u|=\gamma'_{i,m}}\big(N^{(1)}_u\big)^{\alpha}}\Big|Z^{(1)}_{\gamma_{i,m}}>0\bigg] \\ & \leq \E\bigg[e^{-t_{i,m}Z^{(1)}_{\gamma_{i,m}}-t'_{i,m}Z^{(1)}_{\gamma'_{i,m}}+\varepsilon}\Big|Z^{(1)}_{\gamma_{i,m}}>0\bigg]+\P\bigg(r_{i,m}\sum_{|u|=\gamma'_{i,m}}\big(N^{(1)}_u\big)^{\alpha}>\varepsilon\Big|Z_{\gamma_{i,m}}>0\bigg) \\ & \leq e^{\varepsilon}\E\bigg[e^{-t_{i,m}Z^{(1)}_{\gamma_{i,m}}-t'_{i,m}Z^{(1)}_{\gamma'_{i,m}}}\Big|Z^{(1)}_{\gamma_{i,m}}>0\bigg]+\frac{1}{\varepsilon}\sup_{i\geq 1}\E\bigg[\frac{r_{i,m}}{p_{\gamma_{i,m}}}\sum_{|u|=\gamma'_{i,m}}\big(N^{(1)}_u\big)^{\alpha}\bigg],
\end{align*}
where we have used the Markov inequality for the second term in the last inequality. This gives
\begin{align*}
    \frac{\E\bigg[1\land e^{-t_{i,m}Z^{(1)}_{\gamma_{i,m}}-t'_{i,m}Z^{(1)}_{\gamma'_{i,m}}+r_{i,m}\sum_{|u|=\gamma'_{i,m}}\big(N^{(1)}_u\big)^{\alpha}}\Big|Z^{(1)}_{\gamma_{i,m}}>0\bigg]}{\E\bigg[e^{-t_{i,m}Z^{(1)}_{\gamma_{i,m}}-t'_{i,m}Z^{(1)}_{\gamma'_{i,m}}}\Big|Z^{(1)}_{\gamma_{i,m}}>0\bigg]}\leq e^{\varepsilon}+\frac{\sup_{i\geq 1}\E\bigg[\frac{r_{i,m}}{p_{\gamma_{i,m}}}\sum_{|u|=\gamma'_{i,m}}\big(N^{(1)}_u\big)^{\alpha}\bigg]}{\varepsilon\E\bigg[e^{-t_{i,m}Z^{(1)}_{\gamma_{i,m}}-t'_{i,m}Z^{(1)}_{\gamma'_{i,m}}}\Big|Z^{(1)}_{\gamma_{i,m}}>0\bigg]}.
\end{align*}
We want a lower bound for the expectation in the denominator. Let $M=2\sup_{i,m\geq 1}(t_{i,m}+t'_{i,m})/p_{\gamma_{i,m}}$. Note that $M\in(0,\infty)$. Indeed
\begin{align*}
    \sup_{i,m\geq 1}\frac{t_{{i,m}}+t'_{{i,m}}}{p_{\gamma_{i,m}}}\leq \sup_{i,m\geq 1}\frac{t_{{i,m}}}{p_{\gamma_{i,m}}}+\sup_{i,m\geq 1}\frac{t'_{{i,m}}}{p_{\gamma'_{i,m}}}\times\sup_{i,m\geq 1}\frac{p_{\gamma'_{i,m}}}{p_{\gamma_{i,m}}}.
\end{align*}
As we have assumed, $\sup_{i,m\geq 1}t_{{i,m}}/p_{\gamma_{i,m}}<\infty$ and $\sup_{i,m\geq 1}t'_{{i,m}}/p'_{\gamma_{i,m}}<\infty$. Moreover, we have by Theorem \ref{Theorem1} that $k^{1/(\kappa\land 2-1)}p_k$ goes to $((\kappa\land 2-1)C_{\infty}\bm{\mathrm{C}}_{\kappa})^{-1/(\kappa\land 2-1)}$ as $k\to\infty$ when $\kappa\not=2$ and $(k\log k)p_k$ goes to $(C_{\infty}\bm{\mathrm{C}}_{2})^{-1}$ when $\kappa=2$ as $k\to\infty$. In particular, since $\gamma'_{i,m}/\gamma_{i,m}\to 1$ as $m\to\infty$, uniformly in $i\geq 1$, we have that $\sup_{i,m\geq 1}p_{\gamma'_{i,m_{\ell}}}/p_{\gamma_{i,m_{\ell}}}<\infty$, that is $M<\infty$. Note that $t_{i,m}>0$ and $t'_{i,m}>0$ so $M>0$. Now, one can see that
\begin{align*}
    \E\bigg[e^{-t_{i,m}Z^{(1)}_{\gamma_{i,m}}-t'_{i,m}Z^{(1)}_{\gamma'_{i,m}}}\Big|Z^{(1)}_{\gamma_{i,m}}>0\bigg]&\geq\E\bigg[e^{-t_{i,m}Z^{(1)}_{\gamma_{i,m}}-t'_{i,m}Z^{(1)}_{\gamma'_{i,m}}}\un_{\big\{t_{i,m}Z^{(1)}_{\gamma_{i,m}}+t'_{i,m}Z^{(1)}_{\gamma'_{i,m}}\leq M\big\}}\Big|Z^{(1)}_{\gamma_{i,m}}>0\bigg] \\ & \geq e^{-M}\P\Big(t_{i,m}Z^{(1)}_{\gamma_{i,m}}+t'_{i,m}Z^{(1)}_{\gamma'_{i,m}}\leq M\big|Z^{(1)}_{\gamma_{i,m}}>0\Big),
\end{align*}
and thanks to the Markov inequality
\begin{align*}
    \P\Big(t_{i,m}Z^{(1)}_{\gamma_{i,m}}+t'_{i,m}Z^{(1)}_{\gamma'_{i,m}}\leq M\big|Z^{(1)}_{\gamma_{i,m}}>0\Big)\geq 1-\frac{1}{M}\frac{t_{i,m}+t'_{i,m}}{p_{\gamma_{i,m}}}\geq\frac{1}{2},
\end{align*}
that is
\begin{align*}
    \E\bigg[e^{-t_{i,m}Z^{(1)}_{\gamma_{i,m}}-t'_{i,m}Z^{(1)}_{\gamma'_{i,m}}}\Big|Z^{(1)}_{\gamma_{i,m}}>0\bigg]\geq\frac{e^{-M}}{2}.
\end{align*}
This yields
\begin{align*}
    \frac{\E\bigg[1\land e^{-t_{i,m}Z^{(1)}_{\gamma_{i,m}}-t'_{i,m}Z^{(1)}_{\gamma'_{i,m}}+r_{i,m}\sum_{|u|=\gamma'_{i,m}}\big(N^{(1)}_u\big)^{\alpha}}\Big|Z^{(1)}_{\gamma_{i,m}}>0\bigg]}{\E\bigg[e^{-t_{i,m}Z^{(1)}_{\gamma_{i,m}}-t'_{i,m}Z^{(1)}_{\gamma'_{i,m}}}\Big|Z^{(1)}_{\gamma_{i,m}}>0\bigg]}\leq e^{\varepsilon}+\frac{2}{\varepsilon}e^M\sup_{i\geq 1}\E\bigg[\frac{r_{i,m}}{p_{\gamma_{i,m}}}\sum_{|u|=\gamma'_{i,m}}\big(N^{(1)}_u\big)^{\alpha}\bigg].
\end{align*}
Thanks to \eqref{UnifEspBound}, we have
\begin{align*}
    \sup_{i\geq 1}\E\bigg[\frac{r_{i,m}}{p_{\gamma_{i,m}}}\sum_{|u|=\gamma'_{i,m}}\big(N^{(1)}_u\big)^{\alpha}\bigg]\leq\sup_{k\geq 0}\E\Big[\sum_{|x|=k}\big(N_x^{(1)}\big)^{\alpha}\Big]\sup_{i\geq 1}\frac{r_{i,m}}{p_{\gamma_{i,m}}}\underset{m\to\infty}{\longrightarrow}0,
\end{align*}
so for any $\varepsilon>0$
\begin{align*}
    \limsup_{m\to\infty}\frac{\E\bigg[1\land e^{-t_{i,m}Z^{(1)}_{\gamma_{i,m}}-t'_{i,m}Z^{(1)}_{\gamma'_{i,m}}+r_{i,m}\sum_{|u|=\gamma'_{i,m}}\big(N^{(1)}_u\big)^{\alpha}}\Big|Z^{(1)}_{\gamma_{i,m}}>0\bigg]}{\E\bigg[e^{-t_{i,m}Z^{(1)}_{\gamma_{i,m}}-t'_{i,m}Z^{(1)}_{\gamma'_{i,m}}}\Big|Z^{(1)}_{\gamma_{i,m}}>0\bigg]}\leq e^{\varepsilon},
\end{align*}
uniformly in $i\geq 1$. For the lower bound, one can see that
\begin{align*}
    1\land e^{-t_{i,m}Z^{(1)}_{\gamma_{i,m}}-t'_{i,m}Z^{(1)}_{\gamma'_{i,m}}+r_{i,m}\sum_{|u|=\gamma'_{i,m}}\big(N^{(1)}_u\big)^{\alpha}}\geq e^{-t_{i,m}Z^{(1)}_{\gamma_{i,m}}-t'_{i,m}Z^{(1)}_{\gamma'_{i,m}}},
\end{align*}
that is
\begin{align*}
    \frac{\E\bigg[1\land e^{-t_{i,m}Z^{(1)}_{\gamma_{i,m}}-t'_{i,m}Z^{(1)}_{\gamma'_{i,m}}+r_{i,m}\sum_{|u|=\gamma'_{i,m}}\big(N^{(1)}_u\big)^{\alpha}}\Big|Z^{(1)}_{\gamma_{i,m}}>0\bigg]}{\E\bigg[e^{-t_{i,m}Z^{(1)}_{\gamma_{i,m}}-t'_{i,m}Z^{(1)}_{\gamma'_{i,m}}}\Big|Z^{(1)}_{\gamma_{i,m}}>0\bigg]}\geq 1.
\end{align*}
Taking $\varepsilon\to 0$ yields \eqref{RatioCondi}. We deduce from \eqref{RatioCondi} that
\begin{align*}
    \lim_{m\to\infty}\Bigg(&\E\bigg[1\land e^{-t_{i,m}Z^{(1)}_{\gamma_{i,m}}-t'_{i,m}Z^{(1)}_{\gamma'_{i,m}}+r_{i,m}\sum_{|u|=\gamma'_{i,m}}\big(N^{(1)}_u\big)^{\alpha}}\Big|Z^{(1)}_{\gamma_{i,m}}>0\bigg] \\ & -\E\bigg[e^{-t_{i,m}Z^{(1)}_{\gamma_{i,m}}-t'_{i,m}Z^{(1)}_{\gamma'_{i,m}}}\Big|Z^{(1)}_{\gamma_{i,m}}>0\bigg]\Bigg)=0,
\end{align*}
uniformly in $i\geq 1$ and since $\lim_{m\to\infty}\sup_{i\geq 1}p_{\gamma'_{i,m}}=0$, $\gamma_{i,m}\leq\gamma'_{i,m}$ and $\lim_{m\to\infty}p_{\gamma_{i,m}}/z_{i,m}=\mu$ uniformly in $i\geq 1$, we have
\begin{align*}
    & \lim_{m\to\infty}\left(\frac{\E\Big[1\land e^{-t_{i,m}Z^{(1)}_{\gamma_{i,m}}-t'_{i,m}Z^{(1)}_{\gamma'_{i,m}}+r_{i,m}\sum_{|u|=\gamma'_{i,m}}\big(N^{(1)}_u\big)^{\alpha}}\Big]}{\E\Big[e^{-t_{i,m}Z^{(1)}_{\gamma_{i,m}}-t'_{i,m}Z^{(1)}_{\gamma'_{i,m}}}\Big]}\right)^{1/z_{i,m}} \\ & =\lim_{m\to\infty}\left(\frac{1-p_{\gamma_{i,m}}\times\Big(1-\E\big[1\land e^{-t_{i,m}Z^{(1)}_{\gamma_{i,m}}-t'_{i,m}Z^{(1)}_{\gamma'_{i,m}}+r_{i,m}\sum_{|u|=\gamma'_{i,m}}\big(N^{(1)}_u\big)^{\alpha}}\big|Z^{(1)}_{\gamma_{i,m}}>0\big]\Big)}{1-p_{\gamma_{i,m}}\times\Big(1-\E\big[e^{-t_{i,m}Z^{(1)}_{\gamma_{i,m}}-t'_{i,m}Z^{(1)}_{\gamma'_{i,m}}}\big|Z^{(1)}_{\gamma_{i,m}}>0\big]\Big)}\right)^{1/z_{i,m}}=1,
\end{align*}
uniformly in $i\geq 1$, thus giving \eqref{LemmConv1}.
\end{proof}

\noindent Recall the definition of $\phi^{(\kappa)}(\cdot)$ in \eqref{PhiKappa}. We now make a link between the probability generating function of the multi-type additive martingale and $\phi^{(\kappa)}(\cdot)$.

\begin{lemm}\label{LemmaProp1}
For any integer $i\geq 1$, let $(\gamma_{i,m})_{m\geq 1}$ be a sequence of positive integers such that $0\leq\gamma^-_m\leq\gamma_{i,m}\leq\gamma^+_m$ where $\gamma^-_m/m\to 1$, $\gamma^+_m/m\to 1$ as $m\to\infty$. For any $\lambda\geq 0$
\begin{align}\label{ConvUnifEdge}
    \lim_{m\to\infty}\Big(G_{\gamma_{i,m}}\big(e^{-\lambda p_m}\big)\Big)^{1/p_m}=e^{-\big(1-\phi^{(\kappa)}(\lambda)\big)},
\end{align}
uniformly in $i\geq 1$.
\end{lemm}
\begin{proof}
The case $\lambda=0$ is trivial. Assume that $\lambda>0$. We follow an idea developed in the proof of Theorem 1 in \cite{Slack1968}. Since $e^{-\lambda p_m}\to1$ as $m\to\infty$, there exists a sequence $(k_m)_{m\geq 1}$ of integers (depending on $\lambda$) such that $k_m\to\infty$ as $m\to\infty$ and for any $m\geq 1$
\begin{align}\label{Encadrement}
    G_{k_m}(0)\leq e^{-\lambda p_m}<G_{k_m+1}(0).
\end{align}
Note that we necessarily have $m/k_m\to\lambda^{\kappa\land 2-1}$ as $m\to\infty$ for all $\kappa>1$. Indeed, noticing that $G_k(0)=1-p_k$, we have by Theorem \ref{Theorem1} that $\ell^{1/(\kappa\land 2-1)}(1-G_{\ell}(0))$ goes to $((\kappa\land 2-1)C_{\infty}\bm{\mathrm{C}}_{\kappa})^{-1/(\kappa\land 2-1)}$ as $\ell\to\infty$ when $\kappa\not=2$ and $(\ell\log \ell)(1-G_{\ell}(0))$ goes to $(C_{\infty}\bm{\mathrm{C}}_{2})^{-1}$ when $\kappa=2$ as $\ell\to\infty$. Moreover, by definition, $(1-e^{-\lambda p_m})/(1-G_m(0))\to\lambda$ as $m\to\infty$. Hence, by \eqref{Encadrement}, $\liminf_{m\to\infty}(1-G_{k_m})/(1-G_m(0))\geq\lambda$ and $\limsup_{m\to\infty}(1-G_{k_m+1})/(1-G_m(0))\leq\lambda$ thus giving $m/k_m\to\lambda^{\kappa\land 2-1}$ as $m\to\infty$ for all $\kappa>1$. \\
Let us now deal with the upper bound in \eqref{ConvUnifEdge}. We have
\begin{align*}
    G_{\gamma_{i,m}}\big(e^{-\lambda p_m}\big)\leq G_{\gamma_{i,m}}\big(G_{k_m+1}(0)\big)\leq G_{\gamma_{i,m}+k_m+1}(0)\leq G_{\gamma^+_{m}+k_m+1}(0)=1-p_{\gamma^+_m+k_m+1},
\end{align*}
where we have used \eqref{Encadrement} and the fact that $G_k$ is non-decreasing for the first inequality, Lemma \ref{FoncGen} (lower bound) saying that $G_{k}(G_{\ell}(s))\leq G_{k+\ell}(s)$ for the second one and the fact that the sequence $(G_k(0))_{k\geq 1}$ is non-decreasing for the last inequality. Using again Theorem \ref{Theorem1}, we get that the sequence $(p_{\gamma^+_m+k_m+1}/p_m)_m$ converges to $(1+\lambda^{-(\kappa\land 2-1)})^{-1/(\kappa\land 2-1)}=1-\phi^{(\kappa)}(\lambda)$ as $m\to\infty$. Therefore, uniformly in $i\geq 1$
\begin{align*}
    \limsup_{n\to\infty}\Big(G_{\gamma_{i,m}}\big(e^{-\lambda p_m}\big)\Big)^{1/p_m}\leq e^{-\big(1-\phi^{(\kappa)}(\lambda)\big)}.
\end{align*}
We now deal with the lower bound in \eqref{ConvUnifEdge}. Note that $$\frac{1}{p_{k_m}}\Eb\big[(p^{\mathcal{E}}_{k_m})^{\alpha}\big]\leq(p_{k_m})^{\alpha-1}\sup_{\ell\geq 0}\Eb\big[(p^{\mathcal{E}}_{\ell}/p_{\ell})^{\alpha}\big]\to 0$$ as $m\to\infty$ for any $\alpha\in(1,\kappa\land 2)$ by Remark \ref{BoundedInLp} (we recall that $k_m\to\infty$). Now, if $t_{i,m}=p_{k_m}$ and $s_{i,m}=-\log G_{k_m}(0)=-\log(1-p_{k_m})$ for all $i\geq 1$, then by Theorem \ref{Theorem1}, since $m/k_m\to\lambda^{\kappa\land 2-1}$, we have $\sup_{i\geq 1}|t_{i,m}/p_{\gamma_{i,m}}-\lambda|\to0$ and $\sup_{i\geq 1}|s_{i,m}/p_{\gamma_{i,m}}-\lambda|\to0$ as $m\to\infty$. Hence, by Lemma \ref{NewEquivFoncGen} \eqref{LemmConv2} with $r_{i,m}=\Eb[(p^{\mathcal{E}}_{k_m})^{\alpha}]$ and $z_{i,m}=p_m$, we have
\begin{align*}
    G_{\gamma_{i,m}}\big(G_{k_m}(0)\big)=\big(1+o(1)\big)^{p_m}\E\left[1\land e^{-p_{k_{m}}Z^{(1)}_{\gamma_{i,m}}+\sum_{|u|=\gamma_{i,m}}\big(N^{(1)}_u\big)^{\alpha}\Eb\big[\big(p^{\mathcal{E}}_{k_m}\big)^{\alpha}\big]}\right], 
\end{align*}
as $m\to\infty$, uniformly in $i\geq 1$. Besides, we know from Lemma \ref{FoncGen} with $s=0$ that
\begin{align*}
    \E\left[1\land e^{-p_{k_{m}}Z^{(1)}_{\gamma_{i,m}}+\sum_{|u|=\gamma_{i,m}}\big(N^{(1)}_u\big)^{\alpha}\Eb\big[\big(p^{\mathcal{E}}_{k_m}\big)^{\alpha}\big]}\right]\geq 1-p_{\gamma_{i,m}+k_m}.
\end{align*}
Note that $1-p_{\gamma_{i,m}+k_m}\geq 1-p_{\gamma^-_m+k_m}$ and thanks to \eqref{Encadrement}, $G_{\gamma_{i,m}}\big(G_{k_m}(0)\big)\leq G_{\gamma_{i,m}}(e^{-\lambda p_m})$ thus giving
\begin{align*}
    \liminf_{m\to\infty}\Big(G_{\gamma_{i,m}}\big(e^{-\lambda p_m}\big)\Big)^{1/p_m}\geq\lim_{m\to\infty}\big(1-p_{\gamma^-_m+k_m}\big)^{1/p_m}=e^{-\big(1-\phi^{(\kappa)}(\lambda)\big)},
\end{align*}
uniformly in $i\geq 1$ and this gives \eqref{ConvUnifEdge}, where we have used that the sequence $(p_{\gamma^-_m+k_m}/p_m)_m$ converges to $(1+\lambda^{-(\kappa\land 2-1)})^{-1/(\kappa\land 2-1)}=1-\phi^{(\kappa)}(\lambda)$ and the proof of the lemma is completed.
\end{proof}
\noindent The next result is an analogue of Lemma \ref{LemmaProp1} for generation negligible with respect to $m$:
\begin{lemm}\label{LemmaProp2}
For any $i\geq 1$, let $(a_{i,m})_{m\geq 1}$ be a sequence of positive integers such that $0\leq a^-_m\leq a_{i,m}\leq a^+_m$ where $a^+_m=o(m)$ as $m\to\infty$, we have
\begin{align}\label{ConvUnifEdgeSmall}
    \lim_{m\to\infty}\Big(G_{a_{i,n}}\big(e^{-\lambda p_m}\big)\Big)^{1/p_m}=e^{-\lambda},
\end{align}
uniformly in $i\geq 1$.
\end{lemm}

\begin{proof}
Indeed, we still have (see the proof of Lemma \ref{LemmaProp1})
\begin{align*}
    \big(1+o(1)\big)^{p_m}\big(1-p_{a^-_m+k_m}\big)\leq G_{a_{i,n}}\big(e^{-\lambda p_m}\big)\leq 1-p_{a^+_m+k_m},
\end{align*}
as $m\to\infty$ and uniformly in $i\geq 1$. Finally, recalling that $m/k_m\to\lambda^{\kappa\land 2-1}$ and since $a^-_m\leq a^+_m=o(m)$, we have $\lim_{m\to\infty}p_{a^-_m+k_m}/p_m=\lim_{m\to\infty}p_{a^+_m+k_m}/p_m=\lim_{m\to\infty}(m/k_m)^{1/(\kappa\land 2-1)}=\lambda$ thus giving \eqref{ConvUnifEdgeSmall}.
\end{proof}

\noindent We are now ready to prove our main statement.

\begin{proof}[Proof of Proposition \ref{UnifLawMultyMart}]

For any integer $i\geq 1$, $(\gamma_{i,m})_{m\geq 1}$ and $(\rho_{i,m})_{m\geq 1}$ are two sequences of positive integers such that $0\leq\gamma^-_m\leq\gamma_{i,m}\leq\gamma^+_m$ and $0\leq\rho^-_m\leq\rho_{i,m}\leq\rho^+_m$ where $\gamma^-_m/m\to 1$, $\gamma^+_m/m\to 1$ and $\rho^+_m/m\to 0$ as $m\to\infty$. We want to prove that for any $\lambda_1,\lambda_2\geq 0$, uniformly in $i\geq 1$
\begin{align*}
    \lim_{m\to\infty}\Big(\E\Big[e^{-\lambda_1 p_{m}L^{(1)}_{\gamma_{i,m}}}e^{-\lambda_2p_mL^{(1)}_{\gamma_{i,m}+\rho_{i,m}}}\Big]\Big)^{1/p_m}=e^{-\big(1-\phi^{(\kappa)}(2\lambda_1+2\lambda_2)\big)}.
\end{align*}
One can only focus on the case $\lambda_1,\lambda_2>0$. Recall that $L^{(1)}_k=Z^{(1)}_k+Z^{(1)}_{k+1}$ so thanks to the branching property (see Fact \ref{FactGWMulti})
\begin{align*}
    \E\Big[e^{-\lambda_1p_mL^{(1)}_{\gamma_{i,m}}}e^{-\lambda_2p_mL^{(1)}_{\gamma_{i,m}+\rho_{i,m}}}\Big]=\E\Big[e^{-\lambda_1p_mL^{(1)}_{\gamma_{i,m}}}e^{-\lambda_2p_mZ^{(1)}_{\gamma_{i,m}+\rho_{i,m}}}\prod_{u\in\mathcal{R}^{(1)};|u|=\gamma_{i,m}+\rho_{i,m}}G_{1}\Big(e^{-\lambda_2p_m},N_u^{(1)}\Big)\Big].
\end{align*}
For the lower bound, by Lemma \ref{FoncGen}, we have $G_{\ell}(s,p)\geq (G_{\ell}(s))^p$ so
\begin{align*}
    \E\left[e^{-\lambda_1p_mL^{(1)}_{\gamma_{i,m}}}e^{-\lambda_2p_mL^{(1)}_{\gamma_{i,m}+\rho_{i,m}}}\right]\geq\E\Big[e^{-\lambda_1p_mL^{(1)}_{\gamma_{i,m}}}\Big(e^{-\lambda_2p_m}G_1\big(e^{-\lambda_2p_m}\big)\Big)^{Z^{(1)}_{\gamma_{i,m}+\rho_{i,m}}}\Big].
\end{align*}
Again, thanks to the branching property and the fact that $G_{\ell}(s,p)\geq (G_{\ell}(s))^p$, we have
\begin{align*}
    &\E\Big[e^{-\lambda_1p_mL^{(1)}_{\gamma_{i,m}}}\Big(e^{-\lambda_2p_m}G_1\big(e^{-\lambda_2p_m}\big)\Big)^{Z^{(1)}_{\gamma_{i,m}+\rho_{i,m}}}\Big] \\ & =\E\Big[e^{-\lambda_1p_mL^{(1)}_{\gamma_{i,m}}}\prod_{u\in\mathcal{R}^{(1)};|u|=\gamma_{i,m}+1}G_{\rho_{i,m}-1}\Big(e^{-\lambda_2p_m}G_1\big(e^{-\lambda_2p_m}\big),N_u^{(1)}\Big)\Big]\geq\E\left[e^{-\lambda_1p_mZ^{(1)}_{\gamma_{i,m}}}\big(\mathfrak{g}_{i,m}\big)^{Z^{(1)}_{\gamma_{i,m}+1}}\right],
\end{align*}
where 
\begin{align*}
    \mathfrak{g}_{i,m}:=e^{-\lambda_1p_m}G_{\rho_{i,m}-1}\Big(e^{-\lambda_2p_m}G_1\big(e^{-\lambda_2p_m}\big)\Big).
\end{align*}
Doing this one more time and we finally obtain
\begin{align*}
    \E\Big[e^{-\lambda_1p_mL^{(1)}_{\gamma_{i,m}}}e^{-\lambda_2p_mL^{(1)}_{\gamma_{i,m}+\rho_{i,m}}}\Big]\geq G_{\gamma_{i,m}}\left(e^{-\lambda_1p_m}G_1\left(\mathfrak{g}_{i,m}\right)\right).
\end{align*}
By Lemma \ref{LemmaProp2}, we have, for any $\lambda\geq 0$ as $m\to\infty$
\begin{align}\label{EstiG1}
    \lim_{m\to\infty}\Big(G_1\big(e^{-\lambda p_m}\big)\Big)^{1/p_m}=e^{-\lambda}.
\end{align}
In particular, $\lim_{m\to\infty}(e^{-\lambda_2p_m}G_1(e^{-\lambda_2p_m}))^{1/p_m}=e^{-2\lambda_2}$ and recalling that $\rho^-_m\leq\rho_{i,m}\leq\rho^+_m=o(m)$, Lemma \ref{LemmaProp2} yields $\lim_{m\to\infty}(\mathfrak{g}_{i,m})^{1/p_m}=e^{-\lambda_1-2\lambda_2}$. Since $G_1$ is non-decreasing, we obtain from \eqref{EstiG1} that $$\lim_{m\to\infty}(G_1(\mathfrak{g}_{i,m}))^{1/p_m}=e^{-\lambda_1-2\lambda_2},$$ uniformly in $i\geq 1$. Then, we get that $\lim_{m\to\infty}(e^{-\lambda_1p_m}G_1(\mathfrak{g}_{i,m}))^{1/p_m}=e^{-2\lambda_1-2\lambda_2}$, uniformly in $i\geq 1$. Hence, recalling that $\gamma^-_m\leq\gamma_{i,m}\leq\gamma^+_m$ with $\lim_{m\to\infty}\gamma^-_m/m=\lim_{m\to\infty}\gamma^+_m/m=1$ as $m\to\infty$ and that the sequence $(p_k)_{k\geq 1}$ is non-increasing, we have, by Lemma \ref{NewEquivFoncGen} with $t_{i,m}=(2\lambda
_1+2\lambda_2)p_m$ for all $i\geq 1$, $s_{i,m}=\lambda_1p_m-\log G_1(\mathfrak{g}_{i,m})$ and $z_{i,m}=p_m$, that 
\begin{align*}
    \E\Big[e^{-\lambda_1p_mL^{(1)}_{\gamma_{i,m}}}e^{-\lambda_2p_mL^{(1)}_{\gamma_{i,m}+\rho_{i,m}}}\Big]\geq \big(1+o(1)\big)^{p_m}G_{\gamma_{i,m}}\Big(e^{-2(\lambda_1+\lambda_2)p_m}\Big),
\end{align*}
as $m\to\infty$, uniformly in $i\geq 1$. Therefore, Lemma \ref{LemmaProp1} yields
\begin{align*}
    \liminf_{m\to\infty}\E\Big[e^{-\lambda_1p_mL^{(1)}_{\gamma_{i,m}}}e^{-\lambda_2p_mL^{(1)}_{\gamma_{i,m}+\rho_{i,m}}}\Big]^{1/p_m}\geq e^{-\big(1-\phi^{(\kappa)}(2\lambda_1+2\lambda_2)\big)},
\end{align*}
uniformly in $i\geq 1$. For the upper bound, we have, thanks to the branching property (see Fact \ref{FactGWMulti}) and Lemma \ref{FoncGen} (upper bound), that for any $\alpha\in(1,\kappa\land 2)$, $\E[e^{-\lambda_1p_mL^{(1)}_{\gamma_{i,m}}}e^{-\lambda_2p_mL^{(1)}_{\gamma_{i,m}+\rho_{i,m}}}]$ is smaller than
\begin{align*}
    &\E\Bigg[e^{-\lambda_1p_mL^{(1)}_{\gamma_{i,m}}-\lambda_2p_mZ^{(1)}_{\gamma_{i,m}+\rho_{i,m}}}\Bigg(1\land e^{-\big(1-G_1(e^{-\lambda_2p_m})\big)Z^{(1)}_{\gamma_{i,m}+\rho_{i,m}}+\sum_{|u|=\gamma_{i,m}+\rho_{i,m}}\big(N^{(1)}_u\big)^{\alpha}\Eb\big[\big(1-G^{\mathcal{E}}_1(e^{-\lambda_2p_m})\big)^{\alpha}\big]}\Bigg)\Bigg] \\ & \leq\E\Bigg[1\land e^{-\lambda_1p_mL^{(1)}_{\gamma_{i,m}}}e^{-\big(\lambda_2p_m+1-G_1(e^{-\lambda_2p_m})\big)Z^{(1)}_{\gamma_{i,m}+\rho_{i,m}}+\sum_{|u|=\gamma_{i,m}+\rho_{i,m}}\big(N^{(1)}_u\big)^{\alpha}\Eb\big[\big(1-G^{\mathcal{E}}_1(e^{-\lambda_2p_m})\big)^{\alpha}\big]}\Bigg].
\end{align*}
Note that $$\big(1-G^{\mathcal{E}}_1(e^{-\lambda_2p_m})\big)^{\alpha}=\big(\E^{\mathcal{E}}[1-e^{-\lambda_2p_mZ^{(1)}_1}]\big)^{\alpha}\leq\big(\E^{\mathcal{E}}[\lambda_2p_mZ^{(1)}_1]\big)^{\alpha}=(\lambda_2p_m)^{\alpha}(W_1)^{\alpha}.$$ Recall that under the Assumptions \ref{Assumption1} and \ref{Assumption2}, we have $\sup_{\ell\geq 0}\Eb[(W_{\ell})^{\alpha}]<\infty$ so
\begin{align*}
    \lim_{m\to\infty}\sup_{i\geq 1}\frac{1}{p_{\gamma_{i,m}}}\Eb\big[\big(1-G^{\mathcal{E}}_1(e^{-\lambda_2p_m})\big)^{\alpha}\big]\leq\lim_{m\to\infty}\frac{(\lambda_1p_m)^{\alpha}}{p_{\gamma^+_m}}\Eb\big[(W_1)^{\alpha}\big]=0,
\end{align*}
since $\alpha>1$, $p_m/p_{\gamma^+_m}\to1$ and $p_{\gamma^+_m}\to 0$ as $m\to\infty$. Also note that $\lim_{m\to\infty} (1-G_1(e^{-\lambda_2p_m}))/p_m=\lambda_2$ by Lemma \ref{LemmaProp2}. In particular
\begin{align*}
    \sup_{m,i\geq 1}\frac{\lambda_2p_m+1-G_1(e^{-\lambda_2p_m})}{p_{\gamma_{i,m}+\rho_{i,m}}}\leq\sup_{m\geq 1}\frac{\lambda_2p_m+1-G_1(e^{-\lambda_2p_m})}{p_{\gamma^+_m+\rho^+_m}}<\infty.
\end{align*}
Hence, thanks to Lemma \ref{NewEquivFoncGen} \eqref{LemmConv1} with $\gamma'_{i,m}=\gamma_{i,m}+\rho_{i,m}$, $t_{i,m}=\lambda_1p_m$, $t'_{i,m}=\lambda_2p_m+1-G_1(e^{-\lambda_2p_m})$, $r_{i,m}=\Eb[\big(1-G^{\mathcal{E}}_1(e^{-\lambda_2p_m})\big)^{\alpha}]$ and $z_{i,m}=p_m$, we have
\begin{align*}
    &\E\Bigg[1\land e^{-\lambda_1p_mL^{(1)}_{\gamma_{i,m}}}e^{-\big(\lambda_2p_m+1-G_1(e^{-\lambda_2p_m})\big)Z^{(1)}_{\gamma_{i,m}+\rho_{i,m}}+\sum_{|u|=\gamma_{i,m}+\rho_{i,m}}\big(N^{(1)}_u\big)^{\alpha}\Eb\big[\big(1-G^{\mathcal{E}}_1(e^{-\lambda_2p_m})\big)^{\alpha}\big]}\Bigg] \\ & =\big(1+o(1)\big)^{p_m}\E\Bigg[e^{-\lambda_1p_mL^{(1)}_{\gamma_{i,m}}-\big(\lambda_2p_m+1-G_1(e^{-\lambda_2p_m})\big)Z^{(1)}_{\gamma_{i,m}+\rho_{i,m}}}\Bigg],
\end{align*}
as $m\to\infty$, uniformly in $i\geq 1$. Repeating this procedure twice, one can prove that
\begin{align*}
    \E\Bigg[e^{-\lambda_1p_mL^{(1)}_{\gamma_{i,m}}-\big(\lambda_2p_m+1-G_1(e^{-\lambda_2p_m})\big)Z^{(1)}_{\gamma_{i,m}+\rho_{i,m}}}\Bigg]\leq\big(1+o(1)\big)^{p_m}G_{\gamma_{i,m}}\big(e^{-\mathfrak{b}_{i,m}}\big),
\end{align*}
as $m\to\infty$ and uniformly in $m\geq 1$ where
\begin{align*}
    \mathfrak{b}_{i,m}:=\lambda_1p_m+1-G_1\big(e^{-\mathfrak{a}_{i,m}}\big)\;\textrm{ and }\;\mathfrak{a}_{i,m}=\lambda_1p_m+1-G_{\rho_{i,m}-1}\big(e^{-(\lambda_2p_m+1-G_1(e^{-\lambda_2p_m}))}\big).
\end{align*}
Let us prove that $\mathfrak{b}_{i,m}/p_m\to 2\lambda_1+2\lambda_2$ as $m\to\infty$, uniformly in $i\geq 1$. As already proved above, we have $\lim_{m\to\infty}(\lambda_2p_m+1-G_1(e^{-\lambda_2p_m}))/p_m\to 2\lambda_2$ as $m\to\infty$ and since $\rho^-\leq\rho_{i,m}\leq\rho_m^+=o(m)$, Lemma \ref{LemmaProp2} yields $(1-G_{\rho_{i,m}-1}(e^{-(\lambda_2p_m+1-G_1(e^{-\lambda_2p_m}))}))/p_m\to 2\lambda_2$ as $m\to\infty$, uniformly in $i\geq 1$, that is $\mathfrak{a}_{i,m}/p_m\to\lambda_1+2\lambda_2$ as $m\to\infty$, uniformly in $i\geq 1$. Similarly, one can see that $1-G_1(e^{-\mathfrak{a}_{i,m}})\to\lambda_1+2\lambda_2$ as $m\to\infty$, uniformly in $i\geq 1$ and this gives the convergence of $\mathfrak{b}_{i,m}/p_m$. Now, we can use that $\mathfrak{b}_{i,m}/((2\lambda_1+2\lambda_2)p_m)\to1$ as $m\to\infty$, uniformly in $i\geq 1$, together with Lemma \ref{NewEquivFoncGen} \eqref{LemmConv2} with $t_{i,m}=\mathfrak{b}_{i,m}$, $s_{i,m}=(2\lambda_1+2\lambda_2)p_m$, $r_{i,m}=0$ and $z_{i,m}=p_m$ to get
\begin{align*}
    \Big(G_{\gamma_{i,m}}\big(e^{-\mathfrak{b}_{i,m}}\big)\Big)^{1/p_m}=\big(1+o(1)\big)\Big(G_{\gamma_{i,m}}\big(e^{-(2\lambda_1+2\lambda_2)p_m}\big)\Big)^{1/p_m},
\end{align*}
as $m\to\infty$, uniformly in $i\geq 1$ and Lemma \ref{LemmaProp1} yields 
\begin{align*}
    \lim_{m\to\infty}\Big(G_{\gamma_{i,m}}\big(e^{-\mathfrak{b}_{i,m}}\big)\Big)^{1/p_m}=e^{-\big(1-\phi^{(\kappa)}(2\lambda_1+2\lambda_2)\big)},
\end{align*}
uniformly in $i\geq 1$. Hence
\begin{align*}
    \limsup_{m\to\infty}\E\Big[e^{-\lambda_1p_mL^{(1)}_{\gamma_{i,m}}}e^{-\lambda_2p_mL^{(1)}_{\gamma_{i,m}+\rho_{i,m}}}\Big]^{1/p_m}\leq e^{-\big(1-\phi^{(\kappa)}(2\lambda_1+2\lambda_2)\big)},
\end{align*}
uniformly in $i\geq 1$, thus giving the upper bound we wanted and the proof of the proposition is completed.
\end{proof}

\noindent We now use Proposition \ref{UnifLawMultyMart} in order to prove our second theorem.

\subsection{Proof of Theorem \ref{Theorem3}}\label{ProofTheorem3}

We only deal with the case $\kappa\in(1,2)$, the proof is the same for $\kappa\geq 2$. We want to prove that for any $a>0$, in $\Pb^*$-law
\begin{align*}
    \frac{1}{n}L^{(n)}_{\lfloor an^{\kappa-1}\rfloor}\underset{n\to\infty}{\longrightarrow} 2W_{\infty}\mathcal{Y}^{(\kappa)}_{a/(W_{\infty})^{\kappa-1}}\;,
\end{align*}
where we recall the definition of $\mathcal{Y}^{(\kappa)}$ in \eqref{LaplaceCSBP}. The proof is divided into four steps. In the first step, we prove that under the annealed probability $\P$, the local time can be approximated by the reduced local time, in the sense that in $\P$-probability, $\frac{1}{n}L^{(n)}_{\lfloor an^{\kappa-1}\rfloor}-\frac{1}{n}\bm{L}^{(n)}_{\lfloor an^{\kappa-1}\rfloor}(\Bar{B}^{(n)})$ goes to $0$ as $n\to\infty$. As already mentioned before, the reduced range is more convenient to work with under the annealed probability $\P$. In the second step, we show that the sequence of random processes $((\frac{1}{n}\bm{L}^{(n)}_{\lfloor an^{\kappa-1}\rfloor}(\tn);\; t\geq 0))_{n\geq 1}$ converges in law under $\P$ to $(2t\mathcal{Y}^{(\kappa)}_{a/t^{\kappa-1}};\; t\geq 0)$ thanks to Proposition \ref{UnifLawMultyMart} and standard tightness arguments. Using the same idea as in proof of Theorem 1.1 in \cite{AidRap}, we prove in the third step that the latter convergence holds under the quenched probability $\P^{\mathcal{E}}$ for almost every environment $\mathcal{E}$. Finally, we take $t=\frac{\CardRoots}{n}$ in the last step and conclude using \eqref{ConvCardRoot}, stating that $\frac{\CardRoots}{n}$ converges to $W_{\infty}$ in $\P^*$-probability as $n\to\infty$.

\vspace{0.2cm}

\noindent \textbf{Step 1}: we can restrict ourselves to the convergence of the reduced local time.

\vspace{0.1cm}

\noindent Let us first prove that for any $\varepsilon>0$ and any $a>0$
\begin{align}\label{DiffLoctime}
    \lim_{n\to\infty}\P\Big(\Big| L^{(n)}_{\lfloor an^{\kappa-1}\rfloor}-\bm{L}^{(n)}_{\lfloor an^{\kappa-1}\rfloor}\big(\Bar{B}^{(n)}\big)\Big|>n\varepsilon\Big)=0,
\end{align}
where we recall that $\bm{L}^{(n)}_{k}(q)=\sum_{i=1}^{q}\bm{L}^{(n,i)}_k$ with $\bm{L}^{(n,i)}_{k}=\bm{Z}^{(n,i)}_{k}+\bm{Z}^{(n,i)}_{k+1}$ and $\bm{Z}^{(n,i)}_k=\sum_{x\in\mathbfcal{R}^{(n)}_i}\un_{\{|x|_i=k\}}\bm{t}^{(n)}_x$.
Thanks to \eqref{ApproxReduced}, it is enough to show
\begin{align}\label{RestrictionToProcess}
    \lim_{n\to\infty}\P\left(\Big|\sum_{i=1}^{\CardRoots}\bm{L}^{(n,i)}_{\lfloor an^{\kappa-1}\rfloor-|e_i|}-\bm{L}^{(n)}_{\lfloor an^{\kappa-1}\rfloor}\big(\Bar{B}^{(n)}\big)\Big|>n\varepsilon\right)=0.
\end{align}
For that, we prove
\begin{align}\label{JointCVinLaw}
    \E\Big[e^{-\frac{\lambda_1}{n}\sum_{i=1}^{\CardRoots}\bm{L}^{(n,i)}_{\lfloor an^{\kappa-1}\rfloor-|e_i|}}e^{-\frac{\lambda_2}{n}\bm{L}^{(n)}_{\lfloor an^{\kappa-1}\rfloor}(\Bar{B}^{(n)})}\Big]\underset{n\to\infty}{\longrightarrow}\E\Big[e^{-2(\lambda_1+\lambda_2)W_{\infty}\mathcal{Y}^{(\kappa)}_{a/W_{\infty}}}\Big],
\end{align}
for any $\lambda_1,\lambda_2\geq 0$. Let $q_n:=p_{\lfloor an^{\kappa-1}\rfloor}$ and let $\mathcal{U}:=\bigcup_{j\geq 0}(\N)^j$, the set of finite $\N$-valued sequences, with the convention that $\N^0$ only contains the sequence with length $0$. 
For any $b\geq 0$, introduce $$\bm{\mathrm{C}}_{\kappa,b}:=(b(\kappa-1)C_{\infty}\bm{\mathrm{C}}_{\kappa})^{-1/(\kappa-1)},$$ and recall the definition of $\bm{\mathrm{C}}_{\kappa}$ in \eqref{DefCkappa}. By definition of the reduced range, we have
\begin{align*}
    \Big|\E\Big[e^{-\lambda_1q_n\sum_{i=1}^{\CardRoots}\bm{L}^{(n,i)}_{\lfloor an^{\kappa-1}\rfloor-|e_i|}}e^{-\lambda_2q_n\bm{L}^{(n)}_{\lfloor an^{\kappa-1}\rfloor}(\Bar{B}^{(n)})}\Big]-\Eb\Big[e^{-\bm{\mathrm{C}}_{\kappa,a}W_{\infty}\big(1-\phi^{(\kappa)}(2\lambda_1+2\lambda_2)\big)}\Big]\Big|
\end{align*}
is smaller than
\begin{align}\label{JointCVLaw}
    &\underset{\lfloor(\log n)^2\rfloor\leq|u(i)|\leq\lfloor(\log n)^3\rfloor}{\sum_{U=\{u(i);\;i\geq 1\}\subset\mathcal{U}}}\P\big(\Bar{\mathcal{B}}^{(n)}=U\big)\nonumber \\[0.7em] & \times\E\Bigg[\Bigg|\prod_{i=1}^{\Bar{B}^{(n)}}\E\left[e^{-\lambda_1q_nL^{(1)}_{\kappa_{i,n}}}e^{-\lambda_2q_nL^{(1)}_{\lfloor an^{\kappa-1}\rfloor}}\right]-\Eb\Big[e^{-\bm{\mathrm{C}}_{\kappa,a}W_{\infty}\big(1-\phi^{(\kappa)}(2\lambda_1+2\lambda_2)\big)}\Big]\Bigg|\Bigg]+o(1),
\end{align}
where $\kappa_{i,n}:=\lfloor an^{\kappa-1}\rfloor-|u(i)|$ and we have used that $$\lim_{n\to\infty}\P^{\mathcal{E}}\big(\forall\; x\in\Bar{\mathcal{B}}^{(n)},\; \lfloor(\log n)^2\rfloor\leq|x|\leq\lfloor(\log n)^3\rfloor,\; \Bar{\mathcal{B}}^{(n)}\subset\mathcal{B}^{(n)}\big)=1$$ $\Pb$-almost surely. 
Now, by Proposition \ref{UnifLawMultyMart}
\begin{align*}
    \lim_{n\to\infty}\E\Big[e^{-\lambda_1q_nL^{(1)}_{\kappa_{i,n}}}e^{-\lambda_2q_nL^{(1)}_{\lfloor an^{\kappa-1}\rfloor}}\Big]^{1/q_n}=e^{-\big(1-\phi^{(\kappa)}(2\lambda_1+2\lambda_2)\big)},
\end{align*}
uniformly in $i\geq 1$. By \eqref{ConvCardRoot}, we have $\Bar{B}^{(n)}/n\to W_{\infty}$ in $\P$-probability so Theorem \ref{Theorem1} gives $(q_n\Bar{B}^{(n)})$ converges to $\bm{\mathrm{C}}_{\kappa,a}W_{\infty}$. Hence, \eqref{JointCVLaw} yields
\begin{align*}
    \lim_{n\to\infty}\E\Big[e^{-\lambda_1q_n\sum_{i=1}^{\CardRoots}\bm{L}^{(n,i)}_{\lfloor an^{\kappa-1}\rfloor-|e_i|}}e^{-\lambda_2q_n\bm{L}^{(n)}_{\lfloor an^{\kappa-1}\rfloor}(\Bar{B}^{(n)})}\Big]=\Eb\Big[e^{-\bm{\mathrm{C}}_{\kappa,a}W_{\infty}\big(1-\phi^{(\kappa)}(2\lambda_1+2\lambda_2)\big)}\Big].
\end{align*}
To obtain \eqref{JointCVinLaw}, we are only left to check that 
\begin{align*}
    \E\Big[e^{-2(\lambda_1+\lambda_2)\bm{\mathrm{C}}_{\kappa,a}W_{\infty}\mathcal{Y}^{(\kappa)}_{a/W_{\infty}}}\Big]=\Eb\Big[e^{-\bm{\mathrm{C}}_{\kappa,a}W_{\infty}\big(1-\phi^{(\kappa)}(2\lambda_1+2\lambda_2)\big)}\Big].
\end{align*}
Indeed, by definition of $\mathcal{Y}^{(\kappa)}$ in \eqref{LaplaceCSBP}, we have, for any $\theta\geq 0$ and any $b\geq 0$
\begin{align*}
   \E^{\mathcal{E}}\Big[e^{-\theta\mathcal{Y}^{(\kappa)}_b}\Big]=e^{-\bm{\mathrm{C}}_{\kappa,b}\big(1-\phi^{(\kappa)}(\theta/\bm{\mathrm{C}}_{\kappa,b})\big)},
\end{align*}
so taking $b=a/(W_{\infty})^{\kappa-1}$ and $\theta=2(\lambda_1+\lambda_2)\bm{\mathrm{C}}_{\kappa,a}W_{\infty}$ gives $\bm{\mathrm{C}}_{\kappa,b}=\bm{\mathrm{C}}_{\kappa,a}W_{\infty}$ and $\phi^{(\kappa)}(\theta/\bm{\mathrm{C}}_{\kappa,b})=2(\lambda_1+\lambda_2)$ so the proof of \eqref{JointCVinLaw} is completed. We now deduce from \eqref{JointCVinLaw} that the sequence of random variables $(\frac{1}{n}\sum_{i=1}^{\CardRoots}\bm{L}^{(n,i)}_{\lfloor an^{\kappa-1}\rfloor-|e_i|}-\frac{1}{n}\bm{L}^{(n)}_{\lfloor an^{\kappa-1}\rfloor}(\Bar{B}^{(n)}))_{n\geq 1}$ converges in law under $\P$ to $0$, thus implying the convergence to $0$ in $\P$-probability (and also in $\P^*$-probability) and we finally get \eqref{RestrictionToProcess}. In view of \eqref{RestrictionToProcess}, we only have to prove that in $\Pb^*$-law
\begin{align}\label{ConvPlaw}
    \frac{1}{n}\bm{L}^{(n)}_{\lfloor an^{\kappa-1}\rfloor}\big(\Bar{B}^{(n)}\big)\underset{n\to\infty}{\longrightarrow}2W_{\infty}\mathcal{Y}^{(\kappa)}_{a/(W_{\infty})^{\kappa-1}}\;.
\end{align}
The next steps are devoted to the proof of \eqref{ConvPlaw}.

\vspace{0.2cm}

\noindent \textbf{Step 2}: convergence of the reduced local time under $\P$

\vspace{0.1cm}

\noindent This step is dedicated to the following: we show that under the annealed probability $\P$, in law for the Skorokhod topology on $D((0,\infty],\R)$
\begin{align}\label{ConvAnnealedProcess}
    \Big(\frac{1}{n}\bm{L}^{(n)}_{\lfloor an^{\kappa-1}\rfloor}(\tn);\; t\geq 0\Big)\underset{n\to\infty}{\longrightarrow}\big(2t\mathcal{Y}^{(\kappa)}_{a/t^{\kappa-1}};\; t\geq 0\big).
\end{align}
Recall that under $\P$, $\bm{L}^{(n)}_k(q)$ is a sum of $q$ independent copies of $L^{(1)}_k$. Therefore, the convergence of finite-dimensional marginals comes from \textbf{Step 1} with (by replacing $\Bar{B}^{(n)}$ with $\tn$ and then $W_{\infty}$ with $t$ and taking $\lambda_1=0$ in \eqref{JointCVinLaw}). Let us now check that, in law under $\P$, $2\mathcal{Y}^{(\kappa)}_{a}-2(1-\delta)\mathcal{Y}^{(\kappa)}_{a/(1-\delta)^{\kappa-1}}\to 0$ as $\delta\to0$. For any $\lambda_1,\lambda_2\geq 0$, we have, by \eqref{LaplaceCSBP}
\begin{align*}
    \E\Big[e^{-\lambda_1\mathcal{Y}^{(\kappa)}_{a}}e^{-\lambda_2(1-\delta)\mathcal{Y}^{(\kappa)}_{a/(1-\delta)^{\kappa-1}}}\Big]=\E\Big[e^{-C_{\delta,\lambda_1,\lambda_2}\mathcal{Y}^{(\kappa)}_a}\Big],
\end{align*}
where $C_{\delta,\lambda_1,\lambda_2}:=\lambda_1+\lambda_2(1+((1-\delta)^{1-\kappa}-1)(\lambda_1(1-\delta)/\bm{\mathrm{C}}_{\kappa,a})^{\kappa-1})^{-1/(\kappa-1)}$. Note that $\lim_{\delta\to0}C_{\delta,\lambda_1,\lambda_2}=\lambda_1+\lambda_2$ so
\begin{align*}
    \lim_{\delta\to 0}\E\Big[e^{-\lambda_1\mathcal{Y}^{(\kappa)}_{a}}e^{-\lambda_2(1-\delta)\mathcal{Y}^{(\kappa)}_{a/(1-\delta)^{\kappa-1}}}\Big]=\E\Big[e^{-(\lambda_1+\lambda_2)\mathcal{Y}^{(\kappa)}_a}\Big],
\end{align*}
and in particular, $2\mathcal{Y}^{(\kappa)}_{a}-2(1-\delta)\mathcal{Y}^{(\kappa)}_{a/(1-\delta)^{\kappa-1}}\to 0$ in law under $\P$ as $\delta\to\infty$. Finally, let $B>0$, $0<r<s<t$ and $n\geq 1$. We have
\begin{align*}
    \P\Big(\big|\bm{L}^{(n)}_{\lfloor an^{\kappa-1}\rfloor}(\sn)-\bm{L}^{(n)}_{\lfloor an^{\kappa-1}\rfloor}(\rn)\big|\land\big|\bm{L}^{(n)}_{\lfloor an^{\kappa-1}\rfloor}(\tn)-\bm{L}^{(n)}_{\lfloor an^{\kappa-1}\rfloor}(\sn)\big|\geq Bn\Big)\leq\frac{16}{B^2}(t-r)^2.
\end{align*}
Indeed
\begin{align*}
    &\P\Big(\big|\bm{L}^{(n)}_{\lfloor an^{\kappa-1}\rfloor}(\sn)-\bm{L}^{(n)}_{\lfloor an^{\kappa-1}\rfloor}(\rn)\big|\land\big|\bm{L}^{(n)}_{\lfloor an^{\kappa-1}\rfloor}(\tn)-\bm{L}^{(n)}_{\lfloor an^{\kappa-1}\rfloor}(\sn)\big|\geq Bn\Big) \\[0.7em] & =\P\Big(\bm{L}^{(n)}_{\lfloor an^{\kappa-1}\rfloor}(\sn-\rn)\geq Bn\Big)\P\Big(\bm{L}^{(n)}_{\lfloor an^{\kappa-1}\rfloor}(\tn-\sn)\big|\geq Bn\Big) \\[0.7em] & \leq\frac{4}{B^2}\frac{(\sn-\rn)(\tn-\sn)}{n^2}\leq\frac{4}{B^2}\frac{(\tn-\rn)^2}{n^2}\leq\frac{16}{B^2}(t-r)^2,
\end{align*}
where we have used the Markov inequality and the fact that $\E[L^{(p)}_k]=2p$ for the first inequality. We then obtain the convergence in law under $\P$ \eqref{ConvAnnealedProcess} by using Theorem 13.5 in \cite{BillingsleyBis}.

\vspace{0.2cm}

\noindent \textbf{Step 3}: convergence of the reduced local time under $\P^{\mathcal{E}}$

\vspace{0.1cm}

\noindent We show in this step that the convergence \eqref{ConvAnnealedProcess} actually holds under the quenched probability $\P^{\mathcal{E}}$ for $\Pb^*$-almost every environment $\mathcal{E}$. Precisely, using the same argument as in proof of Theorem 1.1 in \cite{AidRap}, one can show that for any continuous and bounded function $\Phi:D([0,\infty),\R)\to\R$, $\Pb^*$-almost surely
\begin{align*}
    \E^{\mathcal{E}}\Big[\Phi\Big(\big(\bm{L}^{(n)}_{\lfloor an^{\kappa-1}\rfloor}(\tn)/n;\; t\in[0,M]\big)\Big)\Big]-\E\Big[\Phi\Big(\big(\bm{L}^{(n)}_{\lfloor an^{\kappa-1}\rfloor}(\tn)/n;\; t\in[0,M]\big)\Big)\Big]\underset{n\to\infty}{\longrightarrow}0,
\end{align*}
for any $M>0$. Besides, we know thanks to \textbf{Step 2} that $$\lim_{n\to\infty}\E[\Phi((\bm{L}^{(n)}_{\lfloor an^{\kappa-1}\rfloor}(\tn)/n;\; t\in[0,M]))]=\E[\Phi((2t\mathcal{Y}^{(\kappa)}_{a/t^{\kappa-1}};\; t\in[0,M]))],$$ and in particular $(\E^{\mathcal{E}}[\Phi((\bm{L}^{(n)}_{\lfloor an^{\kappa-1}\rfloor}(\tn)/n;\; t\in[0,M]))])_{n\geq 1}$ converges to the same limit for $\Pb^*$-almost every environment. \\
Let us recall how to prove this. Using results of section \ref{SectionReducedProcesses}, one can see, at least along a sub-sequence, that $\lim_{n\to\infty}\P^{\mathcal{E}}( \forall\; |y|\geq\lfloor(\log n)^3\rfloor,\exists\; x\in\Bar{\mathcal{B}}^{(n)}:\; x\leq y,\; \Bar{\mathcal{B}}^{(n)}\subset\mathcal{B}^{(n)},\;\CardRoots\leq n^{3/2}\leq B^{(n)}\leq n^3,\; \forall x\in\mathcal{B}^{(n)}, \lfloor(\log n)^2\rfloor\leq|x|\leq\lfloor(\log n)^3\rfloor)=1$, $\Pb^*$-almost surely, so we restrict ourselves to this event. Note that we can always find a non-random number $\mathfrak{z}>0$ such that $\min_{|u|=\lfloor(\log n)^2\rfloor}V(u)>\mathfrak{z}(\log n)^2$ $\Pb^*$-almost surely. Indeed, since $\psi'(1)<0$ and $\psi(1)=0$, there exists $t_0>0$ such that $\frac{\psi(t_0)}{t_0}<0$. Fix $\mathfrak{z}=-\frac{\psi(t_0)}{2t_0}>0$. We have 
\begin{align*}
    \Pb\Big(\min_{|u|=\lfloor(\log n)^2\rfloor}V(u)>\mathfrak{z}(\log n)^2\Big)\leq\Eb\Big[\sum_{|u|=\lfloor(\log n)^2\rfloor}\un_{\{V(u)>\mathfrak{z}(\log n)^2\}}\Big]\leq e^{\lfloor(\log n)^2\rfloor\frac{\psi(t_0)}{2t_0}},
\end{align*}
and $\sum_{n\geq 1}\Pb(\min_{|u|=\lfloor(\log n)^2\rfloor}V(u)>\mathfrak{z}(\log n)^2)<\infty$. The Borel-Cantelli Lemma yields our claim. We also restrict ourselves to those vertices. However, in order to simplify the notations, we no longer refer to these restrictions in this step. Let $$\mathbfcal{U}_n:=\{(U_1,U_2);\;U_1\textrm{ and }U_2\textrm{ subsets of }\mathcal{U}_n\textrm{ with cardinal smaller than }n^{3}\},$$ where $\mathcal{U}_n:=\bigcup_{j\geq\lfloor(\log n)^2\rfloor}\N^j$, and define $\mathcal{A}_n$ to be the subset of $\mathbfcal{U}_n$ such that $(U_1,U_2)\in\mathcal{A}_n$ if and only if for any $(x,y)\in(U_1,U_2)$, neither $x$ is an ancestor of $y$, nor $y$ is an ancestor of $x$. Let $$\mathbfcal{X}^{(n)}_a:=(\bm{L}^{(n)}_{\lfloor an^{\kappa-1}\rfloor}(\tn)/n;\; t\in[0,M]).$$ The idea is to prove that $\E^{\mathcal{E}}[\Psi(\mathbfcal{X}^{(n)}_a)]$ concentrates around its mean. For that, one can see that
\begin{align*}
    \Eb\Big[\E^{\mathcal{E}}\big[\Psi(\mathbfcal{X}^{(n)}_a)\big]^2\Big]=\Big(\sum_{(U_1,U_2)\in\mathcal{A}_n}+\sum_{(U_1,U_2)\in\mathbfcal{U}_n\setminus\mathcal{A}_n}\Big)\Eb\Big[\E^{\mathcal{E}}\big[\Psi(\mathbfcal{X}^{(n)}_a)\un_{\{\mathcal{B}^{(n)}=U_1\}}\big]\E^{\mathcal{E}}\big[\Psi(\mathbfcal{X}^{(n)}_a)\un_{\{\mathcal{B}^{(n)}=U_2\}}\big]\Big].
\end{align*}
By definition of the reduced range
\begin{align*}
    &\sum_{(U_1,U_2)\in\mathcal{A}_n}\Eb\Big[\E^{\mathcal{E}}\big[\Psi(\mathbfcal{X}^{(n)}_a)\un_{\{\mathcal{B}^{(n)}=U_1\}}\big]\E^{\mathcal{E}}\big[\Psi(\mathbfcal{X}^{(n)}_a)\un_{\{\mathcal{B}^{(n)}=U_2\}}\big]\Big] \\ & =\sum_{(U_1,U_2)\in\mathcal{A}_n}\Eb\Big[\P^{\mathcal{E}}\big(\mathcal{B}^{(n)}=U_1\big)\P^{\mathcal{E}}\big(\mathcal{B}^{(n)}=U_2\big)\E^{\mathcal{E}}\big[\Psi(\mathbfcal{X}^{(n)}_a)\big|\mathcal{B}^{(n)}=U_1\big]\E^{\mathcal{E}}\big[\Psi(\mathbfcal{X}^{(n)}_a)\big|\mathcal{B}^{(n)}=U_2\big]\Big] \\ & = \sum_{(U_1,U_2)\in\mathcal{A}_n}\Eb\Big[\P^{\mathcal{E}}\big(\mathcal{B}^{(n)}=U_1\big)\P^{\mathcal{E}}\big(\mathcal{B}^{(n)}=U_2\big)\Big]\E\big[\Psi(\mathbfcal{X}^{(n)}_a)\big]^2\leq\E\big[\Psi(\mathbfcal{X}^{(n)}_a)\big]^2.
\end{align*}
For the second sum, we have
\begin{align*}
    &\sum_{(U_1,U_2)\in\mathbfcal{U}_n\setminus\mathcal{A}_n}\Eb\Big[\E^{\mathcal{E}}\big[\Psi(\mathbfcal{X}^{(n)}_a)\un_{\{\mathcal{B}^{(n)}=U_1\}}\big]\E^{\mathcal{E}}\big[\Psi(\mathbfcal{X}^{(n)}_a)\un_{\{\mathcal{B}^{(n)}=U_2\}}\big]\Big] \\ & \leq\|\Psi\|_{\infty}^2\sum_{(U_1,U_2)\in\mathbfcal{U}_n\setminus\mathcal{A}_n}\Eb\Big[\P^{\mathcal{E}}\big(\mathcal{B}^{(n)}=U_1\big)\P^{\mathcal{E}}\big(\mathcal{B}^{(n)}=U_2\big)\Big] \\ & \leq 2\|\Psi\|_{\infty}^2\sum_{(U_1,U_2)\in\mathbfcal{U}_n}\un_{\{\exists\;(x,y)\in U_1\times U_2:\; y\leq x\}}\Eb\Big[\P^{\mathcal{E}}\big(\mathcal{B}^{(n)}=U_1\big)\P^{\mathcal{E}}\big(\mathcal{B}^{(n)}=U_2\big)\Big] \\ & \leq 2\|\Psi\|_{\infty}^2\sum_{(U_1,U_2)\in\mathbfcal{U}_n}\;\sum_{x\in U_1}\Eb\Big[\P^{\mathcal{E}}\big(\mathcal{B}^{(n)}=U_1\big)\P^{\mathcal{E}}\big(\mathcal{B}^{(n)}=U_2,\;N_{x_n}^{(n^2)}\geq 1\big)\Big],
\end{align*}
where $x_n$ stands for the ancestor of $x$ in generation $\lfloor(\log n)^2\rfloor$ and we have used that $x_n\leq y$ and $N_y^{(n^2)}\geq 1$ which implies that $N_{x_n}^{(n^2)}\geq 1$. Note that $$\P^{\mathcal{E}}\big(N_{x_n}^{(n^2)}\geq 1\big)\leq n^2e^{-V(x_n)}\leq n^2e^{-\min_{|u|=\lfloor(\log n)^2\rfloor}V(u)}\leq n^2e^{-\mathfrak{z}\lfloor(\log n)^2\rfloor},$$ where we have used the Markov inequality and the fact that $\E^{\mathcal{E}}[N_{x_n}^{(n^2)}]=e^{-V(x_n)}$ (see below \eqref{ProbaAlpha}) for the first inequality, thus giving
\begin{align*}
    \sum_{(U_1,U_2)\in\mathbfcal{U}_n\setminus\mathcal{A}_n}\Eb\Big[\E^{\mathcal{E}}\big[\Psi(\mathbfcal{X}^{(n)}_a)\un_{\{\mathcal{B}^{(n)}=U_1\}}\big]\E^{\mathcal{E}}\big[\Psi(\mathbfcal{X}^{(n)}_a)\un_{\{\mathcal{B}^{(n)}=U_2\}}\big]\Big]\leq 2\|\Psi\|_{\infty}^2n^{5}e^{-\mathfrak{z}\lfloor(\log n)^2\rfloor}.
\end{align*}
Therefore, $\sum_{n\geq 1}\Pb(|\E^{\mathcal{E}}[\Psi(\mathbfcal{X}^{(n)}_a)]-\E[\Psi(\mathbfcal{X}^{(n)}_a)]|>\varepsilon)<\infty$ for any $\varepsilon>0$ and this concludes \textbf{Step 3}.

\vspace{0.2cm}

\noindent \textbf{Step 4}: convergence in $\Pb^*$-law of the local time

\vspace{0.1cm}

\noindent We have reached the final step of the proof and we are now ready to show \eqref{ConvPlaw}, that is, in $\Pb^*$-law
\begin{align*}
    \frac{1}{n}\bm{L}^{(n)}_{\lfloor an^{\kappa-1}\rfloor}\big(\Bar{B}^{(n)}\big)\underset{n\to\infty}{\longrightarrow} 2W_{\infty}\mathcal{Y}^{(\kappa)}_{a/(W_{\infty})^{\kappa-1}}\;.
\end{align*}
Let $\lambda\geq 0$. By \textbf{Step 3}, we have, for any $M>0$, $\Pb^*$-almost surely
\begin{align*}
    \E^{\mathcal{E}}\Big[\sup_{t\in[0,M]}e^{-\frac{\lambda}{n}\bm{L}^{(n)}_{\lfloor an^{\kappa-1}\rfloor}(\tn)}\Big]\underset{n\to\infty}{\longrightarrow}\E^{\mathcal{E}}\Big[\sup_{t\in[0,M]}e^{-\lambda 2t\mathcal{Y}^{(\kappa)}_{a/t^{\kappa-1}}}\Big],
\end{align*}
and we conclude by recalling that $\Bar{B}^{(n)}/n\to W_{\infty}$ in $\P^*$-probability, see \eqref{ConvCardRoot}, and this ends the proof of our theorem. \\

\noindent We end the article with the proof of our corollary.

\subsection{Proof of Corollary \ref{Corollary1}}\label{ProofCorollary}

Again, we only deal with the case $\kappa\in(1,2)$, the proof is the same when $\kappa\geq 2$. Corollary \ref{Corollary1} is a direct consequence of Theorem \ref{Theorem1} and Theorem \ref{Theorem3}. Indeed, on the one hand, in $\Pb^*$-probability
\begin{align*}
    \E^{\mathcal{E}}\Big[e^{-\frac{\lambda}{n}L^{(1)}_{\lfloor n^{\kappa-1}\rfloor}}\Big]^n=\E^{\mathcal{E}}\Big[e^{-\frac{\lambda}{n}L^{(n)}_{\lfloor n^{\kappa-1}\rfloor}}\Big]\underset{n\to\infty}{\longrightarrow}\E^{\mathcal{E}}\Big[e^{-2\lambda W_{\infty}\mathcal{Y}^{(\kappa)}_{1/(W_{\infty})^{\kappa-1}}}\Big]=e^{-\bm{\mathrm{C}}_{\kappa,1}W_{\infty}(1-\phi^{(\kappa)}(2\lambda/\bm{\mathrm{C}}_{\kappa,1}))},
\end{align*}
where we have used Theorem \ref{Theorem3} for this convergence and we recall that $\bm{\mathrm{C}}_{\kappa,b}=(b(\kappa-1)C_{\infty}\bm{\mathrm{C}}_{\kappa})^{-1/(\kappa-1)}$, see below equation \eqref{JointCVinLaw}. On the other hand
\begin{align*}
    \E^{\mathcal{E}}\Big[e^{-\frac{\lambda}{n}L^{(1)}_{\lfloor n^{\kappa-1}\rfloor}}\Big]=1-\P^{\mathcal{E}}\big(L^{(1)}_{\lfloor n^{\kappa-1}\rfloor}>0\big)\Big(1-\E^{\mathcal{E}}\Big[e^{-\frac{\lambda}{n}L^{(1)}_{\lfloor n^{\kappa-1}\rfloor}}\Big|L^{(1)}_{\lfloor n^{\kappa-1}\rfloor}>0\Big]\Big),
\end{align*}
so Theorem \ref{Theorem1} immediately gives, in $\Pb^*$-probability
\begin{align*}
    \lim_{n\to\infty}\E^{\mathcal{E}}\Big[e^{-\frac{\lambda}{n}L^{(1)}_{\lfloor n^{\kappa-1}\rfloor}}\Big|L^{(1)}_{\lfloor n^{\kappa-1}\rfloor}>0\Big]=\phi^{(\kappa)}\big(2\lambda/\bm{\mathrm{C}}_{\kappa,1}\big),
\end{align*}
which is exactly what we wanted.

\vspace{0.2cm}

\section{Table of important notation}

\subsection{Important notation related to the random walk $\X$}

$\tau^j=\inf\{k>\tau^{j-1};\; X_{k-1}=e^*,\; X_k=e\}$ for $j\geq 1$ and $\tau^0=0$, section \ref{SectionRange}.

\vspace{0.1cm}

\noindent $\VectCoord{N}{p}_x=\sum_{j=1}^{\tau^p}\un_{\{X_{j-1}=x^*,\; X_j=x\}}$, the edge local time of $(x^*,x)$ at time $\tau^p$ with $p\geq 1$ and $x\not=e^*$, section \ref{SectionRange}.

\vspace{0.1cm}

\noindent$\VectCoord{\mathcal{R}}{p}=\big\{x\in\T;\; \VectCoord{N}{p}_x\geq 1\big\}$, the range of $\X$ at time $\tau^p$, section \ref{SectionRange}.

\vspace{0.1cm}

\noindent$\VectCoord{L}{p}_k=\sum_{j=1}^{\tau^p}\un_{\{|X_j|=k\}}$, the local time of $\X$ at level $k$ and at time $\tau^p$, equation \eqref{DefLocalTimes}.

\vspace{0.1cm}

\noindent $Z^{(p)}_k=\sum_{|x|=k}N_x^{(p)}$, the  multi-type additive martingale, equation \eqref{DefMultiMart}.

\vspace{0.1cm}

\noindent $p^{\mathcal{E}}_k=\P^{\mathcal{E}}(Z^{(1)}_k>0)$ and $p_k=\Eb[p^{\mathcal{E}}_k]=\P(Z^{(1)}_k>0)$, right after equation \eqref{PhiKappa}.

\vspace{0.1cm}

\noindent $G^{\mathcal{E}}_{k}(s,p)=\E^{\mathcal{E}}[s^{Z^{(p)}_k}]$ and $G_{k}(s,p)=\Eb[G_k(s,p)]=\E[s^{Z^{(p)}_k}]$, beginning of section \ref{SectionPR}.

\vspace{0.1cm}

\noindent $G^{\mathcal{E}}_{k}(s)=G^{\mathcal{E}}_{k}(s,1)$ and $G_{k}(s)=G_{k}(s,1)$, beginning of section \ref{SectionPR}.

\subsection{Important notation related to the reduced range}

$\mathcal{B}^{(p)}_{\ell}=\big\{x\in\T,\; |x|>\ell;\; N_x^{(p)}=1\textrm{ and }\min_{\ell<i<|x|}N_{x_i}^{(p)}\geq 2\big\}$, the set of roots of the reduced forest, equation \eqref{SetRoots}.

\vspace{0.1cm}

\noindent$\mathcal{B}^{(n)}=\mathcal{B}^{(n^2)}_{\lfloor(\log n)^2\rfloor}$ and $\Bar{\mathcal{B}}^{(n)}=\mathcal{B}^{(n)}_{\lfloor(\log n)^2\rfloor}$, right after equation \eqref{ConvCardRoot}.

\vspace{0.1cm}

\noindent $B^{(n)}$ is the cardinal of $\mathcal{B}^{(n)}$ and $\Bar{B}^{(n)}$ the cardinal of $\Bar{\mathcal{B}}^{(n)}$, right after equation \eqref{ConvCardRoot}.

\vspace{0.1cm}

\noindent $\bm{t}_x^{(n)}=N_x^{(n^2)}$, after equation \eqref{ReducedRange}.

\vspace{0.1cm}

\noindent$\bm{Z}^{(n)}_{k}(q)=\sum_{i=1}^{q}\bm{Z}^{(n,i)}_k$, the reduced multi-type additive martingale with $\bm{Z}^{(n,i)}_k=\sum_{x\in\mathbfcal{R}^{(n)}_i}\un_{\{|x|_i=k\}}\bm{t}^{(n)}_x$, equation \eqref{RedAddMart}.

\vspace{0.1cm}

\noindent$\bm{L}^{(n)}_{k}(q)=\sum_{i=1}^{q}\bm{L}^{(n,i)}_k$, the reduced local time with $\bm{L}^{(n,i)}_{k}=\bm{Z}^{(n,i)}_{k}+\bm{Z}^{(n,i)}_{k+1}$, equation \eqref{RedLocalTime}.

\vspace{0.5cm}

\noindent\begin{merci}
    The author is partially supported for this work by the NZ Royal Society Te Apārangi Marsden Fund (22-UOA-052) entitled "Genealogies of samples of individuals selected at random from stochastic populations: probabilistic structure and applications". The author also benefited during the preparation of this paper from the support of Hua Loo-Keng Center for Mathematical Sciences (AMSS, Chinese Academy of Sciences) and the National Natural Science Foundation of China (No. 12288201). \\
    Finally, I would like to express my sincere thanks to an anonymous referee for her/his very careful reading of the paper, her/his relevant and precise remarks that were very useful to help improve the paper.
\end{merci}

\bibliographystyle{alpha}
\bibliography{thbiblio}

\end{document}